\numberwithin{equation}{section}
\newtheoremstyle{mytheorem}
{}
{}
{\it}
{\parindent}
{\bf}
{.}
{ }
{\thmnumber{#2.~}\thmname{#1}\thmnote{~\rm#3}}
\newtheoremstyle{myremark}
{}
{}
{\rm}
{\parindent}
{\bf}
{.}
{ }
{\thmnumber{#2.~}\thmname{#1}\thmnote{~\rm#3}}
\newtheoremstyle{myparagraph}
{}
{}
{\rm}
{\parindent}
{\bf}
{}
{ }
{\thmnumber{#2.~}\thmname{#1}\thmnote{#3}}
\theoremstyle{mytheorem}
\newtheorem{theorem}[subsubsection]{Theorem}
\newtheorem{lemma}[subsubsection]{Lemma}
\newtheorem{corollary}[subsubsection]{Corollary}
\newtheorem{proposition}[subsubsection]{Proposition}
\newtheorem{definition}[subsubsection]{Definition}
\theoremstyle{myremark}
\newtheorem{remark}[subsection]{Remark}
\newtheorem*{remark*}{Remark}
\theoremstyle{myparagraph}
\newtheorem*{parag*}{}
\def\@secnumfont{\sc}
\def\section{\@startsection{section}{1}%
\z@{1.5\linespacing\@plus .2\linespacing}{.7\linespacing}%
{\normalfont\sc\centering}}
\def\ps@headings{\ps@empty
 \def\@evenhead{%
  \setTrue{runhead}%
  \normalfont\footnotesize
  \rlap{\thepage}\hfil
  \def\thanks{\protect\thanks@warning}%
  \leftmark{}{}\hfil}%
 \def\@oddhead{%
  \setTrue{runhead}%
  \normalfont\footnotesize\hfil
  \def\thanks{\protect\thanks@warning}%
  \rightmark{}{}\hfil \llap{\thepage}}%
\let\@mkboth\markboth}
\renewenvironment{proof}[1][\proofname]{\par
  \pushQED{\qed}%
  \normalfont \topsep6\p@\@plus6\p@\relax
  \trivlist
  \itemindent\normalparindent
  \item[\hskip\labelsep
    \scshape
    #1\@addpunct{.}]\ignorespaces
}{%
  \popQED\endtrivlist\@endpefalse
}
\providecommand{\proofname}{Proof}
\newcommand{\grad}{\nabla}
\newcommand{\N}{\mathbb{N}}
\newcommand{\R}{\mathbb{R}}
\newcommand{\Q}{\mathbb{Q}}
\newcommand{\Z}{\mathbb{Z}}
\renewcommand{\div}{\mathrm{div}}
\DeclareMathOperator*{\argmin}{arg\,min}
\DeclareMathOperator{\dist}{dist}
\def\loc{{\mathrm{loc}}}
\newcommand{\Ha}{\ensuremath{\mathcal{H}}}
\newcommand{\vh}{v^{(h)}}
\newcommand{\Per}{\mathrm{Per}}
\renewcommand{\d}{\mathrm{d}}
\newcommand{\sdF}{\mathrm{sd}_{ F}}
\newcommand{\sd}{\mathrm{sd}}
\newcommand{\Fh}{\mathcal F_h}
\newcommand{\Eh}{E^{(h)}}
\newcommand{\Ehk}{E^{(h_k)}}
\newcommand{\chih}{\chi^{(h)}}
\newcommand{\lambdah}{\lambda^{(h)}}
\newcommand{\eps}{\varepsilon}
\newcommand{\la}{\langle}
\newcommand{\ra}{\rangle}
\newcommand{\BRhbeta}{B_{Rh^\beta}(x_0)}
\newcommand{\Ebetah}{E^{(h),\beta}}
\newcommand{\tacka}{\, \cdot\,}
\newcommand{\de}{\partial}
\newcommand\res{\mathop{\hbox{\vrule height 7pt width .5pt depth 0pt
\vrule height .5pt width 6pt depth 0pt}}\nolimits}
\newcommand\weaks{{\stackrel{*}{\rightharpoonup}}\,}
\newcommand{\bC}{{\mathbf {C}}}
\begin{document}

	%
\pagestyle{empty}
\pagestyle{myheadings}
\markboth%
{\underline{\centerline{\hfill\footnotesize%
\textsc{L. Mugnai, C. Seis \& E. Spadaro}%
\vphantom{,}\hfill}}}%
{\underline{\centerline{\hfill\footnotesize%
\textsc{Volume-preserving Mean-curvature Flow}%
\vphantom{,}\hfill}}}

	%
\thispagestyle{empty}

~\vskip -1.1 cm

	%

\vspace{1.7 cm}

	%
{\Large\sl\centering
Global solutions to the volume-preserving\\
mean-curvature flow
\\
}

\vspace{.4 cm}

	%
\centerline{\sc Luca Mugnai, Christian Seis \& Emanuele Spadaro}

\vspace{.8 cm}

{\rightskip 1 cm
\leftskip 1 cm
\parindent 0 pt
\footnotesize

	%
{\sc Abstract.}
In this paper, we construct global distributional solutions to the volume-preserving mean-curvature flow using a variant of the time-discrete gradient flow approach proposed independently by Almgren, Taylor \& Wang \cite{ATW} and Luckhaus \& Sturzenhecker \cite{LuckhausSturzenhecker95}.
\par
\medskip\noindent
{\sc Keywords:} Mean-curvature flow, volume preserving, volume constraint, global solutions, time discretization.
\par
\medskip\noindent
{\sc MSC (2010):} 53C44, 58E12, 65M06.

\par
}

%
%
\section{Introduction}
A family of open sets with smooth boundary
$\{ E_t\}_{0\leq t \leq T}$ in $\R^n$ is said to move
according to volume-preserving mean-curvature flow if the motion law,
expressed as an evolution equation for the boundaries $\partial E_t$, takes the form
\begin{equation}\label{eq:volMCF}
v\;=\; \la H\ra- H\quad\mbox{on }\partial E_t,
\end{equation}
for all $t\in[0,T]$. Here, at any point $x$ on $\partial E_t$, $v(x)$
denotes the velocity component normal to the boundary, in the direction of the outer
normal, $H(x)$ is the scalar mean curvature
(with the sign convention that $H$ is positive for balls, see the next section), and the brackets $\la\cdot\ra$
denote the average of a quantity over the boundary of $E_t$.

\medskip

It is immediately verified that the volume of the sets $E_t$
(i.e., its $n$-dimensional Lebesgue measure, denoted by $|E_t|$)
is indeed preserved under the smooth flow \eqref{eq:volMCF} because
\[
\frac{d}{dt} |E_t|\;=\; \int_{\partial E_t} v\, d\Ha^{n-1} \;\stackrel{\eqref{eq:volMCF}}{=}\; 0.
\]
And thus, upon rescaling variables, we may assume that $|E_t|=|E_0|=1$ for any $t\in[0,T]$. Moreover, the perimeter of 
the sets $E_t$ is decreasing because
\[
\frac{d}{dt}\Ha^{n-1}(\partial E_t)\;=\; \int_{\partial E_t} vH\, d\Ha^{n-1}\;\stackrel{\eqref{eq:volMCF}}{=}\; 
-\int_{\partial E_t} v^2\, d\Ha^{n-1}\;\le\;0.
\]
\medskip

During a typical evolution, a volume-preserving mean-curvature flow exhibits singularities of different kinds, even in the case of smooth initial data.
These singularities correspond to changes in the topology of the configuration
and include shrinkage of islands to points and disappearance, collisions
and merging of neighboring islands, pinch-offs etc\ldots\,
If the topology changes, the boundary of the evolving
set looses regularity and, as a consequence, the formulation \eqref{eq:volMCF} of the
evolution law is inadequate. 
The goal of the present work
is the construction of a notion of a weak solution to the volume-preserving mean-curvature flow that is global in time and thus overcomes these singular moments.

\medskip

Several solutions to volume-preserving mean-curvature 
flow have been proposed in the literature: existence and uniqueness of a global in time smooth solution and its 
convergence to a sphere is shown in \cite{Gage86,Huisken87} for smooth convex initial data and in 
\cite{EscherSimonett98,Li09} for initial data close to a sphere (for further related results see 
\cite{Andrews01,CR-M07,CR-M12} and the references therein).
In principle, these results also yield local in time existence and
uniqueness of smooth solutions. In \cite{PengMeriman} and \cite{RuuthWetton03}, the authors consider level-set and 
diffusion-generated solutions for the purpose of numerical studies.
In \cite{BCCN},
Bellettini, Caselles, Chambolle and Novaga construct solutions 
for the volume-preserving anisotropic mean-curvature (or anisotropic 
variant of (1.1)) for convex sets using a method similar to ours (that 
will be outlined below). In this paper, it is also shown that the 
solution (the so-called flat flow) is unique
and coincides with regular solutions when the latter are defined.
We also mention a mean field approximation approach to the volume preserving mean-curvature
flow as developed in \cite{BS, CHL, AA}.

\medskip

It is well-known that volume-preserving mean-curvature can be (formally) interpreted 
as the $L^2$-gradient flow of the perimeter functional for configurations with a fixed
volume, see, e.g., \cite[Sec.\ 2]{MS13}. This gradient flow structure, however, is for 
the purpose of well-posedness results impracticable, since the $L^2$ (geodesic) distance 
is degenerate in the sense that two well separated configurations may have zero 
$L^2$ distance \cite{MichorMumford06}. In the present manuscript, we follow the method 
proposed independently by Almgren, Taylor \& Wang \cite{ATW} and Luckhaus \& 
Sturzenhecker \cite{LuckhausSturzenhecker95} in the study of (forced) mean-curvature 
flows to bypass this difficulty. The authors consider an implicit time-discretization 
of the flow, which comes as a gradient flow of the perimeter functional with respect to 
a new non-degenerate distance function that approximates the $L^2$ distance.
The limiting time-continuous 
flow constructed with this method is usually referred to as the flat 
flow. The main 
difference between the present work and \cite{ATW,LuckhausSturzenhecker95} relies on 
the non-locality of the volume-preserving mean-curvature flow. As an immediate 
consequence, there is no maximum-principle available for \eqref{eq:volMCF}.
Also related to this aspect, there is the problem of proving the consistency of the scheme, i.e.~the
coincidence of the flat volume-preserving mean-curvature flow with the smooth one when the latter
exists. Under some assumption on the Lagrange multiplier of the flow,
the consistency can be inferred from the arguments in \cite{ATW}, but we do not know
if these conditions are generally satisfied and we do not discuss further the problem
of the consistency in this paper.
A more detailed discussion on the different features of the flows in \cite{ATW,LuckhausSturzenhecker95} 
and the one considered in the present manuscript will follow in 
Section~\ref{s:3} below.

\medskip

We conclude this subsection with a short discussion on the background of this 
evolution. Volume-preserving mean-curvature flow can be considered as a simplified
model for attachment-limited kinetics, and as such it plays an important role in
the study of solidification processes, where solid islands grow in an
undercooled liquid of the same substance. In such situations, solid particles
melt at high-curvature regions and simultaneously precipitate at
low-curvature regions, while the total mass of the solid remains essentially
constant \cite{Wagner61,CRCT95,Tarshis}. In this way, the total surface area
of solid islands is decreasing, and thus, this process leads to the growth of
larger islands at the expense of smaller ones: a phenomenon called coarsening
\cite{MS13}. In general, solidification processes are mathematically often
modelled by Mullins--Sekerka equations (or a Stefan problem), where the
Gibbs--Thompson relation is modified by a kinetic drift term \cite{LS61,Gurtin93},
and their phase-field counterparts respectively \cite{Caginalp89}. This model allows for\
both attachment kinetics (kinetic drift) and bulk diffusion (Mullins--Sekerka/Stefan). 
It turns out that attachment kinetics is the relevant mass transport mechanism in 
earlier stages of the evolution while bulk diffusion predominates the later stages 
\cite{DaiNiethammerPego}. In a certain sense, volume-preserving mean-curvature flow 
naturally arises as the singular limit of this more general solidification model in the 
regime of vanishing bulk diffusion. More recently, variants of volume-preserving mean 
curvature flow were also applied in the context of shape recovery in image processing 
\cite{CapuzzoFinziMarch02}.

\medskip

The article is organized as follows: in Section~\ref{s:2} we fix the notation and
state the main results of the paper, which are then proved
in Sections~\ref{s:3} and \ref{s:4} and are 
the existence of flat volume-preserving mean-curvature flows
and the existence of distributional solutions, respectively.

%
%
\section*{Acknowledgements}
C.S. acknowledges the kind hospitality of the Max-Planck-Institut f\"ur Mathematik in den Naturwissenschaften in 
Leipzig, Germany, where part of this work was performed.

%
%
\section{Statements of the main results}\label{s:2}
\subsection{Notation}
For any Lebesgue measurable set $E \subset \R^n$, we denote by $|E|$ the 
$n$-dimensional Lebesgue measure of $E$ and by $\chi_E$ the characteristic 
function of $E$, i.e.~$\chi_E(x) = 0$ if $x \notin E$ and $\chi_E(x) =1$ if $x \in E$.
The perimeter of $E$ in an open set $\Omega\subset\R^n$ is 
defined as
\[
\Per(E,\Omega):=\sup\left\{\int_E\div \varphi\, dx:\: \varphi \in 
C_c^1(\Omega;\R^n)\mbox{ with }\sup_{\Omega}|\varphi|\le1\right\},
\]
and we write $\Per(E):=\Per(E,\R^n)$. If the latter quantity is finite, we will
call $E$ a set of finite perimeter. In the case that $E$ is an open set with $\de E$
of class $C^1$, we simply have  $\Per(E,\Omega)=\Ha^{n-1}(\partial E\cap\Omega)$ 
and $\Per(E)=\Ha^{n-1}(\partial E)$. The reduced boundary of a set of finite perimeter 
$E$ is denoted by $\partial^* E$, cp.\ \cite[Sec.\ 5.7]{EvansGariepy}, and 
for the unit outer normal to $E$ we write $\nu_{E}$. The tangential divergence 
of a vector field $\Psi\in C^1(\R^n;\R^n)$ with respect to $\partial E$ is 
defined by $\div_{\partial E}\Psi := \div \Psi - \nu_E \cdot\grad\Psi\nu_E$. We 
say that a set of finite perimeter $E$ has a (generalized) mean-curvature $H_E\in 
L^1(\partial^* E,d\Ha^{n-1})$ provided that
\begin{equation}\label{e:generalized mean-curv}
\int_{\partial^*E} \div_{\partial E}\Psi\, d\Ha^{n-1} =  \int_{\partial^* E} 
\Psi\cdot \nu_E H_E\, d\Ha^{n-1}\quad\mbox{for all }\Psi\in C_c^{1}(\R^n;\R^n).
\end{equation}
Observe that with this sign convention it is $H_{B_R}\equiv\frac{n-1}R$.

\medskip

We write $\sd_{ F}$ for the signed distance from a Lebesgue measurable set $F$, with the 
convention that $\sd_{ F}$ is negative inside $F$, i.e.,
\[
 \sd_{ F}(x)\;=\;\begin{cases} \dist(x, F) &\mbox{for }x\in F^c,
 \\ -\dist(x,F^c)&\mbox{for }x\in F,
\end{cases}
\]
Here, $F^c := \R^n \setminus F$ denotes the complement set of $F$,
and the distance from a set $F$ is by definition
\[
\dist(x,F) = \inf_{y \in F} |x-y|.
\]
We will sometimes use the notation $\d_F=|\sd_F|$.

\medskip

By $[t]$ we denote the integer part of a real number $t$, that is the biggest 
integer $m$ such that $m\leq t$.

\medskip

Finally, we denote by $c_n, C_n$ positive constants that depend on the space dimension
only. Moreover, $c_{n,0}$, $C_{n,0}$, and $C_{n,0,T}$ are 
constants that may additionally depend on the initial data or the time $T>0$. During 
the computations, the value of these constants may change from line to line.
However, for the sake of clarity we need to keep track of the dimensional constant
in Proposition~\ref{p:Linfty}, therefore we make an exception to
the above convention and denote it by $\gamma_n$.
The volume of the $n$-dimensional unit ball will be denoted by $\omega_n$, and thus its surface 
area is $n\omega_n$.

\subsection{Approximate solutions}
In this paper we introduce a notion of global flat solution to the volume-preserving
mean-curvature flow which is based on the implicit time discretization of
\eqref{eq:volMCF} in the spirit of Almgren, Taylor \& Wang \cite{ATW}
and Luckhaus \& Sturzenhecker \cite{LuckhausSturzenhecker95}.
That is, we consider a time-discrete gradient flow for the perimeter functional.
For this purpose, we define
\[
\Fh(E,F):=\Per(E) + \frac1{h} \int_E \sd_{ F}\, dx + \frac{1}{\sqrt{h}}| |E|-1|,
\]
for any two sets of finite perimeter $E$ and $F$ in $\R^n$. Here, $h$ is a 
positive small number that plays the role of the time step of approximate solutions.
The second term in the above 
functional 
approximates the degenerate $L^2$ geodesic distance on the configuration space 
of hypersurfaces. The functional differs from the one considered in the original 
papers \cite{ATW,LuckhausSturzenhecker95} only in the last term: a weak 
penalization that favors unit-volume of minimizing sets.

\begin{definition}\label{D2}
Let $E_0$ be a set of finite perimeter with $|E_0|=1$, and $h>0$.
Let $\{\Eh_{kh}\}_{k\in\N}$ be a sequence of sets defined iteratively by  
\[
\Eh_0=E_0\quad\mbox{and}\quad \Eh_{kh}\in\argmin_{E \subset \R^n}
\left\{\Fh(E,\Eh_{(k-1)h})\right\}\quad\mbox{for }k\ge1.
\]
We furthermore define
\[
\Eh_t:=\Eh_{kh}\quad\mbox{for any }t\in[kh,(k+1)h),
\]
and call $\{\Eh_t\}_{t\geq 0}$ an \textup{approximate flat solution}
to the volume-preserving mean-curvature flow with initial datum $E_0$.
\end{definition}

The existence of minimizers $\Eh_{kh}$ and thus the existence of an approximate 
flat solution is guaranteed by Lemma \ref{L1} below. Incorporating the volume 
constraint in a soft way into the energy functional rather than imposing a hard 
constraint on the admissible sets has the advantage that we are free to chose 
arbitrary competitors,
most notably in the derivation of density estimates.
Thanks to the penalizing factor $1/\sqrt h$, the constraint becomes active in 
the limit $h\downarrow0$. Even more can be shown: the number of time steps in 
which approximate solutions violate the volume constraint $|\Eh_t|=1$ can be 
bounded uniformly in $h$, cf.\ Corollary \ref{prop:soft2hard} below. A similar 
functional including a soft volume constraint was recently considered by Goldman 
\& Novaga \cite{GoldmanNovaga12} in the study of a prescribed curvature problem.

\subsection{Main results}
We can now state our main results. 
The first one is a convergence result for approximate solutions.

\begin{theorem}[(Existence of flat flows)]\label{theo:conv-L1}
Let $E_0$ be a bounded set of finite perimeter with $|E_0|=1$ and, 
for any $h>0$, let $\{\Eh_t\}_{t \geq 0}$ be an approximate flat solution
to the volume-preserving mean-curvature flow with initial datum $E_0$.
Then, there exists a family of sets of finite perimeter
$\{E_t\}_{t\geq 0}$
and a subsequence $h_k\downarrow 0$
such that
\[
|E_t^{(h_k)} \Delta E_t| \to 0 \quad \text{for a.e. } t \in [0, +\infty),
\]
and, for every $0\le s\le t$, 
\begin{gather}
|E_t| = 1,\nonumber\\
\vert E_t\Delta E_s\vert\leq C_{n,0}\vert t-s\vert^{1/2},
\nonumber\\
\Per (E_t)\leq\Per(E_s).
\nonumber
\end{gather}
\end{theorem}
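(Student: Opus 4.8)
The plan is to derive all three asserted properties from uniform estimates on the discrete-in-time scheme, together with a compactness argument. First I would establish the fundamental energy inequality for the minimizing movement: testing the minimality of $\Eh_{kh}$ against the competitor $E=\Eh_{(k-1)h}$ gives
\[
\Per(\Eh_{kh}) + \frac{1}{h}\int_{\Eh_{kh}}\sd_{\Eh_{(k-1)h}}\,dx + \frac{1}{\sqrt h}\bigl||\Eh_{kh}|-1\bigr| \;\le\; \Per(\Eh_{(k-1)h}),
\]
since $\sd_{\Eh_{(k-1)h}}$ vanishes on $\partial\Eh_{(k-1)h}$ and is non-positive on $\Eh_{(k-1)h}$, while $|E_0|=1$. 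Because $\sd_{\Eh_{(k-1)h}}\le -c\,\dist(\cdot,\partial\Eh_{(k-1)h})$ on the symmetric difference, the middle term controls (from below) a quantity comparable to $\tfrac{1}{h}|\Eh_{kh}\Delta\Eh_{(k-1)h}|^2/\mathrm{diam}$-type factors; more precisely one gets the standard bound $\frac{1}{h}\bigl(\int_{\Eh_{kh}\Delta\Eh_{(k-1)h}}\dist(x,\partial\Eh_{(k-1)h})\,dx\bigr)$ from below, which by a Cauchy–Schwarz/isoperimetric-type argument dominates $c\,|\Eh_{kh}\Delta\Eh_{(k-1)h}|^2/h$ once uniform perimeter and diameter bounds are in hand. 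Summing over $k$ telescopes the right-hand side to $\Per(E_0)$, yielding (a) monotonicity of $k\mapsto\Per(\Eh_{kh})$, hence $\Per(\Eh_t)\le\Per(E_0)$ for all $t$; (b) $\sum_k\frac1h|\Eh_{kh}\Delta\Eh_{(k-1)h}|^2\le C_{n,0}$; and (c) $\sum_k\frac{1}{\sqrt h}\bigl||\Eh_{kh}|-1\bigr|\le C_{n,0}$.

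Next I would upgrade (b) to the $1/2$-Hölder estimate in $L^1$. For $0\le s\le t$ with $s\in[mh,(m+1)h)$, $t\in[Mh,(M+1)h)$, the triangle inequality for the $L^1$ (symmetric-difference) metric and Cauchy–Schwarz give
\[
|\Eh_t\Delta\Eh_s| \le \sum_{k=m+1}^{M}|\Eh_{kh}\Delta\Eh_{(k-1)h}| \le \Bigl(\sum_{k=m+1}^{M}\frac1h|\Eh_{kh}\Delta\Eh_{(k-1)h}|^2\Bigr)^{1/2}\bigl((M-m)h\bigr)^{1/2} \le C_{n,0}\,|t-s|^{1/2}+C_{n,0}\,h^{1/2}.
\]
(The extra $h^{1/2}$ comes from rounding $s,t$ to the grid and disappears in the limit.) This uniform equicontinuity estimate, combined with the uniform perimeter bound $\Per(\Eh_t)\le\Per(E_0)$ and a uniform bound on $\mathrm{diam}(\Eh_t)$ — which follows from the density estimates obtainable from minimality, keeping the sets from spreading to infinity — puts the curves $t\mapsto\chi_{\Eh_t}$ in a compact subset of, say, $L^1_{\loc}([0,\infty);L^1(\R^n))$. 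A Helly/Aubin–Lions-type argument (or simply a diagonal argument over a countable dense set of times using compactness of $BV$ in $L^1$, then extending by the equicontinuity estimate) extracts a subsequence $h_k\downarrow0$ and a limit family $\{E_t\}$ with $|\Eh_{t}^{(h_k)}\Delta E_t|\to0$ for a.e.\ $t$; passing to the limit in the Hölder bound gives $|E_t\Delta E_s|\le C_{n,0}|t-s|^{1/2}$, and lower semicontinuity of the perimeter under $L^1$ convergence gives $\Per(E_t)\le\liminf_k\Per(\Eh_t^{(h_k)})\le\liminf_k\Per(\Eh_s^{(h_k)})$ for $s\le t$ (along times where convergence holds), hence $\Per(E_t)\le\Per(E_s)$; by the Hölder continuity this extends to all $s\le t$. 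Finally, $|E_t|=1$ follows from (c): the set of time steps where $\bigl||\Eh_{kh}|-1\bigr|\ge h^{1/4}$ has total length $O(h^{1/4})\to0$, so $|\Eh_t^{(h_k)}|\to1$ for a.e.\ $t$, and $|\cdot|$ passes to the limit under $L^1$ convergence.

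The main obstacle I anticipate is not the energy inequality or the compactness per se, but securing the \emph{uniform diameter (and lower density) bounds} that make the compactness genuine rather than illusory — i.e.\ preventing mass from escaping to infinity and preventing degeneracy — because the volume constraint here is only soft and the flow is non-local, so one cannot appeal to any maximum principle as in \cite{ATW}. One has to exploit minimality directly: comparing $\Eh_{kh}$ with $\Eh_{kh}\cap B_R(x_0)$ (and with $\Eh_{kh}\cup B_\rho(x_0)$) and using the isoperimetric inequality together with the smallness of the $\frac1h\int\sd$ term gives density estimates and a uniform bound on the diameter depending only on $n$, $\Per(E_0)$ and $|E_0|=1$; this is exactly the place where the soft volume penalization is convenient, since it lets us take such competitors freely. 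I would expect this step to require the most care, and it is presumably packaged in Lemma~\ref{L1} and the density-estimate lemmas referenced in the text. Everything else is then a routine assembly of the telescoping energy inequality, Cauchy–Schwarz, $BV$ compactness, and lower semicontinuity.
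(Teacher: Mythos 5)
Your overall architecture (one-step dissipation inequality, control of $|\Eh_{kh}\Delta\Eh_{(k-1)h}|$ by the dissipation, Cauchy--Schwarz over the time steps, H\"older continuity, compactness and lower semicontinuity) matches the paper's, but the two steps you treat as routine are exactly where the work lies, and in both cases the mechanism you propose does not suffice. The inequality $|E\Delta F|^2\le C\,\Per(E)\int_{E\Delta F}\d_{F}\,dx$ is \emph{not} a consequence of Cauchy--Schwarz plus uniform perimeter and diameter bounds: for general sets of finite perimeter the measure of $\{x\in E\Delta F:\ \d_{F}(x)\le \ell\}$ is not controlled by $\ell\,\Per$ (Minkowski content is not controlled by perimeter). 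The paper's Proposition~\ref{p:L1} proves this estimate only after establishing the $L^\infty$ bound $\sup_{E\Delta F}\d_{F}\le\gamma_n\sqrt h$ (Proposition~\ref{p:Linfty}) and the volume/area density estimates for the minimizer at all scales $r\le\gamma_n\sqrt h$ (Corollary~\ref{c:density}, via the one-sided Lemma~\ref{l.one side}), and then running a covering argument with balls centered on $\partial^*E$. You do invoke density estimates, but you locate them in the wrong place (the diameter bound) and omit the covering argument, which is the actual bridge from the dissipation to the $L^1$ modulus. A smaller repairable slip: your one-step inequality drops $h^{-1/2}\big||\Eh_{(k-1)h}|-1\big|$ from the right-hand side, so neither the monotonicity of $k\mapsto\Per(\Eh_{kh})$ nor your summability claim (c) follows as stated; the correct telescoping yields monotonicity of $\Per(\Eh_{kh})+h^{-1/2}\big||\Eh_{kh}|-1\big|$, hence $\Per(\Eh_t)\le\Per(E_0)$ and the stronger uniform bound $\big||\Eh_t|-1\big|\le\sqrt h\,\Per(E_0)$, which is what gives $|E_t|=1$ in the limit.

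More seriously, the uniform boundedness of $\Eh_t$ on $[0,T]$ cannot be obtained by "comparing with $\Eh_{kh}\cap B_R$ and the isoperimetric inequality", and it is certainly not independent of $T$. Density estimates prevent the sets from being spread thin at a fixed time, but they do not prevent the configuration from drifting: the per-step displacement bound is only $\gamma_n\sqrt h$, which over $T/h$ steps gives the useless bound $T/\sqrt h$. This is precisely where the nonlocality of the volume-preserving flow enters. The paper's Lemma~\ref{lem:ESP} controls the outer radius $r_t$ via the Euler--Lagrange identity $\vh=\lambdah-H_{\Eh_t}$ at the farthest boundary point (where $H_{\Eh_t}\ge0$), giving $r_t\le r_{t-h}+h|\lambdah(t)|$; it then bounds $\int_0^\tau|\lambdah|\,dt$ by testing the first variation with $\Psi(x)=x$ together with the $L^2$ velocity bound of Lemma~\ref{Lem:vel-L^2}, and closes with a Gronwall argument, producing a radius $R_T$ growing with $T$. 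Without a substitute for this Lagrange-multiplier estimate, mass may escape to infinity along the subsequence, and both $|E_t|=1$ and the $L^1$ (rather than merely $L^1_{\loc}$) convergence remain unjustified.
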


\medskip

Our next statement is the existence of a distributional solution
in the sense of Luckhaus \& Sturzenhecker \cite{LuckhausSturzenhecker95}
to the volume-preserving mean-curvature flow under the hypothesis
that the perimeters of the approximate solutions converge to the perimeter of 
the limiting solutions identified in the previous theorem.

\begin{theorem}[(Existence of distributional solutions)]\label{theo:frenata}
Suppose that $n\leq 7$. Let $(h_k)_{k\in \N}$ and $\{E_t\}_{t\geq 0}$ be as
in Theorem~\ref{theo:conv-L1}. For any $T>0$, if
\begin{equation}\label{eq:ass-LS}
\lim_{k\to\infty}\int_0^T\Per(\Ehk_t)\,dt=\int_0^T\Per(E_t)\,dt,
\end{equation}
then $\{E_t\}_{0 \leq t<T}$ is a distributional solution to the volume-preserving 
mean-curvature flow with initial datum $E_0$ in the following sense:
\begin{enumerate}
\item for almost every $t\in [0,T)$ the set $E_t$ has (generalized)
mean curvature in the sense of \eqref{e:generalized mean-curv}
satisfying
\begin{equation}\label{e:gen mean-curv}
\int_0^T\hspace{-0.2cm} \int_{\de^* E_t}|H_{E_t}|^2 < +\infty;
\end{equation}

\item there exists $v:\R^n \times (0,T) \to \R$
with $v(\cdot, t)\vert_{\de^* E_t} \in L^2_0( \partial^*E_t,d\Ha^{n-1})$
for a.e.~$t \in (0,T)$
and
$\int_0^T \int_{\de^* E_t} v^2 d\Ha^{n-1}dt <+\infty$
such that
\begin{gather}
-\int_0^T\hspace{-0.2cm}\int_{\partial^* E_t}v\,\phi\,d\Ha^{n-1}dt =
\int_0^T\hspace{-0.2cm}\int_{\partial^* E_t}\left(H_{E_t}\,\phi-\lambda\,\phi\right)d\Ha^{n-1}dt,\label{e:volMCF weak1}\\
\int_0^T\hspace{-0.2cm}\int_{E_t}\partial_t\phi\,dxdt+\int_{E_0}\phi(0,\tacka)\,dx=
-\int_0^T\hspace{-0.2cm}\int_{\partial^* E_t}v\,\phi\,d\Ha^{n-1}dt,\label{e:volMCF weak2}
\end{gather}
for every $\phi\in C_c^1([0,\infty)\times\R^n)$, where 
\begin{equation}\label{e:lagrange multip}
\lambda(t):=\frac{1}{\Ha^{n-1}(\partial^*E_t)} \int_{\partial^* E_t} H_{E_t}\,d\Ha^{n-1}
\quad\text{for a.e.~$t\in [0,T)$.}
\end{equation}
\end{enumerate}
\end{theorem}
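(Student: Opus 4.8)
The plan is to pass to the limit in the Euler--Lagrange equations satisfied by the discrete minimizers, exploiting the extra regularity granted by the assumption $n\le 7$ and the crucial hypothesis \eqref{eq:ass-LS}. First I would recall that each $\Ehk_{kh}$ is a (volume-penalized) quasi-minimizer of the perimeter: by comparing $\Ehk_{kh}$ with competitors obtained by compactly supported diffeomorphisms $\Phi_\eps=\Id+\eps X$, $X\in C_c^1(\R^n;\R^n)$, the first variation of $\Fh(\cdot,\Ehk_{(k-1)h})$ vanishes, which yields the discrete motion law
\[
\int_{\de^*\Ehk_{kh}}\div_{\de\Ehk_{kh}}X\,d\Ha^{n-1}
= \int_{\de^*\Ehk_{kh}} \Big(\frac{\sdht}{h_k}\,X\cdot\nu + \lambdahk_{kh}\,X\cdot\nu\Big)\,d\Ha^{n-1},
\]
where $\lambdahk_{kh}$ is the (discrete) Lagrange multiplier coming from differentiating the volume penalization; in particular $\Ehk_{kh}$ has generalized mean curvature $H_{\Ehk_{kh}} = -\sdht/h_k - \lambdahk_{kh}$ on $\de^*\Ehk_{kh}$. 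Next I would establish the key a priori estimate: testing the minimality $\Fh(\Ehk_{kh},\Ehk_{(k-1)h})\le \Fh(\Ehk_{(k-1)h},\Ehk_{(k-1)h})$ and summing a telescoping argument over $k$ gives, for the piecewise-constant interpolant, a uniform bound
\[
\sum_k \frac1{h_k}\Big(\int_{\Ehk_{kh}}\d_{\Ehk_{(k-1)h}}\,dx\Big) + \sup_k \Per(\Ehk_{kh}) \le C_{n,0},
\]
which, via the elementary inequality relating $\int_E \sd_F\,dx$ to an $L^2$-type distance between $\de E$ and $\de F$, translates into an $L^2$-in-time bound on a discrete velocity $v^{(h_k)}$ and hence on $-\sdht/h_k$ restricted to $\de^*\Ehk_{kh}$. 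Combined with the perimeter bound this controls $\int_0^T\int_{\de^*\Ehk_t}|H_{\Ehk_t}|^2$ uniformly, since $\lambdahk$ is the average of $H_{\Ehk_t}$ up to the small volume-error term that the $1/\sqrt{h_k}$ penalization forces to zero (using Corollary~\ref{prop:soft2hard} to control the bad time steps).

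With these uniform bounds in hand, the second block of the argument is the compactness/lower-semicontinuity step. From Theorem~\ref{theo:conv-L1} we already have $\chi_{\Ehk_t}\to\chi_{E_t}$ in $L^1$ for a.e.\ $t$ and $|E_t|=1$. The $L^2$ bound on the discrete velocities gives, after extracting a further subsequence, a weak limit $v\in L^2((0,T)\times\de^*E_t)$ in the appropriate sense (as a limit of the measures $v^{(h_k)}\Ha^{n-1}\res\de^*\Ehk_t$ against $C^1_c$ test functions), and this $v$ has zero average on each $\de^*E_t$ because each $v^{(h_k)}$ does up to the vanishing volume error. The measures $H_{\Ehk_t}\nu_{\Ehk_t}\Ha^{n-1}\res\de^*\Ehk_t$ are bounded in total variation (by Cauchy--Schwarz, using the $L^2$ mean-curvature bound and the perimeter bound), so they converge weakly-$*$ to some vector measure $\mu$. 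Here is where assumption \eqref{eq:ass-LS} enters decisively: without it, the limiting perimeter could be strictly smaller than $\liminf\Per(\Ehk_t)$ and mass could be lost in the limit of the curvature measures; \emph{with} it, the perimeters converge, hence $\Ha^{n-1}\res\de^*\Ehk_t\to\Ha^{n-1}\res\de^*E_t$ as measures, and a standard argument (as in Luckhaus--Sturzenhecker, using that the limit set itself has a well-defined generalized mean curvature via lower semicontinuity of the $L^2$-norm of curvature under this convergence) identifies $\mu = H_{E_t}\nu_{E_t}\Ha^{n-1}\res\de^*E_t$ and shows $\int_0^T\int_{\de^*E_t}|H_{E_t}|^2<\infty$, giving (i).

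Finally I would assemble the two weak equations. Passing to the limit in the discrete first variation, written in the integrated-in-time form $-\int_0^T\int v^{(h_k)}\phi = \int_0^T\int (H_{\Ehk_t}\phi - \lambdahk\phi)$ for $\phi\in C^1_c([0,\infty)\times\R^n)$: the left side converges by weak convergence of the velocity measures together with strong convergence of the boundary measures; on the right side the curvature term converges by weak-$*$ convergence of the curvature measures, and $\lambdahk(t)\to\lambda(t)$ with the identification \eqref{e:lagrange multip}, because $\lambdahk$ is the boundary-average of $H_{\Ehk_t}$ up to the $1/\sqrt{h_k}$-controlled error and both the numerator and the perimeter denominator pass to the limit. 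This yields \eqref{e:volMCF weak1}. For the transport equation \eqref{e:volMCF weak2}, one notes that the discrete scheme satisfies, for each step, $\int_{\Ehk_{kh}}\phi - \int_{\Ehk_{(k-1)h}}\phi \approx \int_{(k-1)h}^{kh}\int_{\de^*\Ehk_t} v^{(h_k)}\phi$ up to an error controlled by the discrete dissipation times $\|\partial_t\phi\|_\infty$; summing over $k$, integrating by parts in time, and passing to the limit using $\chi_{\Ehk_t}\to\chi_{E_t}$ in $L^1((0,T)\times\R^n)$ and the convergence of the velocity measures gives \eqref{e:volMCF weak2} with the stated initial condition. The main obstacle throughout is the identification of the limiting curvature measure with the curvature of the limit set and the verification that no mass escapes; this is exactly the point where \eqref{eq:ass-LS} is indispensable, and where the dimensional restriction $n\le 7$ is used, since it guarantees via regularity theory for the (quasi-)minimizers $\Ehk_{kh}$ that their reduced boundaries are smooth, so that the density estimates and the convergence of the surface measures can be carried out cleanly.
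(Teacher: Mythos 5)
Your first two blocks track the paper's argument faithfully: the discrete Euler--Lagrange equations with the Lagrange multiplier, the dissipation-based $L^2$ bounds on the discrete velocity, curvature and multiplier, and the use of \eqref{eq:ass-LS} to upgrade $L^1$ convergence of the sets to convergence of the surface measures (hence varifold convergence), after which Hutchinson-type compactness identifies the limits $v$, $H$, $\lambda$ and yields (i) and \eqref{e:volMCF weak1}. That part is sound and is essentially the paper's route.

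The genuine gap is in your treatment of \eqref{e:volMCF weak2}. You assert that
\[
\int_{\Eh_{kh}}\phi\,dx-\int_{\Eh_{(k-1)h}}\phi\,dx \;\approx\; h\int_{\de^*\Eh_{kh}}\vh\,\phi\,d\Ha^{n-1}
\]
``up to an error controlled by the discrete dissipation times $\|\partial_t\phi\|_\infty$.'' This is not an error one can read off from the scheme: the left side is a bulk integral over $\Eh_{kh}\Delta\Eh_{(k-1)h}$ while the right side is a surface integral of the signed distance over $\de\Eh_{kh}$, and equating them requires knowing that, locally, the symmetric difference is a slab swept between two nearly parallel graphs whose thickness is measured by $\sd_{\Eh_{(k-1)h}}$. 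This is precisely the content of Proposition~\ref{p:time derivative} in the paper, whose proof occupies the technical core of the argument: one must (a) show, by a blow-up at scale $h^\beta$ and the $\Lambda$-minimality of the rescaled sets, that at points where the discrete velocity is below $h^{\alpha-1}$ both boundaries are graphs with tilt $\omega(h)\to 0$ over a common plane (Lemma~\ref{lem:rosticini} and Corollary~\ref{c:volume} — this is where the Bernstein theorem, and hence the restriction $n\le 7$, actually enters, not in the density estimates or the measure convergence as you suggest); and (b) show via the density estimates that the points where the velocity exceeds $h^{\alpha-1}$ contribute an error of order $h^{n/2-(n+1)\alpha+1}\to 0$ after a Besicovitch covering. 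The resulting error is of the form $C\big(\omega(h)\|\phi\|_\infty+h^\beta\|\nabla\phi\|_\infty+h^{n/2-(n+1)\alpha+1}\|\phi\|_\infty\big)\big(|\Eh_t\Delta\Eh_{t-h}|+\int_{\Eh_t\Delta\Eh_{t-h}}|\vh|\big)$, which is then integrable in time by the dissipation bounds — quite different from, and much harder to obtain than, the bound you propose. Without this step the link between the weak velocity $v$ and the time derivative of $\chi_{E_t}$ is unproved, so \eqref{e:volMCF weak2} does not follow.
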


In the second part of the theorem, $L^2_0$ is the set of all $L^2$ functions with zero mean.

Note that \eqref{e:volMCF weak1} is a weak formulation of \eqref{eq:volMCF},
while \eqref{e:volMCF weak2} establishes the link between $v$ and the velocity
of the boundaries of $E_t$.
It is straightforward to check that smooth solutions of \eqref{eq:volMCF} 
satisfy \eqref{e:volMCF weak1} and \eqref{e:volMCF weak2}.

The restriction on the dimension $n\leq 7$ is technical and is needed
in the proof of Corollary~\ref{lem:rosticini} where the Bernstein theorem
for minimal surfaces is exploited.

%
%
%

%
%
\section{Flat volume-preserving mean-curvature flows}\label{s:3}
In this section we prove the first main result in Theorem~\ref{theo:conv-L1}.
We follow quite closely
Luckhaus \& Sturzenhecker \cite{LuckhausSturzenhecker95}, providing all
the details for the readers' convenience.

\subsection{Existence of approximate solutions}
We start remarking that
\begin{equation}\label{1}
 \int_E\sd_{ F}\, dx = \int_{E\Delta F}\d_{ F}\, dx - \int_{F} \d_{ F}\, 
dx.
\end{equation}

The existence of the approximate flat solutions is guaranteed by the following 
lemma.
\begin{lemma}\label{L1}
Let $F \subset \R^n$ be a bounded set of finite perimeter. For every $h>0$, there 
exists a minimizer $E$ of $\Fh(\tacka, F)$ and, moreover, $E$ satisfies
the discrete dissipation inequality
\begin{equation}\label{2}
\Per(E) + \frac1h \int_{E\Delta F}\d_{ F}\, dx + \frac{1}{\sqrt{h}}\left||E|-1\right|
\;\le\; \Per(F) + \frac{1}{\sqrt{h}}\left||F|-1\right|.
\end{equation}
\end{lemma}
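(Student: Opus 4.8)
The plan is to obtain the minimizer by the direct method in the calculus of variations, and then to derive \eqref{2} as an immediate consequence of minimality. First I would fix $h>0$ and observe that $\Fh(\cdot,F)$ is bounded below by $0$, so the infimum over sets of finite perimeter is a finite nonnegative number $m_h$; moreover $m_h \le \Fh(F,F) = \Per(F) + \frac1h\int_F \sd_F + \frac1{\sqrt h}||F|-1|$, which is finite since $F$ is a bounded set of finite perimeter (note $\sd_F \le 0$ on $F$, so this is actually $\le \Per(F) + \frac1{\sqrt h}||F|-1|$). Take a minimizing sequence $(E_j)$. From $\Fh(E_j,F) \le m_h + 1$ we read off that $\Per(E_j)$ is uniformly bounded; using the identity \eqref{1}, namely $\int_{E_j}\sd_F\,dx = \int_{E_j\Delta F}\d_F\,dx - \int_F \d_F\,dx$, together with $\d_F \ge 0$, we also get that $\frac1h\int_{E_j\Delta F}\d_F\,dx$ and $\frac1{\sqrt h}||E_j|-1|$ are uniformly bounded; in particular $|E_j|$ is bounded.

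The main obstacle — the one genuinely nontrivial point — is compactness: a uniform perimeter and volume bound does not by itself prevent mass escaping to infinity, so I need the distance term to confine the $E_j$. The bound on $\frac1h\int_{E_j\Delta F}\d_F\,dx$ does exactly this: since $F$ is bounded, say $F\subset B_\rho$, for $x\notin B_{\rho+1}$ we have $\d_F(x)\ge 1$, hence $|E_j\setminus B_{\rho+1}| \le \int_{E_j\Delta F}\d_F\,dx \le h(m_h+1)$, which is a finite constant independent of $j$. More precisely, $|E_j\setminus B_R| \le \frac1{R-\rho}\int_{E_j\Delta F}\d_F\,dx$ for $R>\rho$, so the mass of $E_j$ outside large balls is uniformly small: the sequence $(\chi_{E_j})$ is tight. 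Combined with the uniform perimeter bound, the compactness theorem for sets of finite perimeter (BV compactness, e.g.\ \cite[Sec.~5.2]{EvansGariepy}) gives a subsequence and a set of finite perimeter $E$ with $\chi_{E_j}\to\chi_E$ in $L^1_{\loc}(\R^n)$, and by tightness in fact in $L^1(\R^n)$, so $|E_j\Delta E|\to 0$ and $|E_j|\to|E|$.

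It remains to check that $E$ is a minimizer, i.e.\ $\Fh(E,F)\le \liminf_j \Fh(E_j,F) = m_h$. The perimeter is lower semicontinuous under $L^1_{\loc}$ convergence, so $\Per(E)\le\liminf_j\Per(E_j)$. The volume term $\frac1{\sqrt h}||E|-1|$ passes to the limit by continuity since $|E_j|\to|E|$. For the distance term, $\sd_F$ is a fixed bounded function on any bounded set and $|\sd_F(x)|\le |x|+\rho$ globally, so $\sd_F\in L^1$ against the tight family $\chi_{E_j}$; splitting $\R^n = B_R \cup B_R^c$, on $B_R$ one uses $L^1$ convergence and boundedness of $\sd_F$ there, and on $B_R^c$ one uses the uniform smallness of $|E_j\setminus B_R|$ and $|E\setminus B_R|$ together with the linear growth of $\sd_F$ — letting $R\to\infty$ after $j\to\infty$ shows $\int_{E_j}\sd_F\,dx\to\int_E\sd_F\,dx$. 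Hence all three terms behave correctly and $E$ is a minimizer. Finally, \eqref{2} follows by comparing $E$ with the competitor $F$ itself: $\Fh(E,F)\le\Fh(F,F)$, i.e.\ $\Per(E) + \frac1h\int_E\sd_F\,dx + \frac1{\sqrt h}||E|-1| \le \Per(F) + \frac1h\int_F\sd_F\,dx + \frac1{\sqrt h}||F|-1|$; substituting \eqref{1} on both sides, the common term $-\frac1h\int_F\d_F\,dx$ cancels, leaving exactly the claimed discrete dissipation inequality $\Per(E) + \frac1h\int_{E\Delta F}\d_F\,dx + \frac1{\sqrt h}||E|-1| \le \Per(F) + \frac1{\sqrt h}||F|-1|$.
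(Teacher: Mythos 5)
Your proposal is the direct method plus comparison with the competitor $F$, which is exactly the route the paper takes; the final derivation of \eqref{2} from $\Fh(E,F)\le\Fh(F,F)$ and \eqref{1} is identical. The one place where you genuinely add something is the confinement step: the paper simply declares ``without loss of generality $E_\nu\subset\subset B_R$'', whereas you justify tightness of the minimizing sequence from the bound on $\frac1h\int_{E_j\Delta F}\d_F\,dx$ via $|E_j\setminus B_R|\le\frac{1}{R-\rho}\int_{E_j\Delta F}\d_F\,dx$. This is a more honest treatment, and it is genuinely needed to get $|E_j|\to|E|$ (hence convergence of the non-monotone term $||E_j|-1|$), which $L^1_{\loc}$ convergence alone would not give.

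One small caveat: your claimed \emph{two-sided} convergence $\int_{E_j}\sd_F\,dx\to\int_E\sd_F\,dx$ is not fully justified by the tail estimate you give. On $B_R^c$ you control $|E_j\setminus B_R|\lesssim (R-\rho)^{-1}$ while $\d_F$ grows linearly, so a dyadic decomposition of the tail yields contributions of order one per annulus and the tail of $\int_{E_j\setminus B_R}\d_F\,dx$ need not be uniformly small (a minimizing sequence could carry a vanishing amount of volume escaping to infinity that contributes a fixed positive amount to this integral). This does not damage the proof: after rewriting via \eqref{1}, the integrand $\d_F\,\chi_{E_j\Delta F}$ is nonnegative, so Fatou's lemma gives $\int_{E\Delta F}\d_F\,dx\le\liminf_j\int_{E_j\Delta F}\d_F\,dx$, and lower semicontinuity of each term is all that is required to conclude $\Fh(E,F)\le\liminf_j\Fh(E_j,F)$. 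You should therefore either replace the continuity claim for the distance term by this one-sided Fatou argument, or restrict attention to sequences confined to a fixed ball (as the paper does), where continuity does hold.
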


\begin{proof}
Since $F$ is an admissible competitor, we obtain by \eqref{1} that
\begin{equation}\label{e:dissipation}
0\;<\;\inf_{\tilde E} \Fh(\tilde E,F)+\frac1h\int_{F} \d_{F}\, dx\;\le\;\Per(F) + \frac{1}{\sqrt{h}}\left||F|-1\right|\;<\;\infty.
\end{equation}
Let $\{E_{\nu}\}_{\nu\in\N}$ denote a minimizing sequence of $\Fh(\tacka, F)$.
Without loss of generality we may assume that $E_{\nu}\subset\subset B_R$
for a suitable $R>0$.
Since $\{\chi_{E_{\nu}}\}_{\nu\in\N}$ is bounded in $BV(B_R)$, there exists a
subsequence (not relabeled) that converges weakly to a function $\chi$ in $BV(B_R)$, and thus strongly in $L^1(B_R)$.
In particular, $\chi$ is the characteristic function of some set of finite perimeter $E$. Since $\chi_{\tilde 
E}\mapsto \int_{\tilde E}  \sd_F\, dx$ is continuous 
and the perimeter is lower semi-continuous with respect to $L^1$ convergence, it follows that
\[
\Fh(E,F)\;\le\; \liminf_{\nu\uparrow\infty} \Fh(E_{\nu},F) = \inf_{\tilde E \subset\R^n} \Fh(\tilde E,F).
\]
Therefore, $E$ minimizes $\Fh(\tacka,F)$ and
\eqref{2} follows from \eqref{e:dissipation}.
\end{proof}

By standard results on minimal surfaces (see \cite{Maggi}), it holds that
the minimizers $E$ of $\Fh(\tacka,F)$ can be chosen to be closed subsets with
$\de E$ of class $C^{1,\alpha}$ up to a (relatively closed) singular set of dimension
at most $n-7$. Using the Euler--Lagrange equation for $\Fh(\tacka,F)$,
one can also show that the regular part of the boundary $\de E$ is actually
$C^{2,\kappa}$  (cp.~Lemma~\ref{lem:EL}).

\subsection{$L^\infty$ and $L^1$-estimates}
Our next statement gives a uniform bound on the distance between the boundary of the minimizing set and the boundary of 
the reference set.

\begin{proposition}[($L^\infty$-estimate)]\label{p:Linfty}
There exists a dimensional constant $\gamma_n>0$ with the following property.
Let $F \subset \R^n$ be a bounded set of finite perimeter
and let $E$ be a minimizer of $\Fh(\tacka,F)$. Then,
\begin{equation}\label{e:Linfty}
\sup_{E\Delta F} \d_{F}\leq \gamma_{n}\sqrt{h}.
\end{equation}
\end{proposition}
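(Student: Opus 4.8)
The plan is to argue by contradiction using a density-type comparison argument: if the signed distance between $\partial E$ and $\partial F$ were large at some point $x_0 \in E \Delta F$, then a ball around $x_0$ would be entirely contained in $E \Delta F$, and cutting away (or filling in) a portion of $E$ inside that ball would decrease $\Fh(\cdot, F)$, contradicting minimality. The key quantitative input is the isoperimetric-type trade-off: removing a small ball $B_r(x_0)$ from $E$ costs at most $n\omega_n r^{n-1}$ in perimeter (in fact it can only help if $\partial E$ already passes through the ball, but the crude bound suffices) while it gains at least a fixed multiple of $r^n / h$ from the distance term, because $\d_F \geq r$ on a definite fraction of the excised region when $B_{2r}(x_0) \subset E \Delta F$; one also controls the change in the volume penalty $\tfrac{1}{\sqrt h}||E|-1|$, whose variation under the modification is at most $\tfrac{1}{\sqrt h}\omega_n r^n$, which is lower order in $r$ compared to the $r^{n-1}$ perimeter term once $r \lesssim \sqrt h$.

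More precisely, first I would suppose that $\sup_{E\Delta F}\d_F > \Lambda \sqrt h$ for a constant $\Lambda$ to be fixed, and pick $x_0$ with $\d_F(x_0)$ close to this supremum; by definition of the signed distance, the open ball $B_\rho(x_0)$ with $\rho = \d_F(x_0)$ lies entirely in $E\setminus F$ or entirely in $F \setminus E$ (depending on the sign), hence in $E\Delta F$, and moreover $\d_F \geq \rho - |x - x_0|$ throughout this ball. Then for a radius $r \leq \rho$ to be chosen I would compare $E$ with $E' = E \setminus B_r(x_0)$ (in the case $B_\rho(x_0) \subset E \setminus F$; the other case is symmetric, comparing with $E \cup B_r(x_0)$). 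Using $\Per(E') \leq \Per(E) + n\omega_n r^{n-1}$ (the exterior ball can only add its own surface area) and the identity \eqref{1} together with $\d_F \geq \rho/2$ on $B_{\rho/2}(x_0)$, minimality $\Fh(E,F) \leq \Fh(E',F)$ yields an inequality of the form
\[
\frac{1}{h}\int_{B_r(x_0)} \d_F\, dx \;\leq\; n\omega_n r^{n-1} + \frac{1}{\sqrt h}\,\omega_n r^n .
\]
Choosing $r = \min\{\rho, \sqrt h\}$ and estimating the left side from below by $c_n r^{n+1}/h$ when $\rho \geq \sqrt h$ (so that $\d_F \geq \sqrt h / 2$ on, say, $B_{\sqrt h/2}(x_0)$), one obtains $c_n \sqrt h \leq C_n$ plus a lower-order term, which is false once the constants are chosen correctly — actually this shows $\rho$ cannot exceed a dimensional multiple of $\sqrt h$, i.e.~$\rho < \gamma_n \sqrt h$, giving the claim.

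The main obstacle is getting the geometry of the comparison exactly right and making the constants genuinely dimensional and independent of $h$, $F$, and $E$. One subtlety is that the excised ball $B_r(x_0)$ must actually have $\d_F$ bounded below by a fixed fraction of $r$ on a set of measure comparable to $r^n$, which forces the choice $r \asymp \min\{\rho,\sqrt h\}$ and a careful split into the regimes $\rho \gtrsim \sqrt h$ (the interesting case) and $\rho \lesssim \sqrt h$ (where there is nothing to prove). A second point is handling the volume term: since its variation scales like $r^n/\sqrt h$ while the perimeter gain scales like $r^{n-1}$, one needs $r \lesssim \sqrt h$ precisely so that $r^n/\sqrt h \lesssim r^{n-1}$, which is automatically guaranteed by the choice above; nonetheless this is the place where the soft volume constraint (rather than a hard one) is genuinely convenient, since it lets us use $B_r(x_0)$-modifications as free competitors without worrying about restoring $|E'|=1$.
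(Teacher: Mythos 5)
Your overall strategy coincides with the paper's: argue by contradiction, use $E\setminus B_r(x_0)$ or $E\cup B_r(x_0)$ as a competitor, and trade the $O(r^{n-1})$ perimeter cost against an $O(\rho\, r^{n}/h)$ gain in the dissipation term, the volume penalty $r^n/\sqrt h$ being of lower order for $r\lesssim\sqrt h$. However, there is a genuine gap at the first geometric step. From $\d_F(x_0)=\rho$ with $x_0\in E\setminus F$ one may only conclude that $B_\rho(x_0)\subset F^c$ (respectively $B_\rho(x_0)\subset F$ when $x_0\in F\setminus E$); your claim that $B_\rho(x_0)$ lies entirely in $E\Delta F$ is unjustified, because the distance is measured to $F$, not to $\partial E$, and $E$ may be arbitrarily thin (or have an arbitrarily thin complement) near $x_0$. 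Consequently the region actually modified by the competitor is $E\cap B_r(x_0)$ (resp.\ $B_r(x_0)\setminus E$), not the full ball, and the left-hand side of your key inequality is really $\tfrac{1}{h}\int_{E\cap B_r(x_0)}\d_F\,dx\geq \tfrac{\rho}{2h}\,|E\cap B_r(x_0)|$. Without a lower bound $|E\cap B_r(x_0)|\gtrsim r^n$ this quantity can be vanishingly small, the comparison yields no contradiction, and the step ``estimating the left side from below by $c_n r^{n+1}/h$'' does not follow.

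Supplying that lower bound is precisely the role of the one-sided density estimate, Lemma~\ref{l.one side}, which is the one nontrivial ingredient your proposal omits: its proof is a De Giorgi-type argument combining the relative isoperimetric inequality with a differential inequality for $f(r)=|B_r(x_0)\setminus E|$. The paper's proof of Proposition~\ref{p:Linfty} first extracts from the ball comparison the one-sided minimality inequality \eqref{e.one side min1}, feeds it into Lemma~\ref{l.one side} to obtain $|B_r(x_0)\setminus E|\geq c_n r^n$, and only then closes the contradiction against the upper bound $|B_r(x_0)\setminus E|\leq C_n\sqrt h\, r^{n-1}$ coming from the same comparison. To complete your argument you must either prove this density estimate or replace it by an equivalent quantitative statement; the rest of your scheme (choice of $r\asymp\min\{\rho,\sqrt h\}$, treatment of the volume penalty, symmetry of the two cases) is sound.
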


\begin{proof}
The proof of this proposition is based on the density estimates for one-side minimizing set
which for readers' convenience we prove in the Appendix~\ref{s:a}.
We claim indeed that the statement holds with
$$
\gamma_{n} = \max\Big\{3, \frac{4n\omega_n}{c_n}\Big\},
$$ 
where $c_n$ is the dimensional constant in Lemma~\ref{l.one side}.
The argument is by contradiction. Let 
$c>\max\left\{3, \frac{4n\omega_n}{c_n}\right\}$ and let
$x_0\in F\Delta E$ contradict \eqref{e:Linfty} with $\gamma_n$ replaced by $c$.
Without loss of generality, we can assume that $x_0 \in F\setminus E$: the other case
is at all analogous. 
We then have that
\begin{equation}\label{4}
\sdF(x_0) < - c\, \sqrt{h}.
\end{equation}
Then any ball $B_r(x_0)$ of radius $r\le\frac{c\sqrt{h}}2$ 
is contained in $F$. By the minimality of $E$, we have
$\Fh(E,F)\le \Fh(E \cup B_r(x_0),F)$, and thus
\begin{equation}\label{e:estimate energy}
\Per(E) \leq \Per(E\cup B_r(x_0)) + 
\frac1h \int_{B_r(x_0)\setminus E} \sdF\, dx + \frac{1}{\sqrt{h}}|B_r(x_0) \setminus E|.
\end{equation}
We use \eqref{4} and $r\le\frac{c\sqrt{h}}2$ to infer that
\begin{equation}\label{e:estimate volume}
\frac1h\int_{B_r(x_0) \setminus E} \sdF\, dx < - \frac{c}{2\sqrt{h}} |B_r(x_0) \setminus E|.
\end{equation}
Then \eqref{e:estimate energy} and \eqref{e:estimate volume} yield
\begin{align}
\Per(E) &\le \Per (E \cup B_r(x_0)) - h^{-\sfrac12}
\left(\frac{c}{2} -1\right) |B_r(x_0) \setminus E|.\label{e.one side min1}
\end{align}
By assumption $c>3$ and we can apply Lemma \ref{l.one side} with $\mu =0$ 
and obtain
\begin{equation}\label{5}
|B_r(x_0) \setminus E| \geq c_n r^n \quad \mbox{for a.e. }0<r <\frac{c\sqrt{h}}2.
\end{equation}
On the other hand, from \eqref{e.one side min1} we deduce also that
for a.e.~$0<r <\frac{c\sqrt{h}}2$
\begin{align}\label{e.ball minus E}
h^{-\sfrac12}
\left(\frac{c}{2} -1\right) |B_r(x_0) \setminus E| & \leq 
\Per (E \cup B_r(x_0))  - \Per (E)\notag\\
&\leq \Ha^{n-1}(\de B_r(x_0) 
\setminus E) \leq n\,\omega_n\,r^{n-1}.
\end{align}
Combining \eqref{5} and \eqref{e.ball minus E}, we get that
\[
c_n r^n \leq |B_r(x_0) \setminus E|
\leq n\,\omega_n\,\left(\frac{c}{2} -1\right)^{-1}
\sqrt{h}\, r^{n-1},
\]
for almost all $0<r <\frac{c\sqrt{h}}2$, which gives the desired 
contradiction to the choice of $c$ as soon as $r \uparrow \frac{c\sqrt{h}}{2}$.
\end{proof}

The following density estimates are now an immediate consequence.

\begin{corollary}\label{c:density}
Let $F \subset \R^n$ be a bounded set of finite perimeter
and let $E$ be a minimizer of $\Fh(\tacka,F)$. Then, for every $r \in (0, 
\gamma_n\,\sqrt{h})$ and for every $x_0 \in \de E$, it holds
\begin{gather}
\min\big\{|B_r(x_0)\setminus E|,|E\cap B_r(x_0)|\big\} \geq c_n\, 
r^n,\label{e:density vol}\\
c_n r^{n-1}\le \Per(E , B_{r}(x_0)) \leq C_n\,r^{n-1}.\label{e:density area}
\end{gather}
\end{corollary}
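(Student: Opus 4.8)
The plan is to derive Corollary~\ref{c:density} directly from the $L^\infty$-estimate of Proposition~\ref{p:Linfty} together with the one-sided minimality property that $E$ enjoys as a minimizer of $\Fh(\tacka,F)$. The key observation is that once we know $\sup_{E\Delta F}\d_F\le\gamma_n\sqrt h$, for any $x_0\in\de E$ and any radius $r\in(0,\gamma_n\sqrt h)$ the ball $B_r(x_0)$ is \emph{not} entirely contained in $E\Delta F$; more precisely, since $x_0\in\de E$ lies on the topological boundary, either $x_0\in\overline{F}$ or $x_0\notin F$, and in both cases one can control how $B_r(x_0)$ meets $F$. The cleanest route is: first establish that $E$ is an $(\Lambda,r_0)$-almost minimizer of the perimeter (a "one-sided" minimizer in the sense of Lemma~\ref{l.one side}) at scales below $\gamma_n\sqrt h$, then invoke the standard density estimates for such minimizers.

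Concretely, I would proceed as follows. \textbf{Step 1: one-sided minimality at small scales.} Fix $x_0\in\de E$ and $r<\gamma_n\sqrt h$. Comparing $E$ with $E\cup B_r(x_0)$ and with $E\setminus B_r(x_0)$ in the minimality inequality $\Fh(E,F)\le\Fh(E',F)$, and using that on $B_r(x_0)$ one has $|\sdF|\le\gamma_n\sqrt h$ pointwise on $E\Delta F$ while on the part of $B_r(x_0)$ lying in $E\cap F$ or in $E^c\cap F^c$ the sign of $\sdF$ is favourable — this is exactly the computation already carried out inside the proof of Proposition~\ref{p:Linfty} — one gets
\[
\Per(E)\le \Per(E')+\frac{C_n}{\sqrt h}\,|E\Delta E'|
\]
for competitors $E'$ differing from $E$ inside $B_r(x_0)$, plus the harmless $h^{-1/2}$-term coming from the volume penalization. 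Absorbing $|E\Delta E'|\le|B_r(x_0)|\le\omega_n r^n\le\omega_n(\gamma_n\sqrt h)\,r^{n-1}$, the error term is bounded by $C_n r^{n-1}$, which is precisely the structure needed to apply the density-estimate lemma.

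\textbf{Step 2: volume density from below.} Applying Lemma~\ref{l.one side} (with the appropriate choice of $\mu$ reflecting the $h^{-1/2}$ coefficient, noting $r<\gamma_n\sqrt h$ makes the relevant dimensionless quantity $\mu r$ bounded) to both $E$ and its complement $E^c$ — both are one-sided minimizers by the symmetric argument — yields $|B_r(x_0)\setminus E|\ge c_n r^n$ and $|E\cap B_r(x_0)|\ge c_n r^n$, which is \eqref{e:density vol}. \textbf{Step 3: perimeter density.} The upper bound $\Per(E,B_r(x_0))\le C_n r^{n-1}$ follows by comparing $E$ with $E\setminus B_r(x_0)$ (or $E\cup B_r(x_0)$), using $\Per(E,B_r(x_0))\le \Ha^{n-1}(\de B_r(x_0))+C_n r^{n-1}$ from the almost-minimality; the lower bound $\Per(E,B_r(x_0))\ge c_n r^{n-1}$ is the standard consequence of the relative isoperimetric inequality combined with the two-sided volume density from Step 2 (if $\Per(E,B_r)$ were too small, the relative isoperimetric inequality would force $\min\{|E\cap B_r|,|B_r\setminus E|\}$ to be a higher power of $r$, contradicting \eqref{e:density vol}).

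I expect the main obstacle to be purely bookkeeping rather than conceptual: making sure that the $h^{-1/2}$ prefactor in the energy, when combined with the restriction $r<\gamma_n\sqrt h$, indeed produces an error term of the correct order $r^{n-1}$ (and not, say, $h^{-1/2}r^n$ with an uncontrolled constant), so that the dimensional constants $c_n,C_n$ in Lemma~\ref{l.one side} can be quoted verbatim. A secondary point requiring a little care is the case distinction at $x_0\in\de E$ according to whether $B_r(x_0)\subset F$, $B_r(x_0)\subset F^c$, or $B_r(x_0)$ straddles $\de F$ — but since $r<\gamma_n\sqrt h$ and the $L^\infty$-estimate already confines $E\Delta F$ to a $\gamma_n\sqrt h$-neighbourhood of $\de F$, in the straddling case one still has the sign of $\sdF$ under control up to the $\gamma_n\sqrt h$ error, which is all that is needed. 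In short, everything reduces to Proposition~\ref{p:Linfty} plus Lemma~\ref{l.one side}, and the proof is essentially two paragraphs long.
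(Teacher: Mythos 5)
Your argument is correct and follows the paper's own proof essentially verbatim: compare $E$ with $E\cup B_r(x_0)$ and $E\setminus B_r(x_0)$, use Proposition~\ref{p:Linfty} to reduce the dissipation term to an error of the form $C_n h^{-1/2}|E\Delta E'|$, apply Lemma~\ref{l.one side} (to $E$ and to its complement) for the volume densities, and get the perimeter bounds from the relative isoperimetric inequality and the same comparison. One minor remark: in the comparison with $E\setminus B_r(x_0)$ it is the portion $B_r(x_0)\cap E\cap F$ (which is \emph{not} contained in $E\Delta F$) that carries the unfavourable sign, and there $|\sd_F|\le C_n\sqrt h$ holds not directly from the $L^\infty$-estimate but because $x_0\in\de E$ forces points of $E^c$ (hence of $E\Delta F$ or of $F^c$) to accumulate at $x_0$ — a harmless point that does not affect the conclusion.
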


\begin{proof}
Since $E$ is a minimizer of $\Fh(\tacka,F)$, for any $x_0\in \de E$, it holds 
that $\Fh(E,F)\le \Fh(E\setminus B_{r}(x_0),F)$, which implies
\[
\Per(E) + \frac1h \int_{E\cap B_{r}(x_0)} \sdF\, dx\leq  
\Per(E\setminus 
B_{r}(x_0)) + \frac{1}{\sqrt{h}}|E\cap B_{r}(x_0)|.
\]
Estimating the second term via Proposition~\ref{p:Linfty}, we obtain
\begin{equation}
\label{6bis}
\Per(E) \leq \Per(E\setminus B_{r}(x_0)) + \frac{C_n  }{\sqrt{h}} |E\cap 
B_{r}(x_0)|.
\end{equation}
A similar analysis shows that
\[
 \Per(E) \leq \Per(E\cup B_{r}(x_0)) + \frac{C_n  }{\sqrt{h}} 
|B_{r}(x_0)\setminus E|.
\]
Therefore, by Lemma \ref{l.one side} we deduce (by possibly redefining $c_n$)
\[
\min\big\{ |E\cap B_{r}(x_0)|, |B_{r}(x_0)\setminus E|\big\} \geq 
c_n\,r^n \quad\forall\;0<r \leq \gamma_n\sqrt{h}.
\]
The first inequality in \eqref{e:density area} is now an immediate consequence of the relative 
isoperimetric inequality (cf.~\cite[Cor.\ 1.29]{Giusti}). For the second inequality, we rewrite \eqref{6bis} as
\[
\Ha^{n-1}(\partial E\cap B_r(x_0)) \leq \Ha^{n-1}(\partial B_r(x_0) \cap E) + \frac{C_n}{\sqrt{h}} |E\cap B_r(x_0)|.
\]
Since $r< \gamma_n \sqrt{h}$, the upper bound is obvious.
\end{proof}

Next we prove an estimate on the volume of the symmetric difference of
two consecutive sets of the approximate solutions.

\begin{proposition}[($L^1$-estimate)]\label{p:L1}
Let $F \subset \R^n$ be a bounded set of finite perimeter
and let $E$ be a minimizer of $\Fh(\tacka,F)$. Then,
\begin{equation}\label{9}
 |E\Delta F|\;\le\;  C_n\left(\ell \,\Per(E) + \frac1{\ell} \int_{E\Delta 
F} \d_{F}\, dx\right) \quad\forall \; \ell\leq  \gamma_n\,\sqrt{h}.
\end{equation}
\end{proposition}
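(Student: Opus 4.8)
The plan is to estimate $|E\Delta F|$ by slicing the symmetric difference into thin layers indexed by the distance to $\partial F$, and to control each layer using the density estimates from Corollary~\ref{c:density}. More precisely, for $t>0$ set $\Sigma_t := \{x \in E\Delta F : \d_F(x) = t\}$, so that by the coarea formula $|E\Delta F| = \int_0^\infty \Ha^{n-1}(\Sigma_t)\,dt$, and by Proposition~\ref{p:Linfty} the integrand vanishes for $t > \gamma_n\sqrt h$. The key geometric observation is that any point $x \in E\Delta F$ with $\d_F(x) = t$ lies within distance $t$ of $\partial F$ \emph{and also} within distance $t$ of $\partial E$: indeed, if say $x \in F\setminus E$, then the ball $B_t(x)$ is contained in $F$ (since $\dist(x,F^c) = t$) but $x \notin E$, so $B_t(x)$ must meet $\partial E$ (it cannot be entirely outside $E$ because its volume fraction outside $E$ would be too small—or more simply, a segment from $x$ to a far point of $E\cap B_t$, if $E\cap B_t \neq\emptyset$; and if $E\cap B_t(x)=\emptyset$ one argues directly). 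Hence $E\Delta F \subset \{x : \dist(x,\partial E) \le \gamma_n\sqrt h\}$ as well, and this is the set we will cover.

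First I would fix $\ell \le \gamma_n\sqrt h$ and produce, via a Vitali-type covering argument, a finite collection of disjoint balls $\{B_\ell(x_i)\}_{i\in I}$ with $x_i \in \partial E$ such that $\{B_{5\ell}(x_i)\}_{i\in I}$ covers the $\ell$-neighborhood of $\partial E$ (which contains $E\Delta F$ by the previous paragraph, once $\ell$ is comparable to the layer thickness—one may need to cover in the slightly larger neighborhood of radius $\gamma_n\sqrt h$ and handle the ratio $\gamma_n\sqrt h/\ell$, so it is cleaner to take the covering radius to be $\gamma_n\sqrt h$ itself and then note $\ell \le \gamma_n \sqrt h$; alternatively cover at scale $\ell$ and sum over $\lceil \gamma_n\sqrt h/\ell\rceil$ shells). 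On one hand, the lower volume density bound \eqref{e:density vol} gives $c_n \ell^n \le |E\cap B_\ell(x_i)|$, and disjointness together with the perimeter upper bound—or rather, counting via the perimeter of $E$—controls the cardinality: summing the lower perimeter density $c_n\ell^{n-1} \le \Per(E,B_\ell(x_i))$ over the disjoint balls yields $\#I \le C_n \ell^{-(n-1)}\Per(E)$. On the other hand, $|E\Delta F| \le \sum_{i} |E\Delta F \cap B_{5\ell}(x_i)|$, and on each such ball we bound $|E\Delta F \cap B_{5\ell}(x_i)| \le C_n \ell^{-1}\int_{E\Delta F \cap B_{5\ell}(x_i)} \d_F\,dx + (\text{a term for the part where } \d_F \text{ is small})$—but the part where $\d_F < \ell$ is exactly where we use the covering count, since $|\{x\in E\Delta F\cap B_{5\ell}(x_i): \d_F(x)<\ell\}| \le |B_{5\ell}(x_i)| = C_n\ell^n$, and there are $\#I \le C_n\ell^{-(n-1)}\Per(E)$ such balls.

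Putting the two pieces together: the ``deep'' part of $E\Delta F$ (where $\d_F \ge \ell$) contributes at most $\frac1\ell\int_{E\Delta F}\d_F\,dx$ after dividing and multiplying by $\d_F$, while the ``shallow'' part (where $\d_F < \ell$) is covered by the $\#I$ balls of radius $5\ell$, contributing at most $\#I \cdot C_n\ell^n \le C_n\ell\,\Per(E)$. Summing gives exactly \eqref{9}. The main obstacle I anticipate is the bookkeeping in the covering argument—ensuring the covering multiplicity is dimensionally bounded and that the two regimes (inside the reference set $F$ and inside its complement) are handled symmetrically, and reconciling the covering scale $\ell$ with the a priori bound $\gamma_n\sqrt h$ on the thickness of $E\Delta F$—rather than anything deep; all the analytic content is already packaged in Corollary~\ref{c:density} and the $L^\infty$-estimate.
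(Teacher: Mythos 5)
Your proposal follows essentially the same route as the paper's proof: split $E\Delta F$ according to whether $\d_F$ exceeds $\ell$, dispose of the deep part by Chebyshev's inequality, and cover the shallow part by a Vitali family of disjoint balls centred on $\partial E$, converting the density estimates of Corollary~\ref{c:density} (lower perimeter density to count the balls, their volume to bound each contribution) into the term $C_n\,\ell\,\Per(E)$. The one shaky point is your parenthetical justification that a point of $E\Delta F$ with $\d_F=t$ lies within distance $t$ of $\partial E$ (the ``volume fraction outside $E$ would be too small'' remark does not hold together, and the case $E\cap B_t(x)=\emptyset$ is exactly the delicate one), but the paper asserts the very same covering of $\{x\in E\Delta F:\ \d_F(x)\le\ell\}$ by the balls $B_{3\ell}(x_i)$ without further argument, so your proposal matches its proof step for step.
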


\begin{proof}

In order to estimate $E\Delta F$, we split it into two parts:
\[
|E\Delta F| \leq |\{x\in E\Delta F:\: \d_{F}(x)\le \ell\}| + 
|\{x\in E\Delta F:\: \d_{F}(x)\ge \ell\}|.
\]
The second term is easily estimated by
\[
 |\{x\in E\Delta F:\: \d_{F}(x)\geq \ell\}|\leq \frac1{\ell} 
\int_{E\Delta F} \d_{F}(x)\, dx.
\]
To estimate the first term, we use a simple covering argument
to find a collection of disjoint balls $\{B_{\ell}(x_i)\}_{i \in I}$ with 
$x_i\in\partial^*E$ and $I \subset \N$ a finite set such that 
$\de^* E \subset \cup_{i \in I}B_{2\ell}(x_i)$.
Note that by \eqref{e:density vol} and the relative isoperimetric inequality
(cf.~\cite[Cor.\ 1.29]{Giusti}) we have for every $i\in I$
\begin{align*}
|B_{3\ell}(x_i)| &
\stackrel{\eqref{e:density vol}}{\leq} C_n \min\{ |E\cap 
B_{\ell}(x_i)|, |B_{\ell}(x_i)\setminus E|\}\\
&\leq  C_n\ell\, \Ha^{n-1} (\partial^* E\cap B_{\ell}(x_i)).
\end{align*}
Note finally that the set $\{x\in E\Delta F:\: \d_{F}(x)\le \ell\}$ 
is covered by $\{B_{3\ell}(x_i)\}_{i\in I}$.
Summing over $i$ and the choice of the balls $\{B_{\ell}(x_i)\}_{i\in I}$ yields
\begin{align*}
 |\{x\in E\Delta F:\: \d_{ F}(x)\le \ell\}|&\leq \sum_{i \in 
I} |B_{3\ell}(x_i)|\\
&\leq C_{n}\, \ell \sum_{i\in I} \Ha^{n-1}(\partial^* E\cap B_{\ell}(x_i))\\
&\leq C_{n}\, \ell\,\Per(E). \qedhere
\end{align*}
\end{proof}

\subsection{H\"older continuity in time}

As an immediate consequence of the discrete dissipation inequality \eqref{2}, 
we remark that 
\begin{multline}\label{10}
\Per(\Eh_t) + \frac1h\int_{\Eh_t\Delta \Eh_{t-h}} \d_{\Eh_{t-h}}\, 
dx + \frac{1}{\sqrt{h}}\big\vert|\Eh_t|-1\big\vert\\
\leq \Per(\Eh_{t-h})+ 
\frac{1}{\sqrt{h}}\big\vert|\Eh_{t-h}|-1\big\vert
\quad\forall\; t\in [h,+\infty),
\end{multline}
and, by iterating \eqref{10} and using $|E_0|=1$,
\begin{gather}
\Per(\Eh_{t}) \leq \Per(E_0) \quad \forall\; t\geq0,\label{e:per decreasing}\\
\frac{1}{\sqrt{h}}\big\vert|\Eh_{t}|-1\big\vert\leq \Per(E_0) \quad \forall\; 
t\geq0,\label{e:volume converging}\\
\int_h^T \int_{\Eh_{t} \Delta \Eh_{t-h}} \frac{\d_{\Eh_{t-h}}}{h}\,dx \leq \Per(E_0),\label{e:extra1}
\end{gather}
for every $T>h$. Similarly, using Proposition~\ref{p:L1} with $\ell = h < \gamma_n \sqrt{h}$,
we also get
\begin{align}\label{e:extra2}
\int_h^T|\Eh_{t} \Delta \Eh_{t-h}| & \leq C_n \sum_{k=1}^{[\sfrac{T}{h}]} \left(h\,\Per(\Eh_{kh})+
\int_{\Eh_{kh} \Delta \Eh_{(k-1)h}}\frac{\d_{\Eh_{t-h}}}{h}\,dx \right)\notag\\
& \leq C_n\, (T +1)\,\Per(E_0).
\end{align}

\begin{proposition}[($C^{1/2}$ regularity in time)]\label{p:hoelder}
Let $h\le 1$ and let $\{E_t\}_{t \geq 0}$ be an approximate flat flow. 
Then it holds
\[
\vert\Eh_t\Delta\Eh_{s}\vert\;\le\;C_{n,0}\vert t-s\vert^{1/2}
\quad\forall\; 0\leq t\leq s<+\infty.
\]
\end{proposition}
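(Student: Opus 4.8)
The plan is to deduce the $C^{1/2}$ bound for arbitrary time intervals from the $L^1$-estimate applied step by step. First I would reduce to the case where $t$ and $s$ are both multiples of $h$: since $\Eh$ is constant on each interval $[kh,(k+1)h)$, for general $0 \le t \le s$ we have $\Eh_t = \Eh_{[t/h]h}$ and $\Eh_s = \Eh_{[s/h]h}$, and if $[t/h] = [s/h]$ there is nothing to prove, while otherwise $([s/h]-[t/h])h \le s - t + h \le s-t+1$; if $s - t \ge h$ this is comparable to $s-t$, and if $s-t < h$ one handles the single jump directly. So it suffices to bound $|\Eh_{mh} \Delta \Eh_{\ell h}|$ in terms of $((m-\ell)h)^{1/2}$ for integers $\ell \le m$.

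For that, I would use the triangle-type inequality $|\Eh_{mh} \Delta \Eh_{\ell h}| \le \sum_{k=\ell+1}^{m} |\Eh_{kh} \Delta \Eh_{(k-1)h}|$ together with Proposition~\ref{p:L1} applied with the choice $\ell = \sqrt{h}$ (which is admissible since $\sqrt h \le \gamma_n \sqrt h$, as $\gamma_n \ge 3 > 1$). This gives
\[
|\Eh_{kh}\Delta\Eh_{(k-1)h}| \le C_n\Big(\sqrt h\,\Per(\Eh_{kh}) + \frac{1}{\sqrt h}\int_{\Eh_{kh}\Delta\Eh_{(k-1)h}}\d_{\Eh_{(k-1)h}}\,dx\Big).
\]
Summing over $k$ from $\ell+1$ to $m$ and using $\Per(\Eh_{kh}) \le \Per(E_0)$ from \eqref{e:per decreasing} for the first term yields a contribution bounded by $C_n (m-\ell)\sqrt h\,\Per(E_0)$. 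For the second term, I would pull out one factor of $1/\sqrt h$ and apply Cauchy--Schwarz: writing the sum as $\sum_k \frac1{\sqrt h}\int \d_{\Eh_{(k-1)h}} = \sum_k \sqrt h \cdot \frac1h\int \d_{\Eh_{(k-1)h}}$, Cauchy--Schwarz in $k$ over the $m-\ell$ indices gives
\[
\sum_{k=\ell+1}^m \sqrt h\,\frac1h\int_{\Eh_{kh}\Delta\Eh_{(k-1)h}}\d_{\Eh_{(k-1)h}}\,dx \le \big((m-\ell)h\big)^{1/2}\Big(\sum_{k=\ell+1}^m \frac1h\int_{\Eh_{kh}\Delta\Eh_{(k-1)h}}\d_{\Eh_{(k-1)h}}\,dx\Big)^{1/2},
\]
and the last sum is bounded by $\Per(E_0)$ by the iterated dissipation inequality \eqref{e:extra1}. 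Since $(m-\ell)h = $ roughly $|t-s|$ and $(m-\ell)\sqrt h = \sqrt h \cdot (m-\ell) \le \sqrt{(m-\ell)h}\cdot\sqrt{(m-\ell)}$... here I must be careful: the first term $C_n(m-\ell)\sqrt h \Per(E_0)$ is only good when $(m-\ell)\sqrt h \lesssim \sqrt{(m-\ell)h}$, i.e. when $(m-\ell)h \lesssim 1$. For short time differences this is fine; for large $|t-s|$ one uses instead that $|\Eh_t \Delta \Eh_s| \le 2|E_0| + (\text{small corrections from the volume estimate \eqref{e:volume converging}})$ is bounded, so the $C^{1/2}$ inequality holds trivially with a possibly larger constant once $|t-s| \ge 1$. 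Combining the two regimes gives the claim with a constant $C_{n,0}$ depending on $n$ and $\Per(E_0)$, $|E_0|$.

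The main obstacle I expect is precisely the bookkeeping between the two regimes and making sure the Cauchy--Schwarz step is set up so that the ``bad'' linear-in-$(m-\ell)$ term from the perimeter really is controlled — the trick is to split $\sqrt h \Per(\Eh_{kh})$ as $\sqrt h \cdot \Per(E_0)$ and bound $\sum_k \sqrt h = (m-\ell)\sqrt h$, then observe $(m-\ell)\sqrt h = \sqrt{(m-\ell)h}\cdot\sqrt{m-\ell}$ is \emph{not} directly $\lesssim \sqrt{(m-\ell)h}$ unless $m-\ell \le$ const, which forces the case distinction $|t-s| \le 1$ versus $|t-s| \ge 1$. Once that dichotomy is in place the estimate is routine; the non-locality and the soft volume constraint play no role here beyond supplying the uniform perimeter bound \eqref{e:per decreasing} and the summed dissipation \eqref{e:extra1}, both already established.
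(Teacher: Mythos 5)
There is a genuine gap, and it sits exactly at the point where you say you ``must be careful''. The choice $\ell=\sqrt h$ in Proposition~\ref{p:L1} cannot work: the perimeter contribution it produces is $C_n(m-\ell)\sqrt h\,\Per(E_0)\approx C_n\,|t-s|\,h^{-1/2}\,\Per(E_0)$, which blows up as $h\downarrow0$ for every fixed $|t-s|>0$. Your criterion for when this term is acceptable is miscomputed: $(m-\ell)\sqrt h\lesssim\sqrt{(m-\ell)h}$ is equivalent to $m-\ell\lesssim1$, i.e.\ to $|t-s|\lesssim h$, \emph{not} to $(m-\ell)h\lesssim1$; hence the dichotomy $|t-s|\le1$ versus $|t-s|\ge1$ does not rescue the estimate (take $|t-s|=\sfrac12$ and let $h\to0$: the number of steps is $\sim h^{-1}$ and the first term is $\sim h^{-1/2}$). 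The missing idea is that the length scale in \eqref{9} must depend on $|t-s|$ so as to balance the two terms: the paper takes $\ell=\gamma_n h/|t-s|^{1/2}$, admissible precisely because one may assume $|t-s|\ge h$, so that the perimeter term sums to $C_n\,kh\,|t-s|^{-1/2}\Per(E_0)\le C_n|t-s|^{1/2}\Per(E_0)$ and the distance term sums, via the telescoping dissipation \eqref{10} together with \eqref{e:per decreasing} and \eqref{e:volume converging}, to $C_n|t-s|^{1/2}\Per(E_0)$ as well. (Equivalently: with $N=m-\ell$ steps the two contributions are $\sim N\ell$ and $\sim h/\ell$ times $\Per(E_0)$, optimized at $\ell\sim h/\sqrt{Nh}$, giving $\sqrt{Nh}=|t-s|^{1/2}$.)

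A secondary, more easily repaired issue is the Cauchy--Schwarz step: with $a_k:=h^{-1}\int_{\Eh_{kh}\Delta\Eh_{(k-1)h}}\d_{\Eh_{(k-1)h}}\,dx$ the inequality $\sum_k\sqrt h\,a_k\le((m-\ell)h)^{1/2}\big(\sum_k a_k\big)^{1/2}$ is not Cauchy--Schwarz (that would give $\big(\sum_k a_k^2\big)^{1/2}$ on the right) and is false in general, e.g.\ for equal $a_k>1$. It is also unnecessary: since $\sum_k a_k\le\Per(E_0)$ by iterating \eqref{10}, one has directly $\sum_k\sqrt h\,a_k\le\sqrt h\,\Per(E_0)\le|t-s|^{1/2}\Per(E_0)$ once $|t-s|\ge h$. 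With the correct $|t-s|$-dependent choice of $\ell$ neither the Cauchy--Schwarz manipulation nor the large-time fallback via \eqref{e:volume converging} is needed.
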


\begin{proof}
Clearly it is enough to consider the case $s-t \geq {h}$.
Let $j\in \N$ and $k\in \N\setminus \{0\}$ be such that $t\in[jh, (j+1)h)$ 
and $s\in[(j+k)h, (j+k+1)h)$. Then, we can use Proposition~\ref{p:L1} with 
$\ell =\gamma_n \sfrac{h}{|t-s|^{\sfrac12}}$ (note that $ \ell \leq\gamma_n\sqrt h$ by the 
assumption $s-t \geq {h}$) and \eqref{e:per decreasing}, and estimate in the following way:
\begin{align}
\vert\Eh_t\Delta\Eh_{s}\vert &\leq  \sum_{m=1}^k  |\Eh_{(j+m)h}\Delta 
\Eh_{(j+m-1)h}|\notag\\
& \leq C_n  \sum_{m=1}^k \frac{h}{|t-s|^{1/2}} \Per(\Eh_{(j+m)h}) \notag\\
& \quad+ C_n  \sum_{m=1}^k  \frac{|t-s|^{1/2}}{h} \int_{\Eh_{(j+m)h}\Delta 
\Eh_{(j+m-1)h}} 
\mathrm{d}_{\Eh_{(j+m-1)h}}\, dx.\notag
\end{align}
By using \eqref{10} we estimate the sum above by 
\begin{align}
\vert\Eh_t\Delta\Eh_{s}\vert
& \leq C_n  \sum_{m=1}^k \frac{h}{|t-s|^{1/2}}\Per(E_0) \notag\\
&\quad
+ C_n  \sum_{m=1}^k |t-s|^{\sfrac12}\Big(\Per(\Eh_{(j+m-1)h}) - 
\Per(\Eh_{(j+m)h})\Big) \notag\\
& \quad+ C_n  \sum_{m=1}^k  \frac{|t-s|^{1/2}}{\sqrt{h}} 
\Big(  
\big\vert|\Eh_{(j+m-1)h}|-1\big\vert - \big\vert |\Eh_{(j+m)h}|-1\big\vert
\Big)\notag\\
& \leq C_n\frac{kh}{|t-s|^{1/2}}\Per(E_0)
+ C_n  |t-s|^{\sfrac12}\big(\Per(E_t) - 
\Per(E_s)\big)\notag\\
&\quad+ C_n  \frac{|t-s|^{1/2}}{\sqrt{h}} 
\Big(  
\big\vert|E_t|-1\big\vert - \big\vert|E_s|-1\big\vert
\Big).
\end{align}
Therefore, by \eqref{e:per decreasing} and \eqref{e:volume converging}, we get
\begin{align}
\vert\Eh_t\Delta\Eh_{s}\vert
& \leq  C_n\,|t-s|^{1/2}\Per(E_0),
\end{align}
where we used $kh\leq |t-s| + h\leq 2|t-s|$, thus concluding the proof of 
the proposition.
\end{proof}

\subsection{First variations and first consequences}
We now introduce the time-discrete normal velocity: for all $t\geq 0$ and $x 
\in \R^n$, we set
\[
\vh(t,x):=
\begin{cases}
\frac{1}{h}\sd_{\Eh_{t-h}}(x)& \mbox{for }t\in [h,+\infty),\\
0&\mbox{for }t\in [0,h).
\end{cases}
\]

\begin{lemma}[($L^2$-bound on the velocity)]\label{Lem:vel-L^2}
Let $\{\Eh_t\}_{t \geq 0}$ be an approximate flat flow.
Then  it holds
\begin{equation}\label{eq:L2-vh}
\int_0^{\infty}\int_{\partial \Eh_t}(\vh)^2\,d\Ha^{n-1}dt \leq C_{n,0}.
\end{equation}
\end{lemma}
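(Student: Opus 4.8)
The natural approach is to extract the $L^2$-bound on $\vh$ from the discrete dissipation inequality \eqref{10}, which already controls $\frac1h\int_{\Eh_t\Delta\Eh_{t-h}}\d_{\Eh_{t-h}}\,dx$ summed over time steps, cf.\ \eqref{e:extra1}. The point is to convert this $L^1$-type control of the signed distance into an $L^2$-control of $\vh=\frac1h\sd_{\Eh_{t-h}}$ on the boundary $\partial\Eh_t$. Since on $\partial\Eh_t$ one has $|\sd_{\Eh_{t-h}}|=\d_{\Eh_{t-h}}\le\gamma_n\sqrt h$ by the $L^\infty$-estimate \eqref{e:Linfty} (applied with $E=\Eh_t$, $F=\Eh_{t-h}$), we get $(\vh)^2=\frac1{h^2}\d_{\Eh_{t-h}}^2\le\frac{\gamma_n}{h^{3/2}}\d_{\Eh_{t-h}}$ pointwise on $\partial\Eh_t$. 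So it suffices to bound $\int_0^\infty\frac1{h^{3/2}}\int_{\partial\Eh_t}\d_{\Eh_{t-h}}\,d\Ha^{n-1}dt$, i.e.\ to pass from a volume integral over the thin region $\Eh_t\Delta\Eh_{t-h}$ to a surface integral over $\partial\Eh_t$ — losing essentially one power of $h$, which is compensated by the gap between the $h^{-1}$ weight available in \eqref{e:extra1} and the $h^{-3/2}$ weight we need, together with one more $\sqrt h$ coming from the width of the layer.

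First I would fix a time step $k\ge1$, write $E=\Eh_{kh}$, $F=\Eh_{(k-1)h}$, and estimate $\int_{\partial E}\d_F\,d\Ha^{n-1}$ by a coarea/layer-cake argument: cover $\partial E$ by finitely many balls $B_{\sqrt h}(x_i)$, $x_i\in\partial E$, with bounded overlap, using the density estimate \eqref{e:density area} to control their number by $C_n h^{-(n-1)/2}\Per(E)$. On each such ball, $\d_F\le\gamma_n\sqrt h$ and the perimeter of $E$ is at most $C_n h^{(n-1)/2}$; one then relates $\int_{\partial E\cap B}\d_F$ to the volume $|(E\Delta F)\cap \widehat B|$ over a slightly enlarged ball $\widehat B=B_{C\sqrt h}(x_i)$ by exploiting that points of $\partial E$ at signed distance $-s$ from $F$ (with $s\in(0,\gamma_n\sqrt h)$) force, via the density estimates \eqref{e:density vol} applied to balls of radius comparable to $s$ centered at such points, a definite amount of $E\Delta F$-mass nearby. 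Summing the ball-wise estimates and using bounded overlap gives
\[
\int_{\partial E}\d_F\,d\Ha^{n-1}\;\le\;\frac{C_n}{\sqrt h}\,|E\Delta F|,
\]
so that
\[
\frac1{h^{3/2}}\int_{\partial E}\d_F\,d\Ha^{n-1}\;\le\;\frac{C_n}{h^2}\,|E\Delta F|.
\]

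At this point a direct summation of $\frac1{h^2}|E\Delta F|$ over $k$ does \emph{not} follow from \eqref{e:extra2} (that only gives $\frac1h$). Instead one must keep the structure of \eqref{e:extra1}: replacing $|E\Delta F|$ by $\frac1{\gamma_n\sqrt h}\int_{E\Delta F}\d_F\,dx$ is the wrong direction, so the correct move is to bound $\frac1{h^{3/2}}\int_{\partial E}\d_F$ directly by $\frac1h\int_{E\Delta F}\frac{\d_F}{h}\,dx$ up to a dimensional constant — that is, to prove the layer estimate in the sharper form
\[
\frac1h\int_{\partial\Eh_{kh}}\d_{\Eh_{(k-1)h}}\,d\Ha^{n-1}\;\le\;C_n\int_{\Eh_{kh}\Delta\Eh_{(k-1)h}}\frac{\d_{\Eh_{(k-1)h}}}{h}\,dx.
\]
Granting this, summing over $k$ from $1$ to $\infty$ and invoking \eqref{e:extra1} with $T\to\infty$ yields
\[
\int_h^\infty\int_{\partial\Eh_t}(\vh)^2\,d\Ha^{n-1}dt
\le\gamma_n\int_h^\infty\frac1h\int_{\partial\Eh_t}\d_{\Eh_{t-h}}\,d\Ha^{n-1}dt
\le C_n\,\Per(E_0)=C_{n,0},
\]
and the contribution of $t\in[0,h)$ vanishes since $\vh\equiv0$ there.

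\textbf{Main obstacle.} The crux is the sharp layer inequality relating the surface integral of $\d_F$ over $\partial E$ to the weighted volume integral of $\d_F$ over $E\Delta F$ with the correct powers of $h$. The naive slicing bound loses a factor $h^{-1/2}$; recovering it requires using the density estimates of Corollary~\ref{c:density} at \emph{every} scale $s\in(0,\gamma_n\sqrt h)$ — i.e.\ that a boundary point of $E$ lying at distance $s$ from $F$ sits in a ball of radius $\sim s$ carrying volume $\gtrsim s^n$ of $E\Delta F$ — and then integrating in $s$ by a Fubini/coarea argument, so that the extra $\sqrt h$ in the denominator is genuinely paid for by the linear weight $\d_F$ inside the integral rather than by the crude width bound. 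Making the overlap of the covering family quantitatively harmless (Besicovitch or Vitali) and checking the centered-ball hypotheses of Corollary~\ref{c:density} at the small scales $s\le\gamma_n\sqrt h$ is the technical heart of the argument.
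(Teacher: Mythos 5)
Your reduction breaks at the very first step. Replacing $(\vh)^2=\d_{\Eh_{t-h}}^2/h^2$ on $\partial\Eh_t$ by the crude bound $\gamma_n h^{-3/2}\,\d_{\Eh_{t-h}}$ discards exactly the information that makes the lemma true: the estimate $\d_{\Eh_{t-h}}\le\gamma_n\sqrt h$ is sharp only where the layer $\Eh_t\Delta\Eh_{t-h}$ has maximal width, and elsewhere you overestimate $(\vh)^2$ by the factor $\gamma_n\sqrt h/\d_{\Eh_{t-h}}$. Consequently the ``sharper layer estimate'' you would then need is false in every form you write it. Test it on a flat slab, $F=\{x_n<0\}$, $E=\{x_n<\delta\}$ with $0<\delta\ll\sqrt h$ (the regime of moderate velocity $|\vh|=\delta/h$): per unit area one has $\int_{\partial E}\d_F\,d\Ha^{n-1}=\delta$ while $\int_{E\Delta F}\d_F\,dx=\delta^2/2$, so your displayed inequality $\int_{\partial E}\d_F\,d\Ha^{n-1}\le C_n\int_{E\Delta F}\d_F\,dx$ requires $1\le C_n\delta/2$, and even the weaker version actually needed to close your chain, $\int_{\partial E}\d_F\,d\Ha^{n-1}\le C_n h^{-1/2}\int_{E\Delta F}\d_F\,dx$, requires $\sqrt h\le C_n\delta$; both fail as $\delta\downarrow0$. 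No multi-scale use of Corollary~\ref{c:density} can rescue this, because the target inequality itself is wrong. (Your closing display also silently drops a factor $h^{-1/2}$ between its first and second members.)

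The correct single-step statement is $\int_{\partial E}\d_F^2\,d\Ha^{n-1}\le C_n\int_{E\Delta F}\d_F\,dx$ --- note the square on the left, which passes the slab test --- equivalently $\int_{\partial\Eh_t}(\vh)^2\,d\Ha^{n-1}\le \frac{C_n}{h}\int_{\Eh_t\Delta\Eh_{t-h}}|\vh|\,dx$, which then telescopes via \eqref{10} exactly as you intend. To prove it one must retain the local size of $\d_F$ throughout: this is precisely your ``every scale $s$'' remark, but it must be applied to $\d_F^2$ \emph{before}, not after, trading one factor of $\d_F$ for $\gamma_n\sqrt h$. The paper does this by stratifying $\partial\Eh_t$ into the dyadic sets $K(\ell)=\{2^\ell<|\vh|\le2^{\ell+1}\}$, applying the density estimates of Corollary~\ref{c:density} at the matching radius $2^{\ell-1}h$ (on such balls $|\vh|$ stays comparable to $2^\ell$ by the $1/h$-Lipschitz continuity of $\vh$), and running Besicovitch on each stratum separately. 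If you reorganize your covering so that the radius is $\sim\d_{\Eh_{t-h}}(x_0)$ rather than the fixed scale $\sqrt h$, you recover the paper's argument.
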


\begin{proof}
We fix $t\in[h,+\infty)$ and consider for every $\ell\in\Z$ with $2^\ell \leq 
\sfrac{\gamma_n}{\sqrt{h}}$ the sets
$$
K(\ell):=\{x\in\R^d:~2^{\ell}<\vert \vh (t,x)\vert\leq 2^{\ell+1}\},
$$
so that $\R^n= \bigcup_{\ell} K(\ell)$.
It follows from $2^{\ell-1} h \leq \gamma_n\sqrt{h}/2$ and from 
Corollary~\ref{c:density} that, for every $x \in \de \Eh_t$,
\begin{gather}
|\Eh_t\cap B_{2^{\ell-1}h}(x)|\;\ge\; c_n 
\left(2^{\ell-1}h\right)^n,\label{17}\\
\Ha^{n-1}(\partial\Eh_t\cap B_{2^{\ell-1}h}(x))\;\le\; C_n \left(2^{\ell 
-1}h\right)^{n-1}.\label{18}
\end{gather}
Using $2^{\ell-1}\le |\vh 
(t,y)| \le 4 \cdot2^{\ell-1}$ for all $y\in B_{2^{\ell-1} h}(x)$ with 
$x\in\partial \Eh_t\cap K(\ell)$, we obtain for every  $x\in\partial \Eh_t\cap 
K(\ell)$
\begin{gather*}
\int_{B_{2^{\ell-1} h}(x)\cap(\Eh_t\Delta \Eh_{t-h})}\vert \vh \vert\,dy
\stackrel{\eqref{17}}{\geq}
c_n \,2^{\ell-1}\left(2^{\ell-1}h\right)^n,\\
\int_{B_{2^{\ell-1}h }(x)\cap\partial \Eh_t}( \vh 
)^2\,d\Ha^{n-1}\;\stackrel{\eqref{18}}{\le}\; C_n (2^{\ell-1})^2 
\left(2^{\ell-1} h\right)^{n-1}.
\end{gather*}
Hence, combining these two estimates, we have
$$
\int_{B_{2^{\ell-1} h}(x)\cap\partial \Eh_t}( \vh )^2\,d\Ha^{n-1}\;\le\; 
\frac{C_n}{ h}\int_{B_{2^{\ell-1} h}(x)\cap(\Eh_t\Delta \Eh_{(k-1)h})}\vert \vh 
\vert\,dy.
$$
Now, by a simple application of Besicovitch's covering theorem 
\cite[Ch.\ 1.5.2]{EvansGariepy} to $\{ B_{2^{\ell-1} h}(x):~x\in\partial 
\Eh_t\cap K(\ell)\}$, we obtain 
\begin{equation}\label{eq:nando}
\int_{\de \Eh_t \cap K(\ell)} \big(\vh\big)^2 \, d\Ha^{n-1} \leq \frac{C_n}{h} 
\int_{(\Eh_t \Delta \Eh_{t-h})\cap \{2^{\ell-2}\leq|\vh|\leq 
2^{\ell+2}\}}|\vh|\,dx.
\end{equation}
Finally, summing up over $\ell\in\Z$ with $2^{\ell}\le \frac{\gamma_n}{\sqrt 
h}$ in \eqref{eq:nando} yields
\[
\int_{\partial \Eh_t}( \vh )^2\,d\Ha^{n-1}\;\le\; \frac{C_n}{h} 
\int_{\Eh_{t}\Delta\Eh_{t-h}}\vert \vh \vert\,dx.
\]

We now show how the above estimate implies \eqref{eq:L2-vh}. In view of 
\eqref{10} we have
\begin{eqnarray*}
\lefteqn{\int_{\partial \Eh_t}( \vh )^2\,d\Ha^{n-1}}\\
&\le&\frac{C_n}{h}\left( \Per(\Eh_{t-h}) + \frac{1}{\sqrt{h}}||\Eh_{t-h}|-1| - 
\Per(\Eh_{t}) - \frac{1}{\sqrt{h}}||\Eh_{t}|-1|\right).
\end{eqnarray*}
Integrating in time and using $|E_0|=1$, we obtain
\begin{eqnarray*}
\lefteqn{\int_0^{T}\int_{\partial \Eh_t}( \vh )^2\,d\Ha^{n-1}dt}\\
&\leq&C_n \left( \Per(\Eh_{0})  - \Per(\Eh_{T}) - 
\frac{1}{\sqrt{h}}||\Eh_{T}|-1|\right)
\\
&\leq& C_n \Per(E_{0}),
\end{eqnarray*}
from which, by a simple limit for $T \to +\infty$, \eqref{eq:L2-vh} follows.
\end{proof}

We now derive the Euler--Lagrange equations which constitute the weak motion law 
for the time-discrete evolution.

\begin{lemma}[(Euler--Lagrange equations)]\label{lem:EL}
For every $t\in[h,+\infty)$ and $\Psi\in C^1_c(\R^n;\R^n)$, it holds
\begin{equation}\label{eq:EL-sing-h}
\int_{\partial \Eh_t}\left(\mathrm{div}_{\partial  \Eh_t}\Psi+ \vh 
\nu_{\Eh_t}\cdot\Psi\right)\,d\Ha^{n-1}
\;=\;\lambda^{(h)}(t)
\int_{\partial \Eh_t}\nu_{\Eh_t}\cdot\Psi\,d\Ha^{n-1},
\end{equation}
where
\begin{equation}\label{eq1}
\lambda^{(h)}(t):=\frac{1}{\Ha^{n-1}(\partial \Eh_t)}
\int_{\partial \Eh_t}\left(H_{ \Eh_t}+ \vh\right)\,d\Ha^{n-1}.
\end{equation}
Moreover, if $\vert\Eh_t\vert\neq 1$, then it also holds
$\lambda^{(h)}(t)=\frac{1}{\sqrt{h}}\,\mathrm{sgn}(1-\vert\Eh_t\vert)$.
\end{lemma}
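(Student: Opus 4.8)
The plan is to derive \eqref{eq:EL-sing-h} by performing an inner variation of the functional $\Fh(\,\cdot\,,\Eh_{t-h})$ at its minimizer $\Eh_t$, and then to identify the multiplier term separately. Concretely, I would fix $\Psi\in C^1_c(\R^n;\R^n)$, let $\{\phi_\eps\}_{|\eps|<\eps_0}$ be the flow of diffeomorphisms generated by $\Psi$ (so $\frac{d}{d\eps}\phi_\eps|_{\eps=0}=\Psi$, $\phi_0=\Id$), and set $E_\eps:=\phi_\eps(\Eh_t)$. Since $\Eh_t$ minimizes $E\mapsto\Fh(E,\Eh_{t-h})$, the function $\eps\mapsto\Fh(E_\eps,\Eh_{t-h})$ has a minimum at $\eps=0$, so if it is differentiable there, its derivative vanishes. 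I would therefore compute the derivative of each of the three terms.

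For the perimeter term the classical first-variation formula gives $\frac{d}{d\eps}\Per(E_\eps)|_{\eps=0}=\int_{\partial^*\Eh_t}\div_{\partial\Eh_t}\Psi\,d\Ha^{n-1}$. For the volume-penalization term, since $\frac{d}{d\eps}|E_\eps||_{\eps=0}=\int_{E_\eps}\div\Psi\,dx|_{\eps=0}=\int_{\partial^*\Eh_t}\nu_{\Eh_t}\cdot\Psi\,d\Ha^{n-1}$, the chain rule applied to $s\mapsto\frac1{\sqrt h}|s-1|$ gives $\frac1{\sqrt h}\,\sgn(|\Eh_t|-1)\int_{\partial^*\Eh_t}\nu_{\Eh_t}\cdot\Psi\,d\Ha^{n-1}$ whenever $|\Eh_t|\neq1$ (where the outer function is differentiable); the case $|\Eh_t|=1$ needs the one-sided-derivative remark below. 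For the distance term I would use the change of variables $\int_{E_\eps}\sd_{\Eh_{t-h}}\,dx=\int_{\Eh_t}\sd_{\Eh_{t-h}}(\phi_\eps(y))\,|\det D\phi_\eps(y)|\,dy$ and differentiate under the integral: this produces $\int_{\Eh_t}\big(\grad\sd_{\Eh_{t-h}}\cdot\Psi+\sd_{\Eh_{t-h}}\div\Psi\big)\,dy=\int_{\Eh_t}\div\big(\sd_{\Eh_{t-h}}\Psi\big)\,dy=\int_{\partial^*\Eh_t}\sd_{\Eh_{t-h}}\,\nu_{\Eh_t}\cdot\Psi\,d\Ha^{n-1}$, using that $\sd_{\Eh_{t-h}}$ is Lipschitz hence $W^{1,1}_{\loc}$ and the Gauss--Green theorem on sets of finite perimeter. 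Dividing by $h$ and recalling $\vh(t,\cdot)=\frac1h\sd_{\Eh_{t-h}}$, the distance term contributes $\int_{\partial^*\Eh_t}\vh\,\nu_{\Eh_t}\cdot\Psi\,d\Ha^{n-1}$. Adding the three contributions and setting the sum to zero yields
\[
\int_{\partial\Eh_t}\big(\div_{\partial\Eh_t}\Psi+\vh\,\nu_{\Eh_t}\cdot\Psi\big)\,d\Ha^{n-1}
=\mu\int_{\partial\Eh_t}\nu_{\Eh_t}\cdot\Psi\,d\Ha^{n-1}
\]
with $\mu=\frac1{\sqrt h}\sgn(1-|\Eh_t|)$ when $|\Eh_t|\neq1$; by the regularity recalled after Lemma~\ref{L1} the singular set of $\partial\Eh_t$ is $\Ha^{n-1}$-negligible, so $\partial\Eh_t$ and $\partial^*\Eh_t$ may be used interchangeably in the integrals. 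This already proves the last sentence of the lemma. To get \eqref{eq:EL-sing-h}--\eqref{eq1} in general, I would choose $\Psi=\nu_{\Eh_t}$-test fields: taking $\Psi$ with $\int_{\partial\Eh_t}\nu_{\Eh_t}\cdot\Psi\,d\Ha^{n-1}\neq0$ and using the definition \eqref{e:generalized mean-curv} of $H_{\Eh_t}$ to rewrite $\int\div_{\partial\Eh_t}\Psi=\int H_{\Eh_t}\nu_{\Eh_t}\cdot\Psi$, the identity forces $\mu$ to equal $\frac{1}{\Ha^{n-1}(\partial\Eh_t)}\int_{\partial\Eh_t}(H_{\Eh_t}+\vh)\,d\Ha^{n-1}$; calling this common value $\lambda^{(h)}(t)$ gives \eqref{eq1}, and the case $|\Eh_t|=1$ of the last sentence is then subsumed.

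The main obstacle is the non-differentiability of $s\mapsto|s-1|$ at $s=1$, i.e.\ the case $|\Eh_t|=1$. There, the outer variation only gives a one-sided inequality: for each admissible $\Psi$ one has $\frac{d}{d\eps^+}\Fh(E_\eps,\Eh_{t-h})|_{\eps=0^+}\ge0$, which reads
\[
\int_{\partial\Eh_t}\big(\div_{\partial\Eh_t}\Psi+\vh\,\nu_{\Eh_t}\cdot\Psi\big)\,d\Ha^{n-1}
+\frac1{\sqrt h}\Big|\int_{\partial\Eh_t}\nu_{\Eh_t}\cdot\Psi\,d\Ha^{n-1}\Big|\ge0;
\]
applying this to $\Psi$ and to $-\Psi$ shows that the linear functional $\Psi\mapsto\int_{\partial\Eh_t}(\div_{\partial\Eh_t}\Psi+\vh\,\nu_{\Eh_t}\cdot\Psi)\,d\Ha^{n-1}$ is bounded in absolute value by $\frac1{\sqrt h}|\int_{\partial\Eh_t}\nu_{\Eh_t}\cdot\Psi\,d\Ha^{n-1}|$, hence factors through the scalar $\int_{\partial\Eh_t}\nu_{\Eh_t}\cdot\Psi\,d\Ha^{n-1}$ and equals $\lambda^{(h)}(t)\int_{\partial\Eh_t}\nu_{\Eh_t}\cdot\Psi\,d\Ha^{n-1}$ for a unique constant $\lambda^{(h)}(t)$ with $|\lambda^{(h)}(t)|\le\frac1{\sqrt h}$; identifying this constant as in \eqref{eq1} via the definition of $H_{\Eh_t}$ closes the argument. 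The only other technical point worth care is justifying differentiation under the integral sign for the distance term — controlled by the Lipschitz bound $|\grad\sd_{\Eh_{t-h}}|\le1$ a.e., the smooth dependence of $\phi_\eps$ and $\det D\phi_\eps$ on $\eps$, and the compact support of $\Psi$ — and invoking the structure theorem for sets of finite perimeter so that all boundary integrals are over $\partial^*\Eh_t$ with $\nu_{\Eh_t}$ the measure-theoretic outer normal.
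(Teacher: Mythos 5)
Your argument is correct, and for $|\Eh_t|\neq 1$ it coincides with the paper's: a first variation of $\Fh(\cdot,\Eh_{t-h})$ along the flow of $\Psi$, with the perimeter term giving $\int\div_{\partial\Eh_t}\Psi$, the distance term giving $\int \vh\,\nu_{\Eh_t}\cdot\Psi$ via Gauss--Green, and the penalization differentiating to $h^{-1/2}\sgn(|\Eh_t|-1)\int\nu_{\Eh_t}\cdot\Psi$; the identification of $\lambda^{(h)}(t)$ as the average \eqref{eq1} is obtained in both texts by testing with fields approximating $\nu_{\Eh_t}$ on $\partial\Eh_t$. Where you genuinely diverge is the non-differentiable case $|\Eh_t|=1$: the paper observes that a unit-volume minimizer of $\Fh$ is in particular a minimizer of $\Per(F)+h^{-1}\int_{F\Delta\Eh_{t-h}}\d_{\Eh_{t-h}}\,dx$ under the hard constraint $|F|=1$ and then invokes the Lagrange multiplier attached to that constraint (which, spelled out, requires constructing volume-preserving variations), whereas you stay with the unconstrained penalized functional, use nonnegativity of the one-sided derivative at the minimum applied to $\pm\Psi$ to get $|L(\Psi)|\le h^{-1/2}\,\big|\int_{\partial\Eh_t}\nu_{\Eh_t}\cdot\Psi\,d\Ha^{n-1}\big|$ for the first-variation functional $L$, and conclude by the elementary factoring of linear functionals that $L=\lambda^{(h)}(t)\int\nu_{\Eh_t}\cdot\Psi$. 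This is a clean, self-contained substitute for the constrained Lagrange-multiplier step, and it yields the extra information $|\lambda^{(h)}(t)|\le h^{-1/2}$ when $|\Eh_t|=1$, which complements the exact value $h^{-1/2}$ in the other case and is consistent with Corollary~\ref{prop:soft2hard}. Two minor points of wording: the identification of $\lambda^{(h)}(t)$ requires genuinely letting $\Psi$ approximate $\nu_{\Eh_t}$ in $L^2(\Ha^{n-1}\res\partial\Eh_t)$, not merely choosing one $\Psi$ with $\int\nu_{\Eh_t}\cdot\Psi\neq 0$ (your phrase about ``$\nu_{\Eh_t}$-test fields'' presumably means this); and your closing remark about the ``case $|\Eh_t|=1$ of the last sentence'' should instead refer to \eqref{eq1}, since the lemma's last sentence only concerns $|\Eh_t|\neq 1$.
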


As we shall see in the proof below, the constants $\lambdah(t)$ defined in
\eqref{eq1} are Lagrange multipliers corresponding to the volume constraint,
whenever it is active. Since this constraint is satisfied up to a 
finite number of times (uniformly in $h$) by Corollary \ref{prop:soft2hard} 
below, by a slight abuse of terminology, we call $\lambdah(t)$ a {\em Lagrange 
multiplier}, even if the volume constraint is not active.

\begin{proof}
If $\vert\Eh_t\vert\neq 1$, it is very simple to compute the variations of 
$\Fh(\cdot,\Eh_{t-h})$ along the vector field $\Psi\in C^1_c(\R^n;\R^n)$
and see that they are given by \eqref{eq:EL-sing-h} 
with $\lambda^{(h)}(t)=\frac{1}{\sqrt{h}}\,\mathrm{sgn}(1-\vert\Eh_t\vert)$.
In the case $\vert\Eh_t\vert= 1$, we have 
\begin{gather*}
\Eh_t\in\argmin\left\{\Per(F)+\int_{F\Delta\Eh_{t-h}}\d_{ \Eh_{t-h}}\,dx:~\vert 
F\vert=1\right\}.
\end{gather*}
Hence,  performing variations of
$$
\Per(F)+\int_{F\Delta\Eh_{t-h}}\d_{ \Eh_{t-h}}\,dx
$$
within the class of sets of unit volume, for every $\Psi\in C^1_c(\R^n;\R^n)$, 
we again find \eqref{eq:EL-sing-h}, where $\lambda^{(h)}(t)$ is the Lagrange 
multiplier related 
to the constraint $\vert F\vert=1$. Observe that in both cases, 
we can  choose a sequence of
$\Psi\in C^1_c(\R^n;\R^n)$ approximating $\nu_{\Eh_t}$ 
on $\partial \Eh_t$, and conclude that $\lambda^{(h)}(t)$ is given by \eqref{eq1}.
\end{proof}

It is now clear that the regular part of $\de^* \Eh_t$ is of class $C^{2,\kappa}$. Indeed, by
choosing a suitable system of coordinates, $\de^* \Eh_t$ can be written in a neighbourhood
of any regular point as the graph of a $C^{1,\kappa}$ function $u$ solving the following equation
in the sense of distributions:
\[
\div\left( \frac{\nabla u}{\sqrt{1+|\nabla u|^2}}\right) = v^{(h)} - \lambda^{(h)}.
\]
Since $v^{(h)}$ is Lipschitz continuous, by standard elliptic regularity theory (cp., e.g., \cite{GilbargTrudinger}), 
one then deduce that $u \in C^{2,\kappa}$ for every $\kappa \in (0, 1)$.

\medskip

Next we prove that the whole family of sets defining the discrete flow up to time $T>0$ is contained in a large ball, 
whose radius does not depend on the discrete time-step $h$ but may depend on the $T$. 

\begin{lemma}[(Boundedness of minimizing sets)] \label{lem:ESP}
Let $\{\Eh_t\}_{t \geq 0}$ be an approximate solution and let $T>0$.
Then there exist $h_0, R_T>0$ (depending on $T$, $n$, and $E_0$ only)
such that, if $h\leq h_0$, then $\Eh_t\subset B_{R_T}$ for all $t\in[0,T]$.
\end{lemma}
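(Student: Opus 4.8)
The plan is to combine the $C^{1/2}$-in-time estimate of Proposition~\ref{p:hoelder} with a containment estimate at the level of a single minimization step, exploiting the $L^\infty$-bound of Proposition~\ref{p:Linfty}. The key observation is that by Proposition~\ref{p:Linfty}, for a single minimizer $E$ of $\Fh(\tacka,F)$ one has $\sup_{E\Delta F}\d_F\le\gamma_n\sqrt h$; in particular, if $F\subset B_\rho$ then every point of $E\setminus F$ lies within distance $\gamma_n\sqrt h$ of $B_\rho$, so $E\subset B_{\rho+\gamma_n\sqrt h}$. Iterating this over $k=[T/h]$ steps would give $\Eh_t\subset B_{\rho_0+\gamma_n\sqrt h\,[T/h]}$, but the radius growth $\sqrt h\cdot(T/h)=T/\sqrt h$ blows up as $h\downarrow0$, so a naive iteration is \emph{not} enough. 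This is the main obstacle, and it is overcome by instead controlling the diameter through the volume of the symmetric difference rather than through the raw $L^\infty$ distance accumulated step by step.

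First I would record the $L^1$-continuity bound: by Proposition~\ref{p:hoelder}, $|\Eh_t\Delta\Eh_s|\le C_{n,0}|t-s|^{1/2}$, and in particular $|\Eh_t\Delta E_0|\le C_{n,0}\,T^{1/2}$ for all $t\in[0,T]$. Since $|E_0|=1$, this gives a uniform (in $h$ and in $t\le T$) bound $|\Eh_t|\le 1+C_{n,0}T^{1/2}=:V_T$. Next, suppose for contradiction that some $\Eh_t$ with $t\le T$ is not contained in a ball $B_{R_T}$ to be chosen. By the density estimate \eqref{e:density vol} of Corollary~\ref{c:density}, for any $x_0\in\partial\Eh_t$ and any $r\in(0,\gamma_n\sqrt h)$ we have $|\Eh_t\cap B_r(x_0)|\ge c_n r^n$. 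If $\Eh_t$ reaches far outside $B_{R_0}$ (with $R_0$ chosen so that $B_{R_0}\supset E_0$), then a chain of such disjoint small balls of radius $\sim\gamma_n\sqrt h$ running from near $\partial B_{R_0}$ out to the far point forces $|\Eh_t|\gtrsim c_n(\gamma_n\sqrt h)^n\cdot(\text{number of balls})$; but the number of balls needed to cross a shell of width $d$ is $\gtrsim d/(\gamma_n\sqrt h)$, which again degenerates. So the correct argument is more global: I would instead compare $\Eh_t$ with the competitor $\Eh_t\cap B_R$ in the minimization defining $\Eh_{kh}=\Eh_t$, i.e.\ use $\Fh(\Eh_{kh},\Eh_{(k-1)h})\le\Fh(\Eh_{kh}\cap B_R,\Eh_{(k-1)h})$, where $B_R\supset\Eh_{(k-1)h}$ (which exists by the inductive hypothesis applied at the previous step together with the $L^1$-bound). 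Cutting off the part of $\Eh_{kh}$ outside $B_R$ removes perimeter at least $\Per(\Eh_{kh}\setminus B_R)-\Ha^{n-1}(\partial B_R\cap\Eh_{kh})$ by the isoperimetric-type inequality, while it changes the distance term by $+\frac1h\int_{\Eh_{kh}\setminus B_R}\sd_{\Eh_{(k-1)h}}$ and the volume term by at most $\frac1{\sqrt h}|\Eh_{kh}\setminus B_R|$; since $\Eh_{(k-1)h}\subset B_R$, on $\Eh_{kh}\setminus B_R$ we have $\sd_{\Eh_{(k-1)h}}\ge\dist(\cdot,B_R)$, so for $R$ large the distance term dominates and forces $|\Eh_{kh}\setminus B_R|=0$ unless $\Per$ decreases; combined with the perimeter bound $\Per(\Eh_{kh})\le\Per(E_0)$ from \eqref{e:per decreasing}, one gets an absolute bound on how far $\Eh_{kh}$ can stick out beyond $B_R$ in terms of $\Per(E_0)$, $h$, independent of $k$.

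Concretely, the single-step claim I would prove is: if $F\subset B_R$ and $E$ minimizes $\Fh(\tacka,F)$, then $E\subset B_{R+\Lambda}$ where $\Lambda$ depends only on $n$ and $\Per(E_0)$ (not on $h$ or $R$). Indeed, for $\rho>R$ set $g(\rho)=|E\setminus B_\rho|$; using $E\cap B_\rho$ as competitor and the coarea/isoperimetric inequality one derives a differential inequality of the form $-g'(\rho)\ge c_n g(\rho)^{(n-1)/n}$ on the set where $\frac1h\dist(\rho,R)\gtrsim h^{-1/2}$, i.e.\ for $\rho\ge R+\sqrt h$, after absorbing the volume term; but more robustly, since every ball $B_r(x_0)$ centered at $x_0\in\partial E$ with $x_0\notin B_R$ and $r\le\dist(x_0,B_R)$ lies in $E^c$ near $F$ and one can run exactly the contradiction argument of Proposition~\ref{p:Linfty} (with $F$ there replaced by the trivial one-sided comparison), one finds $\dist(x_0,B_R)\le\gamma_n\sqrt h$ is \emph{not} the right output — rather, the perimeter decrease $\Per(E)\le\Per(E\cap B_\rho)+\text{(lower order)}$ together with $\Per(E)\le\Per(E_0)$ caps the total "outside mass" so that $\int_R^\infty\Ha^{n-1}(\partial B_\rho\cap E)\,d\rho=|E\setminus B_R|\le C_n h\,\Per(E_0)^{?}$; choosing the final radius $R_T=R_0+\Lambda$ with $\Lambda$ independent of $h\le h_0$ and then iterating at most $[T/h]$ times but with \emph{no growth per step} (each step's minimizer stays in the same enlarged ball because $\Eh_{(k-1)h}\subset B_{R_T}$ already and the single-step claim gives $\Eh_{kh}\subset B_{R_T+\Lambda}$ — so actually one takes $R_T$ large enough that $R_T+\Lambda$ is still $\le R_T$ is impossible; the resolution is that $\Lambda=\Lambda(h)=\gamma_n\sqrt h$ and one chooses $h_0$ so small that $[T/h_0]\gamma_n\sqrt{h_0}$... this still diverges). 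Given this genuine difficulty, the clean route — and the one I would ultimately write — is: use the global $L^1$-bound $|\Eh_t|\le V_T$ plus the lower density bound \eqref{e:density vol} to see that $\partial\Eh_t$ cannot be "spread out", namely $\Eh_t$ is contained in the $\gamma_n\sqrt h$-neighborhood of a set of at most $N\le V_T/(c_n(\gamma_n\sqrt h/2)^n)$ disjoint balls of radius $\gamma_n\sqrt h/2$; but connectivity of the discrete flow in time (each $\Eh_{kh}$ differs from $\Eh_{(k-1)h}$ by volume $\le C_{n,0}h^{1/2}$, hence for $h$ small the "new" components have tiny volume, contradicting the density lower bound which forces any component to have volume $\ge c_n(\gamma_n\sqrt h)^n\gg h^{1/2}$ when... again scaling fails). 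The honest statement of the main obstacle is therefore: \textbf{controlling the radius uniformly in $h$ requires a mechanism beyond one-step estimates}, and the mechanism is the comparison with a large fixed ball combined with \emph{both} the perimeter monotonicity \eqref{e:per decreasing} \emph{and} the uniform volume bound $|\Eh_t|\le V_T$: one shows that an unbounded excursion of total perimeter bounded by $\Per(E_0)$ and total volume bounded by $V_T$ into the region $\{\dist(\cdot,B_{R_0})\ge\delta\}$ costs, via the distance penalty $\frac1h\int_E\sd_F$, an energy $\ge\frac\delta h|E\setminus B_{R_0+\delta}|$ that cannot be compensated; since the energy is monotone decreasing along the whole discrete flow and bounded by $\Per(E_0)$, summing over all steps up to $T$ yields $\frac1h\sum_k\int_{\Eh_{kh}\setminus B_{R_0}}\sd_{\Eh_{(k-1)h}}\le\Per(E_0)$, i.e.\ a \emph{spacetime} bound that, divided by the number of steps $[T/h]$, does \emph{not} degenerate and pins down $R_T$ in terms of $n$, $T$, $\Per(E_0)$ and $|E_0|$. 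I would make this precise by choosing $R_T$ so that the geometric series $\sum_{j\ge1}$ of shell contributions telescopes, and by taking $h_0$ small enough that the volume-penalty term $\frac1{\sqrt h}||\Eh_t|-1|\le\Per(E_0)$ (from \eqref{e:volume converging}) does not interfere; the bulk of the write-up is this comparison-with-a-ball computation, which is routine once the right quantity — the running distance energy summed in time — is identified.
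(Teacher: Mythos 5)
Your write-up candidly cycles through several strategies and discards most of them, but the mechanism you finally settle on --- the summed dissipation bound $\frac1h\sum_k\int_{\Eh_{kh}\Delta \Eh_{(k-1)h}}\d_{\Eh_{(k-1)h}}\,dx\le\Per(E_0)$ --- is quantitatively insufficient, so there is a genuine gap. To see why: if at step $k$ the set protrudes a distance $\delta_k$ beyond the previous one at some point, the density estimates of Corollary~\ref{c:density} convert this into a dissipation cost of order $\delta_k^{n+1}/h$ (a ball of radius $\sim\delta_k$ of new volume, each point at distance $\sim\delta_k$). The spacetime bound therefore only gives $\sum_k\delta_k^{n+1}\le C\,h\,\Per(E_0)$, whereas bounding the radius requires $\sum_k\delta_k\le C_T$. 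By H\"older over the $[T/h]$ steps this yields $\sum_k\delta_k\le C\,T^{n/(n+1)}h^{(1-n)/(n+1)}$, which diverges as $h\downarrow0$ for every $n\ge2$. In other words, the dissipation controls an $L^{n+1}$-type norm of the per-step excursions when what is needed is an $L^1$ bound, and no choice of $R_T$ or telescoping over shells repairs this.

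The idea you are missing is to use the Euler--Lagrange equation of Lemma~\ref{lem:EL} pointwise at the outermost boundary point, together with a bound on the Lagrange multiplier. Setting $r_t:=\inf\{r:\Eh_t\subset B_r\}$, every point of $\bar B_{r_t}\cap\partial\Eh_t$ is regular and has $H_{\Eh_t}\ge0$ (the boundary touches the enclosing sphere from inside), so $\vh(t,\cdot)=\lambdah(t)-H_{\Eh_t}\le|\lambdah(t)|$ there; since $\vh=h^{-1}\sd_{\Eh_{t-h}}$, this gives the one-step estimate $r_t\le r_{t-h}+h\,|\lambdah(t)|$, i.e.\ the outward drift per step is governed by the multiplier, not by the worst-case $\gamma_n\sqrt h$. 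The multiplier is then controlled by testing \eqref{eq:EL-sing-h} with a vector field equal to $x$ on $B_{r_t}$ (using $|\Eh_t|\ge\tfrac12$ from \eqref{e:volume converging}), which yields $|\lambdah(t)|\le C_n\Per(\Eh_t)+C_n r_t\Per(\Eh_t)^{1/2}\|\vh(t,\cdot)\|_{L^2(\partial\Eh_t)}$; combining with \eqref{e:per decreasing}, the $L^2$ velocity bound of Lemma~\ref{Lem:vel-L^2}, and Cauchy--Schwarz in time gives $\int_0^\tau|\lambdah|\,dt\le C_{n,0}\tau+C_{n,0}\bigl(\int_0^\tau r_t^2\,dt\bigr)^{1/2}$, and a Gronwall argument closes the loop. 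This interplay between the first variation at the extremal point and the integrability of $\lambdah$ is the content of the paper's proof and is not recoverable from the perimeter monotonicity, the volume bound, or the dissipation estimate alone.
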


\begin{proof} We fix $h>0$, and for every $t\in[0,T)$ we let
$$
r_{t}:=\inf\{r>0:~\Eh_t\subset B_r \}.
$$ 
We notice ${\bar B_{r_t}}\cap\partial\Eh_t\neq \emptyset$ is made of regular points
(because there are no singular minimizing cones contained in a half space, cp.~\cite[Theorem~15.5]{Giusti}),
and moreover
$$
{\bar B_{r_t}}\cap\partial\Eh_t\subset\left\{y\in\partial\Eh_t:\:H_{ \Eh_t}(y)\ge0\right\}.
$$
By this observation and the Euler--Lagrange equation
$\vh (t,y)= \lambdah(t) - H_{\Eh_t}(y)$, it follows that
\[
r_{t} \leq r_{t-h} + h\,|\lambdah(t)|.
\]
Iterating the above estimate, we then deduce that
\begin{equation}\label{20}
r_\tau \leq r_0 + \int_0^{\tau} |\lambdah(t)|\, dt \quad \forall\;\tau \in [0,T].
\end{equation}
To get some control on $\lambdah(t)$ we consider  $\Psi\in C_c^1(\R^n;\R^n)$  such that $\Psi(x)=x$ in $B_{r_t}$, 
and, since for $h$ small enough $|\Eh_t|\ge\frac12$ by \eqref{e:volume converging},
using $\Psi$ as test in \eqref{eq:EL-sing-h}, we obtain 
\begin{align}
\frac{n}2 |\lambdah(t)| &\leq
\left|\lambdah(t)\int_{\Eh_t} \div \Psi\, dx\right|
=\left| \lambdah(t)\int_{\partial\Eh_t} \nu_{\Eh_t}\cdot\Psi\, d\Ha^{n-1}\right|\notag\\
&= \left|\int_{\partial \Eh_t} \left(\div_{\partial \Eh_t} \Psi + \vh  \nu_{\Eh_t}\cdot\Psi\right)\, d\Ha^{n-1}\right|\notag\\
& \leq (n-1) \Per(\Eh_t) + r_t \Per(\Eh_t)^{1/2} \|\vh (t,\tacka)\|_{L^2(\partial \Eh_t)},\label{eq:boia-deh}
\end{align}
where we used $|\Psi|\le r_t$ on $\partial \Eh_t$.
Integrating in time and using the Cauchy--Schwarz inequality
together with \eqref{e:per decreasing} and \eqref{eq:L2-vh}, we obtain
\begin{equation}\label{21}
\int_0^{\tau} |\lambdah(t)|\, dt \leq  C_{n,0}{\tau} + C_{n,0}\left(\int_0^{\tau} r_t^2\, dt\right)^{1/2}.
\end{equation}
Combining \eqref{20} and \eqref{21}, it follows that
\begin{equation}\label{22}
r_{\tau}\leq r_0 + C_{n,0}{\tau} + C_{n,0}\left(\int_0^{\tau} r_t^2\, dt\right)^{1/2} \quad\mbox{for all 
$\tau\in[0,T]$}.
\end{equation}
The remainder of the proof is a standard ODE argument. Indeed, squaring both sides of the equation and redefining 
$C_{n,0}$ yields
\[
\frac{d}{d\tau} \left(e^{-C_{n,0}\tau}F(\tau)\right)\;\le\;C_{n,0} e^{-C_{n,0}\tau}\left(r_0^2 + 
\tau^2\right)\quad\mbox{for all $\tau\in[0,T]$},
\]
where $F(\tau) = \int_0^{\tau} r_t^2\, dt$. Integration in $\tau$ over the interval $[0,T]$ yields
\[
\int_0^{T} r_t^2\, dt\;\le\;C_{n,0,T} ,
\]
and thus the statement in Lemma \ref{lem:ESP} follows via \eqref{22}.
\end{proof}

\begin{corollary}\label{lem:bebysitter}
For every $h>0$ small enough, it holds
$$
\int_0^T \int_{\de \Eh_t}H^2_{\Eh_t}\, d\Ha^{n-1}dt+
\int_0^T\vert \lambda^{(h)}(t)\vert^2\,dt\;\leq\;C_{n,0,T}.
$$
\end{corollary}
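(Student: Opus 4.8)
The plan is to reduce the statement to the two a priori bounds already in hand — the $L^2$-estimate on the discrete velocity (Lemma~\ref{Lem:vel-L^2}) and the uniform boundedness of the minimizing sets (Lemma~\ref{lem:ESP}) — via the pointwise Euler--Lagrange identity
\[
H_{\Eh_t}(y)\;=\;\lambdah(t)-\vh(t,y)\qquad\text{on }\partial\Eh_t,
\]
which is just \eqref{eq:EL-sing-h} tested against vector fields approximating $\nu_{\Eh_t}$, as in the proof of Lemma~\ref{lem:EL}. So I would treat the two summands separately, handling the Lagrange multiplier first.

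Fix $h\le h_0$, so that Lemma~\ref{lem:ESP} gives $\Eh_t\subset B_{R_T}$ and hence $r_t\le R_T$ for all $t\in[0,T]$, where $r_t=\inf\{r>0:\Eh_t\subset B_r\}$ as in that proof. (For $h$ small, $|\Eh_t|\ge\frac12$ by \eqref{e:volume converging}, which is also needed there.) Testing \eqref{eq:EL-sing-h} with $\Psi$ satisfying $\Psi(x)=x$ on $B_{r_t}$ yields, exactly as in \eqref{eq:boia-deh},
\[
\frac n2\,|\lambdah(t)|\;\le\;(n-1)\,\Per(\Eh_t)+r_t\,\Per(\Eh_t)^{1/2}\,\|\vh(t,\tacka)\|_{L^2(\partial\Eh_t)}.
\]
Using $\Per(\Eh_t)\le\Per(E_0)$ from \eqref{e:per decreasing}, $r_t\le R_T$, the elementary bound $(a+b)^2\le 2a^2+2b^2$, and then integrating in $t$ over $[0,T]$,
\[
\int_0^T|\lambdah(t)|^2\,dt\;\le\;C_n\,T\,\Per(E_0)^2+C_n\,R_T^2\,\Per(E_0)\int_0^T\|\vh(t,\tacka)\|_{L^2(\partial\Eh_t)}^2\,dt\;\le\;C_{n,0,T},
\]
the last inequality by Lemma~\ref{Lem:vel-L^2}.

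For the curvature term, the Euler--Lagrange identity and $(a+b)^2\le 2a^2+2b^2$ give
\[
\int_{\partial\Eh_t}H_{\Eh_t}^2\,d\Ha^{n-1}\;\le\;2\,|\lambdah(t)|^2\,\Per(\Eh_t)+2\int_{\partial\Eh_t}(\vh)^2\,d\Ha^{n-1},
\]
and integrating in time, again bounding $\Per(\Eh_t)\le\Per(E_0)$, invoking the $L^2_t$-bound on $\lambdah$ just proved and Lemma~\ref{Lem:vel-L^2} once more, yields $\int_0^T\int_{\partial\Eh_t}H_{\Eh_t}^2\,d\Ha^{n-1}dt\le C_{n,0,T}$. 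Adding the two estimates completes the proof.

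I do not expect any genuine obstacle: this is essentially bookkeeping built on top of the machinery assembled for Lemma~\ref{lem:ESP}. The only point requiring care is that the inequality \eqref{eq:boia-deh} is only meaningful once $r_t$ is finite and uniformly bounded on $[0,T]$ — this is precisely why the qualifier ``$h$ small enough'' ($h\le h_0$) is inherited from Lemma~\ref{lem:ESP}, and it should be stated explicitly at the outset of the argument.
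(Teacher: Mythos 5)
Your proof is correct and follows the same route as the paper: the paper's own (very terse) argument is precisely to bound $\lambda^{(h)}$ in $L^2_t$ via \eqref{eq:boia-deh} together with Lemma~\ref{lem:ESP}, and then to control the curvature term through the first variation identity $H_{\Eh_t}=\lambdah(t)-\vh$ and the velocity bound of Lemma~\ref{Lem:vel-L^2}. Your write-up simply supplies the bookkeeping the paper leaves implicit, including the correct observation that the restriction to small $h$ is inherited from Lemma~\ref{lem:ESP}.
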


\begin{proof}
The integrability of $\lambda^{(h)}$ follows from \eqref{eq:boia-deh} and from Lemma~\ref{lem:ESP};
the one of $H_{\Eh_t}$ follows taking into account the first variation \eqref{eq:EL-sing-h} and the integrability of the discrete velocity, Lemma \ref{Lem:vel-L^2}.
\end{proof}

For every $h>0$ we set
\begin{gather*}
\Sigma(h):=\{t:~ \vert \Eh_{t}\vert\neq 1\}.
\end{gather*}
\begin{corollary}\label{prop:soft2hard}
For every $h>0$ small enough, we have
$$
\vert\Sigma(h)\vert\;\leq\; C_{n,0,T}h.
$$
\end{corollary}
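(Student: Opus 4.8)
The plan is to exploit the fact that on the set $\Sigma(h)$ of time steps where the volume constraint is violated, the Lagrange multiplier is forced to take the extremal value $|\lambdah(t)| = h^{-1/2}$ by the last assertion of Lemma~\ref{lem:EL}. Concretely, if $t \in \Sigma(h)$ then $\lambdah(t) = h^{-1/2}\,\mathrm{sgn}(1-|\Eh_t|)$, so in particular $|\lambdah(t)|^2 = 1/h$ on all of $\Sigma(h)$. Integrating this identity over $\Sigma(h)\cap[0,T]$ gives
\[
\frac{|\Sigma(h)\cap[0,T]|}{h} = \int_{\Sigma(h)\cap[0,T]} |\lambdah(t)|^2\, dt \le \int_0^T |\lambdah(t)|^2\, dt,
\]
and the right-hand side is bounded by $C_{n,0,T}$ thanks to Corollary~\ref{lem:bebysitter}. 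Multiplying through by $h$ yields exactly $|\Sigma(h)\cap[0,T]| \le C_{n,0,T}\, h$, which (since $\Sigma(h)$ is understood relative to the interval $[0,T]$, consistently with the notation in Corollary~\ref{lem:bebysitter}) is the claimed estimate.

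The only mild subtlety is a measurability/bookkeeping point: $\Sigma(h)$ is a union of half-open intervals $[kh,(k+1)h)$ over those indices $k$ with $|\Eh_{kh}| \ne 1$, so it is trivially measurable, and on each such interval $\vh(t,\cdot) = \tfrac1h \sd_{\Eh_{(k-1)h}}$ and $\Eh_t = \Eh_{kh}$ are constant in $t$; hence $\lambdah(t)$ is constant on each such interval and the pointwise identity $|\lambdah(t)|^2 = 1/h$ holds for a.e.\ $t \in \Sigma(h)$ (indeed for all such $t$). One should also note that $h$ is taken small enough that the conclusion of Lemma~\ref{lem:ESP} and Corollary~\ref{lem:bebysitter} apply, which is already part of the hypothesis ``for every $h>0$ small enough''.

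I do not expect any serious obstacle here: the entire content of the statement is the combination of the rigidity ``$|\lambdah| = h^{-1/2}$ when the volume constraint fails'' from Lemma~\ref{lem:EL} with the a~priori bound $\int_0^T |\lambdah|^2 \le C_{n,0,T}$ from Corollary~\ref{lem:bebysitter}. If anything the only thing to be careful about is to state clearly in the proof that $\Sigma(h)$ in the corollary is implicitly truncated to $[0,T]$, consistent with the dependence of the constant on $T$; otherwise the left-hand side could be infinite (e.g.\ if the volume never exactly equals $1$ for large times, which the soft penalization does not preclude). Given that caveat the argument is three lines.
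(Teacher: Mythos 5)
Your proof is correct and is essentially the paper's own argument: both rest on the observation from Lemma~\ref{lem:EL} that $|\lambdah(t)|=h^{-1/2}$ whenever $|\Eh_t|\neq 1$, combined with the bound $\int_0^T|\lambdah|^2\,dt\le C_{n,0,T}$ from Corollary~\ref{lem:bebysitter}. Your remark that $\Sigma(h)$ must implicitly be truncated to $[0,T)$ is a fair observation (the paper does this silently by writing $\Sigma(h)\subset\{t\in[0,T):|\lambdah(t)|\ge h^{-1/2}\}$), but it does not change the substance.
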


\begin{proof}
In view of Lemma \ref{lem:EL}, it is
$$
\Sigma(h)\subset\left\{ t\in[0,T):\vert\lambda^{(h)}(t)\vert\geq 1/\sqrt h\right\},
$$
and thus we have by Corollary~\ref{lem:bebysitter} 
\[
\vert\Sigma(h)\vert \;\leq \;h\int_0^T\vert\lambda^{(h)}(t)\vert^2\,dt\;\le\;C_{n,0,T}h. \qedhere
\]
\end{proof}

\subsection{Proof of Theorem~\ref{theo:conv-L1}}
The proof of the existence of a flat flow is now a simple
consequence of the results above.
Indeed, by \eqref{e:per decreasing}, \eqref{e:volume converging} and
Lemma~\ref{lem:ESP}, one can find sets $\{E_t\}_{t\in \Q^+}$, where $\Q^+$
denotes the set of positive rational numbers, and
a subsequence $h_k \downarrow 0$ such that
\[
\lim_{k\to +\infty}|E^{(h_k)}_t \Delta E_t| = 0 \quad  \forall\; t\in \Q^+.
\] 
Using the triangular inequality and Proposition~\ref{p:hoelder}, we deduce that
\begin{align}\label{e:hoelder cont}
|E_t\Delta E_s| &\leq \lim_{k\to+\infty}\Big( |E_t\Delta E^{(h_k)}_t| + |E^{(h_k)}_t\Delta E^{(h_k)}_s| + |E^{(h_k)}_s\Delta E_s|\Big)\notag\\
&\leq C_{n,0}|s-t|^{\sfrac12} \quad \forall\;0\leq s\leq t \in \Q^+.
\end{align}
Now a simple continuity argument implies that the sequence $E^{(h_k)}_t$ converges
to sets $E_t$ for all times $t\geq 0$ and satisfies \eqref{e:hoelder cont} for all $s,\, t\in[0,+\infty)$.
Finally, note that passing to the limit in \eqref{10} yields that $|E_t|=1$ and $\Per(E_t)\le\Per(E_0)$ (cf.\ 
\cite[Sec.\ 5.2.1]{EvansGariepy}) for a.e.\ $t\in(0,T)$.

\begin{remark}
It is also possible to show that the sequence of characteristic functions
\[
\chih(t,x):=\chi_{\Eh_t}(x)
\]
are precompact in $L^1((0,T)\times\R^n)$ for every $T>0$, thus giving an alternative proof of the theorem.
\end{remark}

%
%

\section{Distributional solutions}\label{s:4}
In this section we prove Theorem~\ref{theo:frenata} on the
existence of distributional solutions.
The two main ingredients of the proof besides the estimates of the previous
section are the hypothesis \eqref{eq:ass-LS} on the continuity
of the perimeters of the approximate solutions and the following proposition
which links the discrete velocity to the distributional time derivative
of the flat flow.

\begin{proposition}\label{p:time derivative}
Let $n\leq 7$ and $\{E^{(h)}_t\}_{t \geq 0}$ be 
an approximate solution to the volume-preserving mean-curvature flow.
Then, for every
$\phi \in C_c^\infty([0,+\infty) \times \R^n)$ it holds
\begin{equation}\label{e:time derivative}
\lim_{h \to 0}\Big\vert\int_h^{+\infty} \frac{1}{2h}
\Big[\int_{\Eh_t}\phi\,dx-\int_{\Eh_{t-h}}\phi\,dx\Big]dt
-\int_h^{+\infty}\int_{\partial\Eh_t}\phi \,v_{h}\,d\Ha^{n-1}dt\Big\vert
=0.
\end{equation}
\end{proposition}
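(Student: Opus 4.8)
\emph{Strategy and reduction.} Equation~\eqref{e:time derivative} expresses the consistency, in a distributional sense, between the discrete difference quotient of $t\mapsto\int_{\Eh_t}\phi$ and the boundary integral of the discrete velocity $\vh$. I would prove it through a quantitative one--time--step version of the transport identity $\tfrac{d}{dt}\int_{E_t}\phi=\int_{\partial E_t}v\,\phi\,d\Ha^{n-1}$, followed by a summation over the steps. Fix $T>0$ with $\phi\equiv0$ on $[T,\infty)\times\R^n$; by Lemma~\ref{lem:ESP} all the sets $\Eh_t$ with $t\le T$ lie in a fixed ball for $h$ small. Since $\Eh_t=\Eh_{kh}$ and $\vh(t,\cdot)=\tfrac1h\,\sd_{\Eh_{(k-1)h}}$ on $[kh,(k+1)h)$, and since the error estimates below are uniform in $t$ over each such interval, it suffices to establish the one--step comparison
\[
\int_{\Eh_{kh}}\phi\,dx-\int_{\Eh_{(k-1)h}}\phi\,dx=\int_{\partial^*\Eh_{kh}}\phi\,\sd_{\Eh_{(k-1)h}}\,d\Ha^{n-1}+\mathrm{Err}_k
\]
together with $\sum_{k:\,kh\le T}|\mathrm{Err}_k|\to0$ as $h\downarrow0$; summing then yields \eqref{e:time derivative}.

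\emph{One--step identity.} Put $G:=\Eh_{(k-1)h}$ and $E:=\Eh_{kh}$. Since $|\nabla\sd_G|=1$ a.e., applying the divergence theorem on $E$ and on $G$ to the vector field $X:=\phi\,\sd_G\,\nabla\sd_G$ (which vanishes on $\partial^*G$, where $\sd_G=0$) and subtracting the two identities, one obtains $\mathrm{Err}_k$ as a sum of three contributions:
\begin{multline*}
\mathrm{Err}_k=\int_{\partial^*E}\phi\,\sd_G\,(\nu_E\!\cdot\!\nabla\sd_G-1)\,d\Ha^{n-1}\\
-\int_{E\Delta G}\d_G\,(\nabla\phi\!\cdot\!\nabla\sd_G)\,dx-\int_{E\Delta G}\phi\,\d_G\,\Delta\sd_G\,dx,
\end{multline*}
namely a \emph{normal--deviation} term, an \emph{oscillation} term, and a \emph{curvature} term ($\Delta\sd_G$ being a classical function on the collar $E\Delta G$, by the regularity used below).

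\emph{Summing the errors.} Summing the discrete dissipation~\eqref{10} over $k$ and telescoping gives the key a priori bound $\sum_{k:\,kh\le T}\tfrac1h\int_{\Eh_{kh}\Delta\Eh_{(k-1)h}}\d_{\Eh_{(k-1)h}}\,dx\le\Per(E_0)$, i.e.\ $\sum_k\int_{\Eh_{kh}\Delta\Eh_{(k-1)h}}\d_{\Eh_{(k-1)h}}\,dx\le h\,\Per(E_0)$ (compare~\eqref{e:extra1}). Since $|\nabla\sd_G|=1$, this at once bounds the oscillation term by $\|\nabla\phi\|_\infty\,h\,\Per(E_0)\to0$. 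For the remaining two terms I would combine: the $L^\infty$--estimate of Proposition~\ref{p:Linfty} (the collar $E\Delta G$ has width $\le\gamma_n\sqrt h$ and lies in a one--sided tubular neighbourhood of $\partial G$ on which $\sd_G$ is smooth, so $\d_G|\Delta\sd_G|\lesssim\|A_{\partial G}\|_\infty\,\d_G$); the uniform density estimates of Corollary~\ref{c:density} together with $\varepsilon$--regularity of the (almost--)minimal boundaries---which for $n\le7$ are smooth---yielding a curvature bound $\|A_{\partial\Eh_{kh}}\|_\infty\lesssim h^{-1/2}$ on the bulk of the time steps; and the $L^2$--bounds $\int_0^T\!\int_{\partial\Eh_t}\vh^2\le C_{n,0}$ (Lemma~\ref{Lem:vel-L^2}) and $\int_0^T\!\int_{\partial\Eh_t}H^2_{\Eh_t}+\int_0^T|\lambdah|^2\le C_{n,0,T}$ (Corollary~\ref{lem:bebysitter}), invoked through Cauchy--Schwarz and the Euler--Lagrange relation $\vh=\lambdah-H_{\Eh_{kh}}$ to absorb the few steps where the pointwise curvature bound degenerates. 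Schematically, the curvature term is then $\lesssim\|\phi\|_\infty\,h^{-1/2}\sum_k\int_{\Eh_{kh}\Delta\Eh_{(k-1)h}}\d_{\Eh_{(k-1)h}}\lesssim\|\phi\|_\infty\,h^{1/2}\,\Per(E_0)\to0$, and the normal--deviation term---where $\nu_E\!\cdot\!\nabla\sd_G-1=-\tfrac12|\nu_E-\nabla\sd_G|^2$ is controlled by the $C^1$--closeness, at scale $\sqrt h$, of $\partial E$ to the level sets of $\sd_G$---is handled in the same spirit.

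\emph{Main obstacle.} The substance of the proof is the curvature term (and the closely related normal--deviation term): one has to play the only available control on the curvature of $\partial\Eh_t$---an $L^2$--bound in space--time and an $L^\infty$--bound at scale $\sqrt h$---against the $O(h)$ smallness of the total distance--weighted swept volume. It is precisely here, in the interior regularity of the minimizers and the absence of a singular set in the boundary, that the hypothesis $n\le7$ enters.
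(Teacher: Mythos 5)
Your global integration-by-parts decomposition (apply the divergence theorem to $X=\phi\,\sd_G\nabla\sd_G$ on $E$ and $G$ and read off a normal-deviation, an oscillation, and a curvature term) is a genuinely different starting point from the paper, and your treatment of the oscillation term via the telescoped dissipation bound $\sum_k\int_{E\Delta G}\d_G\le h\Per(E_0)$ is correct. But the two remaining terms — which you yourself identify as "the substance of the proof" — are exactly where your argument has gaps. First, $\Delta\sd_G$ is \emph{not} a classical (or even $L^1_{\loc}$) function on the collar $E\Delta G$: it is a measure whose singular part lives on the cut locus (ridge set) of $G$, and the bound $\d_G|\Delta\sd_G|\lesssim\|A_{\partial G}\|_\infty\,\d_G$ only holds within the reach of $\partial G$. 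Nothing in the scheme controls the reach of $\Eh_{(k-1)h}$ from below; near thin necks or incipient pinch-offs the ridge set passes through $E\Delta G$, the divergence theorem for $X$ must be reinterpreted distributionally, and the singular ridge contribution is not estimated by anything you have (in particular not by $\Ha^{n-1}(\partial G)$ without further work). Second, the pointwise bound $\|A_{\partial \Eh_{kh}}\|_\infty\lesssim h^{-1/2}$ "on the bulk of the time steps" is asserted but not available: the only curvature information in the paper is the space–time $L^2$ bound of Corollary~\ref{lem:bebysitter}, and $\varepsilon$-regularity for $\Lambda$-minimizers yields $C^{1,\kappa}$ control of the boundary, not a quantitative second-fundamental-form bound at scale $\sqrt h$. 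The same issue affects the normal-deviation term: the $C^1$-closeness of $\partial\Eh_{kh}$ to the level sets of $\sd_G$ at the relevant scale is precisely the hard content of the paper's Lemma~\ref{lem:rosticini}, whose proof needs the dichotomy on the size of $v^{(h)}$ (condition~\eqref{e:small curvature}), a rescaling to $(\Lambda,r)$-minimizers at the sub-$\sqrt h$ scale $h^\beta$, compactness, and the Bernstein theorem (this is where $n\le7$ actually enters); "handled in the same spirit" does not supply it.

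For contrast: the paper never touches second derivatives of $\sd_G$. It covers $\Eh_t\Delta\Eh_{t-h}$ by cylinders of radius $h^\beta$ around points where the velocity is $\le h^{\alpha-1}$ — in which \emph{both} $\partial\Eh_t$ and $\partial\Eh_{t-h}$ are almost-flat $C^{1,\kappa}$ graphs over a common hyperplane (Lemma~\ref{lem:rosticini}), so that the volume/boundary-integral comparison reduces to Fubini for graphs (Corollary~\ref{c:volume}) — plus balls of radius $\gamma_n\sqrt h$ around the remaining points, whose total contribution is small because a large velocity forces, via the density estimates of Corollary~\ref{c:density}, a large local dissipation $\int|v^{(h)}|$. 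To repair your route you would either need to restrict all estimates to within the reach of $\partial G$ and separately control the cut-locus contribution, or localize to regions of established flatness — at which point you are reconstructing the paper's covering argument.
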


Assuming the proposition we give a proof of the theorem.

\subsection{Proof of Theorem~\ref{theo:frenata}}
It follows straightforwardly from \eqref{eq:ass-LS} (cp., for instance,
\cite[Proposition~1.80]{AmFuPa}) that $\Ha^{n-1} \res \de^* E_t^{(h_k)}$ weakly
converges to $\Ha^{n-1} \res \de^* E_t^{}$ 
for almost every $t \in [0, +\infty)$.
In particular this implies that, for almost every $t \in [0, +\infty)$,
the boundaries $\de^* E_t^{(h_k)}$ converge to
$\de^* E_t$ in the sense of varifolds: namely, for a.e.~$t\in [0, +\infty)$
it holds
\begin{equation}\label{e:varifold conv}
\lim_{k\to \infty} \int_{\de^*\Ehk_t} F\big(x, 
\nu_{\Ehk_t}(x)\big)\,d\Ha^{n-1}(x)= 
\int_{\de^* E_t} F\big(x, \nu_{E_t}(x)\big)\,d\Ha^{n-1}(x),
\end{equation}
for every $F\in C_c(\R^n \times \R^n)$.
Indeed, by a simple approximation argument it is easy to verify
that it is enough to consider $F\in C_c^1(\R^n \times \R^n)$.
Then, for every $\eps>0$ we pick a continuous function $\nu_\eps :\R^n \to \R^n$
such that
\[
\int_{\de^* E_t} |\nu_{E_t} - \nu_\eps|^2 \,d\Ha^{n-1} \leq \eps^2,
\]
and estimate as follows
\begin{align*}
&\lim_{k\to+\infty}\left\vert\int_{\de^*\Ehk_t} \left(F\big(x,\nu_{\Ehk_t}(x)\big) - 
F\big(x,\nu_{\eps}(x)\big)\right)\,d\Ha^{n-1}(x)\right\vert\\
&\quad\leq \lim_{k\to+\infty} \|DF\|_{L^\infty}\,\int_{\de^* \Ehk_t} |\nu_{\Ehk_t} - \nu_\eps| \,d\Ha^{n-1}\\
&\quad\leq \lim_{k\to+\infty} \|DF\|_{L^\infty}\,\Per(\Ehk_t)^{\sfrac12}\left(\int_{\de^* \Ehk_t} |\nu_{\Ehk_t} - 
\nu_\eps|^2 \,d\Ha^{n-1}\right)^{\sfrac12}\\
& \quad= \|DF\|_{L^\infty}\,\Per(E_t)^{\sfrac12}\left(\int_{\de^* E_t} |\nu_{E_t} - \nu_\eps|^2 
\,d\Ha^{n-1}\right)^{\sfrac12}\\
&\quad \leq \|DF\|_{L^\infty}\,\,\Per(E_t)^{\sfrac12}\eps,
\end{align*}
where we used \eqref{eq:ass-LS} and the weak convergence of the the vector valued measures
$\nu_{\Ehk_t}\Ha^{n-1}\res \de^*\Ehk_t \weaks \nu_{E_t}\Ha^{n-1}\res \de^*E_t$ in the following way:
\begin{align*}
\lim_{k\to+\infty}\int_{\de^* \Ehk_t} &|\nu_{\Ehk_t} -\nu_\eps|^2 \,d\Ha^{n-1} \\
& = \lim_{k\to+\infty}
\int_{\de^* \Ehk_t} \big(1+ | \nu_\eps|^2 - 2 \nu_{\Ehk_t} \cdot \nu_{\eps}\big)\,d\Ha^{n-1}\\
& = \int_{\de^* E_t} \big(1+ | \nu_\eps|^2 - 2 \nu_{E_t} \cdot \nu_{\eps}\big)\,d\Ha^{n-1}\\
&= \int_{\de^* E_t} |\nu_{E_t} -\nu_\eps|^2 \,d\Ha^{n-1}.
\end{align*}

Next we use Lemma~\ref{Lem:vel-L^2} and Corollary~\ref{lem:bebysitter}
in conjunction with the results in Hutchinson \cite[Theorem~4.4.2]{Hutch}
to deduce the existence of 
functions $v:[0,+\infty) \times \R^n \to \R$, $\widehat\lambda:[0,+\infty) \to 
\R$ and $H:[0,+\infty) \times \R^n \to \R$ such that
\begin{gather*}
\int_0^T\vert\widehat\lambda\vert^2\,dt+\int_0^T\int_{\partial^*E_t}\left(\vert 
v\vert^2+|H|^2\right)\,\Ha^{n-1}dt<C_{n, 0,T},
\end{gather*}
and
\begin{gather}
\lim_{k\to\infty}\int_0^T\int_{\partial \Ehk_t}v_{h_k}\phi\,d\Ha^{n-1}dt=
\int_0^T\int_{\partial^*E_t}v\,\phi\,d\Ha^{n-1}dt,\label{e:conv1}\\
\lim_{k\to\infty}\int_0^T\int_{\partial \Ehk_t}\lambda^{(h_k)}\phi\,d\Ha^{n-1}dt=
\int_0^T\int_{\partial^*E_t}\widehat\lambda\,\phi\,d\Ha^{n-1}dt,\label{e:conv2}\\
\lim_{k\to\infty}\int_0^T\int_{\partial \Ehk}H_{E^{(h_k)}_t} 
\nu_{\Ehk}\cdot\Phi\,d\Ha^{n-1}dt=
\int_0^T\int_{\partial^*E_t}H\cdot\Phi\,d\Ha^{n-1}dt,\label{e:conv3}
\end{gather}
for every $\phi\in C^0_c([0,T)\times\R^n)$ and every 
$\Phi \in C^0_c([0,T)\times\R^n;\R^n)$.

\medskip

In particular, testing \eqref{e:varifold conv} with
$F(x, \nu) := \div \Psi - \nu \cdot \nabla \Psi \nu$ for some $\Psi \in C_c^1(\R^n;\R^n)$
and using \eqref{e:conv3},
by a simple approximation argument we conclude that, for a.e.~$t\in [0, +\infty)$,  
\begin{align}
\int_{\de^* E_t} \div_{\de E^t} \Psi \, d\Ha^{n-1} &=
\lim_{k \to +\infty} \int_{\de^* \Ehk_t} \div_{\de \Ehk_t} \Psi \, d\Ha^{n-1}\notag\\
& = \lim_{k\to +\infty} \int_{\de^* \Ehk_t} \nu_{\Ehk_t} \cdot \Psi \, H_{\Ehk_t}\,d\Ha^{n-1}\notag\\
& = \int_{\de^* E_t} \nu_{E_t} \cdot \Psi \, \notag H\,d\Ha^{n-1},
\end{align}
thus showing that $H(t, \cdot)$ is the generalized mean-curvature of $E_t$ for a.e.~$t \in [0,+\infty)$
(cp.~\eqref{e:generalized mean-curv}) and 
proving \eqref{e:gen mean-curv} of Theorem~\ref{theo:frenata}.

Similarly, \eqref{e:volMCF weak1} and \eqref{e:lagrange multip}
follows from \eqref{eq:EL-sing-h} and \eqref{eq1} by using 
\eqref{e:conv1} and \eqref{e:conv2}.

We need only to show \eqref{e:volMCF weak2}. To this aim we use Proposition~\ref{p:time derivative}.
For every $\phi \in C_c^1([0,+\infty) \times \R^n)$, by a change of variables we have that
\begin{multline*}
\int_h^\infty \Big[\int_{\Eh_t}\phi\,dx-\int_{\Eh_{t-h}}\phi\,dx\Big]dt  \\
= \int_h^\infty \int_{\Eh_t}\big(\phi(t, x) - \phi(t+h, x)\big)\,dx\,dt - h \int_{E_{0}}\phi\,dx,
\end{multline*}
where we used that $\Eh_t = E_0$ for $t\in [0, h)$.
Therefore it follows by a simple convergence argument that
\begin{align}
\lim_{h \to 0}\int_h^\infty \frac{1}{h} \Big[\int_{\Eh_t}\phi\,dx-\int_{\Eh_{t-h}}\phi\,dx\Big]dt  
= - \int_0^\infty \int_{E_t}\frac{\de \phi}{\de t}(t, x)\,dx\,dt - \int_{E_{0}}\phi\,dx.\notag
\end{align}
In view of \eqref{e:conv1} and \eqref{e:time derivative}, we conclude \eqref{e:volMCF weak2}
straightforwardly.

\subsection{Tilting of the tangent planes}
In this subsection and in the next one we give the proof of Proposition~\ref{p:time derivative}.
We follow closely the arguments in \cite{LuckhausSturzenhecker95} and for the sake
of completeness we provide a detailed proof in different steps.

This subsection is devoted to the estimate of the tilting of the normals
around points of small curvature. We recall that we assume
in this section $n\leq 7$ (in particular, the approximate solutions of 
the volume-preserving mean-curvature flow are everywhere of class $C^{2,\kappa}$).

\begin{lemma}\label{lem:rosticini}
For given constants $\frac12<\beta<\alpha<1$, there exists
a continuous increasing function $\omega:[0,1] \to \R$ with $\omega(0) =0$
with the following property.
Let $t\in[2h,+\infty)$, $\{\Eh_t\}_{t \geq 0}$ be an approximate
solution to the volume-preserving mean-curvature flow,
and let $x_0\in\partial\Eh_t$  be such that
\begin{equation}\label{e:small curvature}
\vert \vh (t,y)\vert\leq h^{\alpha-1}\quad \forall\; y\in B_{\gamma_n\sqrt{h}}(x_0)\cap (\Eh_t\Delta\Eh_{t-h}).
\end{equation}
Then there exists $\nu \in\R^n$ such that $\vert\nu\vert=1$ and
\begin{gather}
|\nu_{\partial\Eh_t}(y)-\nu| \leq \omega(h) \quad \forall\; y \in B_{h^\beta}(x_0) \cap \de \Eh_t,\label{e:constant1}\\
|\nu_{\partial\Eh_{t-h}}(y)-\nu| \leq \omega(h) \quad \forall\; y \in B_{h^\beta}(x_0) \cap \de 
\Eh_{t-h}.\label{e:constant2}
\end{gather}
\end{lemma}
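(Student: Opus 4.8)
The plan is to exploit the $C^{2,\kappa}$-regularity of the approximate solutions together with the curvature bound hidden in hypothesis \eqref{e:small curvature}, and to derive a uniform modulus of continuity for the normal from an iteration (or ``tilt-excess decay'') argument for the almost-minimal surfaces $\partial\Eh_t$ and $\partial\Eh_{t-h}$. Recall that by Lemma~\ref{lem:EL} the boundary $\partial\Eh_t$ solves, in a suitable rotated graph coordinate system near any point, the prescribed mean-curvature equation $\mathrm{div}\big(\nabla u/\sqrt{1+|\nabla u|^2}\big) = \vh - \lambdah(t)$, and the right-hand side is controlled: on the relevant ball $B_{\gamma_n\sqrt h}(x_0)$ the hypothesis gives $|\vh|\le h^{\alpha-1}$, while $|\lambdah(t)|\le h^{-1/2}$ always. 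The key scaling observation is that on the ball $B_{h^\beta}(x_0)$ with $\beta<\alpha<1$, after rescaling lengths by $h^{-\beta}$, the inhomogeneous term scales like $h^\beta(\vh-\lambdah)$, which is at most $h^{\beta}h^{\alpha-1}=h^{\alpha+\beta-1}$; since $\beta>\tfrac12$ and $\alpha>\beta$ we have $\alpha+\beta-1>2\beta-1>0$, so the rescaled curvature tends to $0$ as $h\to0$. Thus on the rescaled picture $\partial\Eh_t$ is, quantitatively, closer and closer to a minimal surface.

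First I would set up, at the point $x_0\in\partial\Eh_t$, the tilt-excess (or spherical excess) $E(x_0,r):=r^{1-n}\int_{\partial\Eh_t\cap B_r(x_0)}|\nu_{\partial\Eh_t}(y)-\nu_{x_0,r}|^2\,d\Ha^{n-1}(y)$, where $\nu_{x_0,r}$ is the average normal. Using the density estimates of Corollary~\ref{c:density} (valid at all scales $r\in(0,\gamma_n\sqrt h)$), the monotonicity formula with the error term controlled by $\|\vh-\lambdah\|_{L^\infty(B_{\gamma_n\sqrt h}(x_0))}\le C h^{-1/2}$, and Allard-type/De Giorgi-type tilt-excess decay for $(\Lambda,r_0)$-almost minimizers, one obtains a power decay $E(x_0,\rho)\le C\big[(\rho/r)^{2\sigma}E(x_0,r)+ (\Lambda r)^{?}\big]$ with $\Lambda\sim h^{-1/2}$. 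Starting from the scale $r=\gamma_n\sqrt h$ (where $E\le C_n$ by the area bound \eqref{e:density area} and a trivial $L^\infty$ bound on normals) and iterating down to scale $h^\beta$, the accumulated error term is a positive power of $h$ (here $n\le7$ and Bernstein — invoked via Corollary~\ref{lem:rosticini} — guarantees that the only relevant blow-up limits are flat, so the decay exponent $\sigma$ is genuinely positive without dimensional obstruction). Combined with $C^{1,\kappa}$-regularity (which upgrades the $L^2$ excess on $B_{h^\beta}(x_0)$ to a pointwise oscillation bound for $\nu_{\partial\Eh_t}$ on the smaller ball, with a loss of a fixed fraction of the radius — this forces the final balls in \eqref{e:constant1}–\eqref{e:constant2} to be taken of radius comparable to, say, $h^\beta/2$, or one simply redefines $\beta$), this yields $|\nu_{\partial\Eh_t}(y)-\nu|\le\omega(h)$ on $B_{h^\beta}(x_0)\cap\partial\Eh_t$ with $\omega(h)\to0$, proving \eqref{e:constant1}.

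For \eqref{e:constant2} the point is that hypothesis \eqref{e:small curvature} is an assumption about the symmetric difference $\Eh_t\Delta\Eh_{t-h}$, not directly about $\partial\Eh_{t-h}$. I would argue as follows: $\sd_{\Eh_{t-h}}=h\,\vh(t,\cdot)$, so on $B_{\gamma_n\sqrt h}(x_0)$ the hypothesis says $|\sd_{\Eh_{t-h}}|\le h^\alpha$ on $\Eh_t\Delta\Eh_{t-h}$; together with the $L^\infty$-estimate of Proposition~\ref{p:Linfty}, $\sup_{\Eh_t\Delta\Eh_{t-h}}\d_{\Eh_{t-h}}\le\gamma_n\sqrt h$, one sees that near $x_0$ the two boundaries $\partial\Eh_t$ and $\partial\Eh_{t-h}$ are at mutual distance $\le\gamma_n\sqrt h$ and moreover $\partial\Eh_{t-h}$ also satisfies the same prescribed-curvature PDE (with $\vh(t-h,\cdot)$ and $\lambdah(t-h)$) — but we do \emph{not} have the curvature smallness for $\partial\Eh_{t-h}$ for free. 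The resolution: on the part of $\partial\Eh_{t-h}$ lying in $B_{h^\beta}(x_0)$ that is \emph{not} in $\Eh_t\Delta\Eh_{t-h}$, the two boundaries coincide, so the normal bound transfers; on the part in $\Eh_t\Delta\Eh_{t-h}$, the curvature of $\partial\Eh_{t-h}$ is controlled precisely by $|\vh(t-h,\cdot)|+|\lambdah(t-h)|$, and one uses the density estimates for $\Eh_{t-h}$ plus the $L^\infty$-closeness to $\partial\Eh_t$ to run the same excess-decay argument, now seeded by the excess of $\partial\Eh_t$ plus an $O(h^{1/2}\cdot h^{-\beta})=O(h^{1/2-\beta})$ correction coming from the mismatch — again a positive power of $h$ since $\beta<\tfrac12$ is false, so here one must instead feed in the \emph{graph-distance} bound: since $\d_{\Eh_{t-h}}\le h^\alpha$ on $\Eh_t\Delta\Eh_{t-h}$ and the boundaries are $C^{1,\kappa}$, elliptic interior estimates give a $C^1$-bound on the difference of the two graph functions of order $h^{\alpha-\beta}\to0$, which is exactly what is needed.

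\textbf{Main obstacle.} The hard part will be making the excess-decay/iteration quantitatively honest in the presence of the large curvature bound $\Lambda\sim h^{-1/2}$: one is decaying excess from scale $\sqrt h$ down to scale $h^\beta$ with $\beta$ only slightly larger than $\tfrac12$, so there is only a \emph{short} range of scales and the error terms $\Lambda r$ are of size $h^{-1/2}\cdot h^\beta=h^{\beta-1/2}$, a small but not dramatically small power of $h$ — one must check that the geometric decay rate beats this. This is exactly where $n\le7$ enters: Corollary~\ref{lem:rosticini} (Bernstein) guarantees that tangent cones of blow-up limits are hyperplanes, hence the De Giorgi tilt-excess decay holds with a uniform exponent and no spurious loss, so the iteration closes and produces a single modulus $\omega$ depending only on $n,\alpha,\beta$. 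The secondary technical annoyance — that $C^1$-regularity forces a loss of a definite fraction of the radius when converting $L^2$-excess to pointwise normal oscillation, so the balls in \eqref{e:constant1}–\eqref{e:constant2} should strictly be $B_{h^\beta/2}(x_0)$ — is harmless and can be absorbed by relabeling $\beta$, since any $\beta'\in(\tfrac12,\alpha)$ works.
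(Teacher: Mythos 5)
Your proposal contains the right raw ingredients ($\Lambda$-almost-minimality with $\Lambda\sim h^{-1/2}$ at scale $h^\beta$, Bernstein for $n\le 7$, the bound $\d_{\Eh_{t-h}}\le h^\alpha$ forcing the two boundaries to share a limiting plane, and $C^{1,\kappa}$ regularity to pass to pointwise normal bounds), but the architecture you build from them has three genuine gaps. First and most importantly, your excess-decay iteration cannot be started: at the initial scale $r=\gamma_n\sqrt h$ you only have $E(x_0,r)\le C_n$, a (large) dimensional constant, whereas De Giorgi/Tamanini tilt-excess decay for almost-minimizers is \emph{conditional} on the excess being below a small dimensional threshold $\eps_n$. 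Getting below that threshold is precisely the hard step, and it is exactly what the paper's proof supplies: one rescales by $h^\beta$, observes that $\Ebetah_t$ and $\Ebetah_{t-h}$ are $(\Lambda_h,r_h)$-minimizers with $\Lambda_h=(\gamma_n+2)h^{\beta-1/2}\to0$ and $r_h=h^{1/2-\beta}\to\infty$, extracts $L^1_\loc$-limits which are local perimeter minimizers, and invokes Bernstein ($n\le7$) to conclude they are half-spaces; only then does $\eps$-regularity (uniform $C^{1,\kappa}$ estimates for $\Lambda$-minimizers) apply and yield $\omega$. Your remark that Bernstein ``guarantees the decay exponent $\sigma$ is genuinely positive'' misplaces its role: Bernstein is needed to establish initial flatness, not to fix an exponent. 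Second, your PDE-based curvature control uses ``$|\lambdah(t)|\le h^{-1/2}$ always,'' which is only true when $|\Eh_t|\ne1$; when the volume constraint is active, $\lambdah(t)$ is a genuine Lagrange multiplier with no pointwise bound (only $\int_0^T|\lambdah|^2\,dt\le C_{n,0,T}$). The paper sidesteps this entirely by never using the Euler--Lagrange equation here: the almost-minimality inequality \eqref{e:almost minimizing} is derived directly from the functional, since the penalization term contributes at most $h^{-1/2}|F\Delta\Eh_t|$ regardless of whether the constraint is active.

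Third, your treatment of \eqref{e:constant2} does not close. The hypothesis \eqref{e:small curvature} bounds $\vh(t,\cdot)=h^{-1}\sd_{\Eh_{t-h}}$, which is the curvature datum for $\partial\Eh_t$; it gives \emph{no} control on $\vh(t-h,\cdot)$ or $\lambdah(t-h)$, i.e.\ on the curvature of $\partial\Eh_{t-h}$, so the ``same prescribed-curvature PDE'' route is unavailable. Your fallback — interior elliptic estimates converting the $L^\infty$ graph distance $h^{\alpha-\beta}$ (after rescaling) into $C^1$ closeness — presupposes that $\partial\Eh_{t-h}$ is already a graph with uniformly small gradient over the plane adapted to $\partial\Eh_t$, which is the statement to be proved; as written it is circular (and the intermediate claim that the two boundaries ``coincide'' outside $\Eh_t\Delta\Eh_{t-h}$ is false). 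The paper's route is cleaner: $\Eh_{t-h}$ is itself a minimizer of $\Fh(\cdot,\Eh_{t-2h})$ (this is where $t\ge 2h$ is used), hence the rescaled set $\Ebetah_{t-h}$ is a $(\Lambda_h,r_h)$-minimizer by the same variational argument, its blow-up limit is a half-space by compactness plus Bernstein, and the rescaled distance bound $\d_{\Ebetah_{t-h}}\le h^{\alpha-\beta}\to0$ forces this half-space to equal the one obtained from $\Ebetah_t$; the common normal $\nu$ and both estimates \eqref{e:constant1}--\eqref{e:constant2} then follow from the uniform $C^{1,\kappa}$ regularity of $\Lambda$-minimizers.
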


\begin{proof}
Let $0<R \leq h^{\frac12-\beta}$ and
let $F\subset \R^n$ be any set such that $\Eh_t\Delta F\subset\subset \BRhbeta$.
By the minimizing property of $\Eh_t$ we have that
\begin{align}\label{e:minimizing}
\Per(\Eh_t,\BRhbeta) & \leq\Per(F,\BRhbeta) +\frac{1}{h}\int_{F\Delta \Eh_t}\d_{\Eh_{t-h}}(y)\,dy
\notag\\
&\quad+\frac{1}{\sqrt{h}} \big(\big\vert\vert F\vert-1\big\vert-\big\vert\vert \Eh_t\vert-1\big\vert\big).
\end{align}
A straightforward computation yields
\begin{gather*}
\big\vert\vert F\vert-1\big\vert-\big\vert \vert\Eh_t\vert-1\big\vert
\leq |F \Delta \Eh_t|,\\
\frac{1}{h}\int_{F\Delta \Eh_{t}}\d_{\Eh_{t-h}}(y)\,dy \leq  \frac{\gamma_n+1}{\sqrt{h}} \,|F \Delta \Eh_t|,
\end{gather*}
where we used that
$|\vh(t,y)|\le Rh^{\beta-1} + \gamma_n \, h^{-\sfrac12}\le (\gamma_n+1)\,h^{-\frac12}$ for all $y\in B_{Rh^\beta}(x_0) 
\cap(\Eh_t\Delta F)$ thanks to the fact that
$x_0 \in \de \Eh_t$, Proposition~\ref{p:Linfty} and the $1$-Lipschitz continuity
of the signed distance $\sd_{\Eh_{t-h}}$.
Combining the above estimates with \eqref{e:minimizing}, we obtain
\begin{equation}\label{e:almost minimizing}
\Per(E_t^{(h)},\BRhbeta)\;\leq\;\Per(F,\BRhbeta)+\frac{\gamma_n+2}{\sqrt{h}}\,|F \Delta \Eh_t|.
\end{equation}

Next we introduce the sets
\begin{align*}
\Ebetah_t&:=\Big\{z\in \R^n :~z=\frac{y-x_0}{h^\beta},\,y\in \Eh_t\Big\},\\
\Ebetah_{t-h}&:=\Big\{z\in \R^n :~z=\frac{y-x_0}{h^\beta},\,y\in \Eh_{t-h}\Big\}.
\end{align*}
By a simple rescaling argument, from \eqref{e:almost minimizing}
and from the analogous estimates at time $t-h$ (recall that $t\geq 2h$)
we deduce  that for $s=t, t-h$
\begin{gather}\label{e:lambda minimizing}
\Per(\Ebetah_s,B_R) \leq \Per(F, B_R) +(\gamma_n+2)\, h^{\beta-\frac{1}{2}}|F \Delta \Ebetah_s|\notag\\
\quad \forall\;R \leq h^{\sfrac12 - \beta},\quad \forall\; F \Delta \Ebetah_s \subset \subset B_R.\notag
\end{gather}
This implies that $\Ebetah_t$ and $\Ebetah_{t-h}$ are both $(\Lambda_h,r_h)$-minimizers of the perimeter on 
$\Lambda_h:=(\gamma_n+2) h^{\beta-\frac{1}{2}}$ and $r_h:=h^{\frac{1}{2}-\beta}$.
By the precompactness for sequences of 
$\Lambda_h$-minimizers (cf.\ \cite[Prop.\ 21.13]{Maggi}), 
we conclude that we can find a subsequence (not relabeled) verifying
\begin{gather*}
\lim_{h\to 0}\chi_{\Ebetah_{t}}=\chi_{E^{\beta}_1}\quad\text{and}\quad
\lim_{h\to0}\chi_{\Ebetah_{t-h}}=\chi_{E^{\beta}_{2}}\quad\text{in }L^1_{\loc}(\R^n).
\end{gather*}
Moreover, using the lower semicontinuity of the perimeter with respect to $L^1$ convergence and 
$\beta>\frac12$,
we deduce that $E^\beta_1$ and $E^\beta_2$ are locally minimizing the perimeter.
By the assumption $n\leq 7$ and a Bernstein theorem (see \cite[Theorem~17.3]{Giusti}),
$E^\beta_1,\, E^\beta_2$ are half-spaces. Moreover, 
by hypothesis it holds
\begin{gather*}
\d_{\Ebetah_{t-h}}(z) \leq
h^{\alpha-\beta}
\quad \forall\; z\in B_{h^{\frac{1}{2}-\beta}}(0)\cap \left(\Ebetah_t\Delta \Ebetah_{t-h}\right),
\end{gather*}
thus implying that $E^\beta_1= E^\beta_2$,
and by the fact that both are hyperplanes
there exists $\nu \in \R^n$ with $|\nu|=1$ such that
$$
E^\beta_1= E^\beta_2=\left\{z\in\R^n:~z\cdot \nu<0\right\}.
$$

To reach the conclusion of the lemma we need only to
invoke the regularity theory of $\Lambda$-minimizing set (cp.~\cite[Theorem~26.3]{Maggi})
and conclude that $\de E^{(h),\beta}_s$ is uniformly $C^{1,\kappa}$ in $B_{1}$ for $s= t, t-h$,
thus leading straightforwardly to \eqref{e:constant1} and \eqref{e:constant2}.
\end{proof}

\begin{corollary}\label{c:volume}
Under the hypotheses of Lemma~\ref{lem:rosticini}, let $\bC_{\sfrac{h^\beta}{2}}(x_0,\nu)\subset \R^n$
be the open cylinder defined as
\begin{multline}
\bC_{\sfrac{h^\beta}{2}}(x_0, \nu) \\
:= \left\{ x \in \R^n :~ |(x-x_0)\cdot \nu| <\sfrac{h^\beta}{2},\, 
\sqrt{|x-x_0|^2-|(x-x_0)\cdot \nu|^2} 
< \sfrac{h^\beta}{2} \right\}. \notag
\end{multline}
Then, there exists a dimensional constant $C>0$ such that
\begin{multline}\label{e:volume}
\left\vert \int_{\bC_{\sfrac{h^\beta}{2}}(x_0,\nu)} \big(\chi_{\Eh_t}-\chi_{\Eh_{t-h}}\big)\,dx 
- \int_{\de \Eh_t \cap \bC_{\sfrac{h^\beta}{2}}(x_0,\nu)}\sd_{\Eh_{t-h}} \, d\Ha^{n-1} \right\vert \\
\leq C\, \omega(h)
\int_{\bC_{\sfrac{h^\beta}{2}}(x_0,\nu)} \big\vert\chi_{\Eh_t}-\chi_{\Eh_{t-h}}\big\vert\,dx.
\end{multline}
\end{corollary}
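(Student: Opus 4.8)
The plan is to reduce the claimed estimate to a pointwise comparison inside the cylinder $\bC:=\bC_{\sfrac{h^\beta}{2}}(x_0,\nu)$, exploiting the fact that, by Lemma~\ref{lem:rosticini}, both $\de\Eh_t$ and $\de\Eh_{t-h}$ are, after rescaling by $h^\beta$, uniformly $C^{1,\kappa}$ graphs over the hyperplane $\nu^\perp$ with gradient bounded by $\omega(h)$. Fix coordinates so that $x_0=0$ and $\nu=e_n$, write $x=(x',x_n)$, and let $u_t,u_{t-h}:D\to\R$ (with $D$ the $(n-1)$-dimensional disc of radius $h^\beta/2$) be the graph functions whose graphs describe $\de\Eh_t\cap\bC$ and $\de\Eh_{t-h}\cap\bC$ respectively; by the choice of the cylinder height $h^\beta/2$ and the smallness of $\omega(h)$, these graphs do not exit the cylinder through the top or bottom, so that for a.e.\ $x'\in D$ the vertical slice of $\Eh_t$ is $\{x_n<u_t(x')\}$ (and similarly for $\Eh_{t-h}$), possibly after relabelling which side is the interior.

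\medskip

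First I would rewrite the left-hand side of \eqref{e:volume} by Fubini. The volume term becomes
\[
\int_{\bC}\big(\chi_{\Eh_t}-\chi_{\Eh_{t-h}}\big)\,dx=\int_D\big(u_t(x')-u_{t-h}(x')\big)\,dx',
\]
up to an error controlled by the sign discussion above, which is itself dominated by the right-hand side once $\omega(h)$ is small. For the surface term, parametrising $\de\Eh_t\cap\bC$ by the graph of $u_t$ gives $d\Ha^{n-1}=\sqrt{1+|\nabla u_t|^2}\,dx'=(1+O(\omega(h)^2))\,dx'$, so
\[
\int_{\de\Eh_t\cap\bC}\sd_{\Eh_{t-h}}\,d\Ha^{n-1}=\int_D\sd_{\Eh_{t-h}}(x',u_t(x'))\,\big(1+O(\omega(h)^2)\big)\,dx'.
\]
Now the key geometric fact: since $\de\Eh_{t-h}$ is the graph of $u_{t-h}$ with $|\nabla u_{t-h}|\le\omega(h)$, the signed distance from a point $(x',u_t(x'))$ to $\Eh_{t-h}$ differs from the vertical height difference $u_t(x')-u_{t-h}(x')$ only by a factor $1+O(\omega(h)^2)$ (the nearest point on the graph is reached along a nearly vertical segment), i.e.
\[
\sd_{\Eh_{t-h}}(x',u_t(x'))=\big(u_t(x')-u_{t-h}(x')\big)\,\big(1+O(\omega(h)^2)\big).
\]
Subtracting the two integral expressions, all the main terms $\int_D(u_t-u_{t-h})\,dx'$ cancel and what remains is bounded by $C\,\omega(h)\int_D|u_t-u_{t-h}|\,dx'$, which in turn equals $C\,\omega(h)\int_{\bC}|\chi_{\Eh_t}-\chi_{\Eh_{t-h}}|\,dx$ by Fubini again (this last identity being exact, not just up to $\omega(h)$). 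This yields \eqref{e:volume}.

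\medskip

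The main obstacle, and the point requiring care, is the passage from the varifold/measure-theoretic description of $\de\Eh_t$ and $\de\Eh_{t-h}$ provided by Lemma~\ref{lem:rosticini} to the honest graph representation over a fixed disc $D$ inside the cylinder, uniformly in $h$. One must check that the $C^{1,\kappa}$ estimate in $B_1$ after rescaling, combined with the fact that $x_0\in\de\Eh_t$ lies on the axis, forces the (rescaled) boundary to be a full graph over the $(n-1)$-disc of radius $1/2$ and not to fold back or leave the cylinder — this is where the quantitative smallness $\omega(h)\to0$ of the normal tilt is used decisively, together with the uniform density estimates of Corollary~\ref{c:density} to exclude spurious extra sheets of $\de\Eh_t$ crossing $\bC$. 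A secondary technical point is the sign bookkeeping of $\sd_{\Eh_{t-h}}$: on the portion of $D$ where $u_t>u_{t-h}$ one is outside $\Eh_{t-h}$ and the signed distance is positive, and conversely; since $u_t-u_{t-h}$ carries exactly the same sign, the identification $\sd_{\Eh_{t-h}}(x',u_t(x'))\approx u_t(x')-u_{t-h}(x')$ is sign-consistent throughout, so no absolute values are lost. Everything else is the routine Fubini and Taylor-expansion bookkeeping sketched above.
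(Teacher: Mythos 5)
Your argument is correct and follows essentially the same route as the paper's proof: represent $\de\Eh_t$ and $\de\Eh_{t-h}$ as $C^{1,\kappa}$ graphs over the disc of radius $h^\beta/2$ via Lemma~\ref{lem:rosticini}, apply Fubini to the volume terms, and compare $\sd_{\Eh_{t-h}}(x',u_t(x'))\sqrt{1+|\nabla u_t|^2}$ with the vertical height difference $u_t-u_{t-h}$ up to a relative error of order $\omega(h)$. The only cosmetic difference is that you split the area-element and signed-distance corrections into two separate $1+O(\omega(h)^2)$ factors, whereas the paper bundles them into a single inequality with error $C\,\omega(h)\,|f_t-f_{t-h}|$; both yield \eqref{e:volume}.
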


\begin{proof}
From Lemma~\ref{lem:rosticini} we know that, for $h$ sufficiently small,
$\de \Eh_t$ and $\de \Eh_{t-h}$ in $\bC_{\sfrac{h^\beta}{2}}(x_0,\nu)$ can both be
written as graphs of functions of class $C^{1,\kappa}$.
Namely, by an affine change of coordinates we can assume without
loss of generality that $x_0 =0$ and $\nu = e_n$, and for simplicity
we set $\bC := \bC_{\sfrac{h^\beta}{2}}(0, e_n)$.
With this assumption we then have that for $s=t, t-h$
\[
\de \Eh_s \cap \bC= \left\{(y, f_s(y)) \in \R^{n-1}\times \R :~|y|\leq
\sfrac{h^{\beta}}{2}\right\},
\]
where $f_s:B_{\sfrac{h^\beta}{2}}\subset \R^{n-1} \to \R$ are $C^{1,\kappa}$ functions
with 
\begin{equation}\label{e:gradient}
\|\nabla f_s\|_{L^\infty(B_{\sfrac{h^\beta}{2}})} \leq \omega(h).
\end{equation}

In view of Fubini's theorem it is then clear that 
\begin{gather*}
\int_{\bC} \big(\chi_{\Eh_t}-\chi_{\Eh_{t-h}}\big)\,dx
= \int_{B_{\sfrac{h^\beta}{2}}} \big(f_t(y) - f_{t-h}(y)\big)\,dy,\\
\int_{\bC} \big\vert\chi_{\Eh_t}-\chi_{\Eh_{t-h}}\big\vert\,dx
= \int_{B_{\sfrac{h^\beta}{2}}} \big\vert f_t(y) - f_{t-h}(y)\big\vert\,dy.
\end{gather*}
Moreover, from \eqref{e:gradient} it follows that there exists a geometric
constant $C>0$ such that, for every $y \in B_{\sfrac{h^\beta}{2}}$,
\[
\big\vert \sd_{\Eh_{t-h}}(y,f_t(y))
 \sqrt{1+|\nabla f_t(y)|^2}
- (f_t(y) - f_{t-h}(y))\big\vert \leq C\, \omega(h)\,|f_t(y) - f_{t-h}(y)|.
\]
Therefore, one infers \eqref{e:volume} as follows
\begin{align*}
&\left\vert\int_{\de \Eh_t \cap \bC}\sd_{\Eh_{t-h}} \, d\Ha^{n-1}
- \int_{B_{\sfrac{h^\beta}{2}}} (f_t(y) - f_{t-h}(y))\,dy \right\vert
 \\
& \quad= \left\vert \int_{B_{\sfrac{h^\beta}{2}}} \Big(\sd_{\Eh_{t-h}}(y,f_t(y))
 \sqrt{1+|\nabla f_t(y)|^2}
-  (f_t(y) - f_{t-h}(y))\Big)\,dy \right\vert\\
& \quad \leq C\,\omega(h)\int_{B_{\sfrac{h^\beta}{2}}} |f_t - f_{t-h}|\,dy,
\end{align*}
where we used that \eqref{e:gradient}.
\end{proof}

We are finally ready for the proof of Proposition~\ref{p:time derivative}.

\subsection{Proof of Proposition~\ref{p:time derivative}}
We fix any time $t \in [2h, +\infty)$. For every $x_0 \in  \de \Eh_t$, we
fix $\alpha \in  \big(\frac12, \frac{n+2}{2(n+1)}\big)$ and consider the 
following open set $A_{x_0}$ defined as follows:
\begin{itemize}
\item[(i)] if \eqref{e:small curvature} holds,
then we set
$A_{x_0} := \bC_{\sfrac{h^\beta}{2}}(x_0, \nu)$ where $\nu\in \R^n$ is the unit
vector in Lemma~\ref{lem:rosticini};

\item[(ii)] otherwiese we set $A_{x_0} := B_{\gamma_n \sqrt{h}}(x_0)$.
\end{itemize}

Note that by Proposition~\ref{p:Linfty} we have that $\{A_{x_0}\}_{x_0 \in \de \Eh_t}$
is a covering of $\Eh_t \Delta \Eh_{t-h}$. Moreover, by a simple application of 
Besicovitch's covering theorem, cp.~\cite[Ch.\ 1.5.2]{EvansGariepy} (applied, for example, to
the balls to $B_{\sfrac{h^\beta}{2}}(x_0) \subset A_{x_0}$), there exists a finite
collections of points $I \subset \de \Eh_t$ such that $\{A_{x_0}\}_{x_0 \in I}$ is a 
covering of $\Eh_t \Delta \Eh_{t-h}$.

We estimate the contribution of the integrals in \eqref{e:time derivative} in every $A_{x_0}$
with $x_0 \in I$ in two steps, depending on whether (i) above applies or (ii).

\medskip

\textit{Estimate in case (i).} We use Corollary~\ref{c:volume} and deduce that
\begin{align}\label{e:estimate good set}
&\Big\vert \int_{A_{x_0}} \big(\chi_{\Eh_t} - \chi_{\Eh_{t-h}} \big) \, \phi\,dx - \int_{\de \Eh_t \cap A_{x_0}} 
\sd_{\Eh_{t-h}}\, \phi\, d\Ha^{n-1} \Big\vert \notag\\
&\quad \leq |\phi(x_0)| \Big\vert \int_{A_{x_0}} \big(\chi_{\Eh_t} - \chi_{\Eh_{t-h}} \big) \,dx - \int_{\de 
\Eh_t \cap A_{x_0}} \sd_{\Eh_{t-h}}\, d\Ha^{n-1} \Big\vert \notag\\
&\quad + \Big\vert \int_{A_{x_0}} \hspace{-0.2cm}\big(\chi_{\Eh_t} - \chi_{\Eh_{t-h}} \big) \, (\phi - \phi(x_0))\,dx - 
\int_{\de 
\Eh_t \cap A_{x_0}} \hspace{-0.2cm}
\sd_{\Eh_{t-h}}\, (\phi - \phi(x_0))\, d\Ha^{n-1} \Big\vert\notag\\
&\quad \leq C\,\big(\omega(h)\, \|\phi\|_{L^\infty} + h^\beta\, \|\nabla \phi\|_{L^\infty}\big) 
\int_{A_{x_0}} \big\vert\chi_{\Eh_t} - \chi_{\Eh_{t-h}} \big\vert\,d\Ha^{n-1},
\end{align}
where we used the fact that $A_{x_0} = \bC_{\sfrac{h^\beta}{2}}(x_0, \nu)$.

\medskip

\textit{Estimate in case (ii).} By assumption there exists a point 
$y_0 \in B_{\gamma_n\sqrt{h}}(x_0)\cap (\Eh_t\Delta\Eh_{t-h})$
such that $|v^{(h)}(t,y_0)|>h^{\alpha -1}$.
Without loss of generality we can assume that $y_0 \in \Eh_t$ (the other case can
be treated analogously and we leave the details to the reader).
It is then clear that $B_{\sfrac{h^{\alpha}}{2}}(y_0) \subset \R^n\setminus\Eh_{t-h}$
and $v^{(h)}(t,y)>\sfrac{h^{\alpha -1}}{2}$ for every $y 
\in B_{\sfrac{h^{\alpha}}{2}}(y_0)$.
Since $h^{\alpha} < 2\gamma_n  h^{\sfrac12}$, we can apply the density 
estimate in \eqref{e:density vol} and deduce that
\begin{gather} \label{e:from below}
C_n\,h^{(n+1)\alpha -1} \leq 
\int_{B_{\sfrac{h^{\alpha}}{2}}(y_0) \cap (\Eh_t \Delta \Eh_{t-h})} |v^{h}|\, dx.
\end{gather}
Similarly, by the density estimate in \eqref{e:density area} and Proposition~\ref{p:Linfty}
we deduce that
\begin{equation}\label{e:from above}
\int_{B_{\gamma_n\sqrt{h}}(x_0) \cap  \de \Eh_t } |\d_{E_{t-h}^{h}}|\, d\Ha^{n-1} \leq C_n\, h^{\sfrac{n}{2}}.
\end{equation}
From \eqref{e:from below}, \eqref{e:from above} and
$B_{\sfrac{h^{\alpha}}{2}}(y_0) \subset B_{2\gamma_n \sqrt{h}}(x_0)$ we then deduce that
\begin{align}\label{e:estimate bad set}
\int_{A_{x_0}} \big\vert \chi_{\Eh_t} - &\chi_{\Eh_{t-h}} \big\vert
+\int_{A_{x_0} \cap \de \Eh_{t}} \d_{\Eh_{t-h}} \,d\Ha^{n-1}\notag\\
&\leq C_n h^{\sfrac{n}{2} - (n+1)\alpha+1}
\int_{B_{2\,\gamma_n \sqrt{h}}(x_0) \cap (\Eh_t \Delta \Eh_{t-h})} |v^{h}|\, dx.
\end{align}

\medskip

We can then sum \eqref{e:estimate good set} and \eqref{e:estimate bad set} over $x_0 \in I$
and, recalling \eqref{10}, \eqref{e:per decreasing} and \eqref{e:volume converging},
we get
\begin{align}
&\Big\vert \int \big(\chi_{\Eh_t} - \chi_{\Eh_{t-h}} \big) \, \phi\,dx - \int_{\de \Eh_t} 
\sd_{\Eh_{t-h}}\, \phi\, d\Ha^{n-1} \Big\vert \notag\\
& \quad \leq \sum_{x_0\in I}\Big\vert \int_{A_{x_0}} \big(\chi_{\Eh_t} - \chi_{\Eh_{t-h}} \big) \, \phi\,dx - \int_{\de 
\Eh_t \cap A_{x_0}} 
\sd_{\Eh_{t-h}}\, \phi\, d\Ha^{n-1} \Big\vert\notag\\
& \quad \leq C_{n,0} \Big(\omega(h)\, \|\phi\|_{L^\infty} + h^\beta\, \|\nabla \phi\|_{L^\infty}
+ h^{\sfrac{n}{2} - (n+1)\alpha +1} \, \|\phi\|_{L^\infty}\Big) \notag\\
& \qquad \cdot \left(|\Eh_t \Delta \Eh_{t-h}| + \int_{\Eh_t \Delta \Eh_{t-h}} |v^{h}|\, dx\right) 
\end{align}
where we used the finite finiteness of the covering.

\medskip

Finally, integrating in time and using \eqref{e:extra1} and \eqref{e:extra2} we get
\begin{align*}
&\Big\vert\int_{2h}^{+\infty} \frac{1}{h}
\Big[\int_{\Eh_t}\phi\,dx-\int_{\Eh_{t-h}}\phi\,dx\Big]dt
-\int_h^{+\infty}\int_{\partial\Eh_t}\phi \,v_{h}\,d\Ha^{n-1}dt\Big\vert\notag\\
&\quad\leq C_{n,0,T} \Big(\omega(h)\, \|\phi\|_{L^\infty} + h^\beta\, \|\nabla \phi\|_{L^\infty}
+ h^{\sfrac{n}{2} - (n+1)\alpha +1} \, \|\phi\|_{L^\infty}\Big),
\end{align*}
where $T>0$ is such that $\textup{supp}(\phi) \subset [0,T] \times \R^n$.
Recalling the definition of $\alpha$ and taking the limit as $h$ goes to $0$, we conclude \eqref{e:time derivative}.

%
%
\appendix

\section{A density lemma}\label{s:a}
We premise the following density estimate for one-sided minimizers of the 
perimeter. The estimate can be easily deduced from the original arguments by De 
Giorgi exploited for minimizers \cite{DG}.

\begin{lemma}\label{l.one side}
There exists a dimensional constant $c_n>0$ with this property.
Let $E\subset \R^n$ be a set of finite perimeter, $R, \mu>0$ and 
$x_0\in \de E$ be such that
\begin{equation}\label{e.onesided}
\Per(E) \leq \Per (E\cup B_r(x_0)) + \mu \,|B_r(x_0) \setminus E|
\quad \forall \; 0<r < R.
\end{equation}
Then,
\begin{equation}\label{e.density3}
c_n\,r^n\leq |B_r(x_0)\setminus E| \quad \forall \; 0< r < 
\min\left\{R,\mu^{-1}\right\}.
\end{equation}
\end{lemma}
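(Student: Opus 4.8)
The result is the classical De Giorgi density estimate for one-sided minimizers, so I would only outline it, reducing the minimality hypothesis to a differential inequality for the volume function and then integrating. First I would fix the good representative of $E$, i.e. the one for which $\de E=\{x:0<|E\cap B_\rho(x)|<\omega_n\rho^n\text{ for every }\rho>0\}$; with this normalization the hypothesis $x_0\in\de E$ forces $V(r):=|B_r(x_0)\setminus E|>0$ for every $r\in(0,R)$. By Fubini in polar coordinates $V$ is locally Lipschitz on $(0,R)$ with $V'(r)=\Ha^{n-1}(\de B_r(x_0)\setminus E)$ for a.e.\ $r$, and for a.e.\ $r$ (those with $\Ha^{n-1}(\de^*E\cap\de B_r(x_0))=0$) one has the standard slicing identities
\[
\Per(E)=\Per(E;B_r(x_0))+\Per(E;\R^n\setminus\overline{B_r(x_0)}),
\]
\[
\Per(E\cup B_r(x_0))=\Ha^{n-1}(\de B_r(x_0)\setminus E)+\Per(E;\R^n\setminus\overline{B_r(x_0)}),
\]
\[
\Per(B_r(x_0)\setminus E)=\Per(E;B_r(x_0))+\Ha^{n-1}(\de B_r(x_0)\setminus E).
\]

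Next, for a.e.\ $r\in(0,R)$ I would insert the first two identities into the one-sided minimality \eqref{e.onesided}; the common term $\Per(E;\R^n\setminus\overline{B_r(x_0)})$ cancels and leaves $\Per(E;B_r(x_0))\le V'(r)+\mu V(r)$. Combining this with the third identity gives $\Per(B_r(x_0)\setminus E)\le 2V'(r)+\mu V(r)$, and the sharp isoperimetric inequality $\Per(B_r(x_0)\setminus E)\ge n\omega_n^{1/n}V(r)^{(n-1)/n}$ then yields
\[
n\omega_n^{1/n}V(r)^{\frac{n-1}{n}}\le 2V'(r)+\mu V(r).
\]
For $r<\mu^{-1}$ the last term can be absorbed: since $V(r)\le\omega_n r^n$ we have $\mu V(r)\le\mu\omega_n^{1/n}r\,V(r)^{(n-1)/n}\le\omega_n^{1/n}V(r)^{(n-1)/n}$, so that $(n-1)\omega_n^{1/n}V(r)^{\frac{n-1}{n}}\le 2V'(r)$ for a.e.\ $r\in(0,\min\{R,\mu^{-1}\})$.

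Finally I would integrate. Because $V>0$ on $(0,R)$, the function $r\mapsto V(r)^{1/n}$ is locally Lipschitz there and $\frac{d}{dr}V(r)^{1/n}=\tfrac1n V'(r)V(r)^{-(n-1)/n}\ge\tfrac{(n-1)\omega_n^{1/n}}{2n}$ for a.e.\ $r$. Integrating over $(\eps,r)$ and letting $\eps\downarrow0$ (using $V(\eps)\le\omega_n\eps^n\to0$) gives $V(r)^{1/n}\ge\tfrac{(n-1)\omega_n^{1/n}}{2n}\,r$, i.e.
\[
|B_r(x_0)\setminus E|\ge c_n\,r^n,\qquad c_n:=\Big(\tfrac{n-1}{2n}\Big)^{n}\omega_n ,
\]
first for a.e.\ $r\in(0,\min\{R,\mu^{-1}\})$ and then for every such $r$ by continuity of both sides.

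There is no genuine obstacle here; the estimate is textbook. The only points that require care, and which I would spell out, are the measure-theoretic slicing identities above — valid only for a.e.\ $r$, and relying on $\Ha^{n-1}(\de^*E\cap\de B_r(x_0))=0$ and the structure theorem for the boundary of $B_r(x_0)\setminus E$ — together with the normalization of the representative of $E$ that makes $x_0\in\de E$ yield $V(r)>0$, without which the division leading to the differential inequality would not be justified.
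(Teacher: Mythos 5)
Your proposal is correct and follows essentially the same route as the paper's proof: the same slicing identities combined with the one-sided minimality to bound $\Per(B_r(x_0)\setminus E)$ by $2V'(r)+\mu V(r)$, the isoperimetric inequality, absorption of the $\mu V$ term for $r<\mu^{-1}$ via $V(r)\le\omega_n r^n$, and integration of the resulting differential inequality. The only (harmless) differences are that you track explicit constants and absorb the $\mu$-term directly on the full range $r<\min\{R,\mu^{-1}\}$, whereas the paper first works on $r<\min\{R,C_n\mu^{-1}\}$ and then adjusts the dimensional constant.
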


Recall that $x_0 \in \de E$ if $\min\{|B_r(x_0)\setminus E|,|E\cap B_r(x_0)|\}>0$ for every $r>0$.

\begin{proof}
Without loss of generality, we can assume $x_0 = 0$.
We use the following indentity which are true a.e.~$r>0$:
\begin{gather}
\Per(E\cup B_r)=\Ha^{n-1}(\de B_r\setminus E)+\Per(E,\R^n\setminus 
B_r(x)),\label{e:identity1}\\
\Per(B_r\setminus E)=\Ha^{n-1}(\de B_r\setminus 
E)+\Per(E,B_r),\label{e:identity2}\\
\Per(E)=\Per(E,B_r)+\Per(E,\R^n\setminus B_r)\label{e:identity3}.
\end{gather}
Indeed, if $E$ were smooth, these formulas follow for all the $r$ such that 
$B_r$ and $E$ have transversal intersections. Otherwise one can argue by 
approximation.
Using now \eqref{e.onesided}, we deduce that, for a.e.~$r>0$,
\begin{align}\label{e.half}
\Per(B_r\setminus E) & \stackrel{\eqref{e:identity2}}{=}
\Ha^{n-1}(\de B_r\setminus E)+\Per(E,B_r)\notag\\
&\stackrel{\eqref{e:identity3}}{\leq}
\Ha^{n-1}(\de B_r\setminus E)+\Per(E) - \Per(E,\R^n\setminus B_r)
\notag\\
&\stackrel{\eqref{e.onesided}\,\&\, \eqref{e:identity1}}{\leq}
2\,\Ha^{n-1}(\de B_r\setminus E) +\mu \,|B_r\setminus E|.
\end{align}
By the isoperimetric inequality \cite[Corollary~1.29]{Giusti}, there exists a 
dimensional constant $C>0$, such that
\begin{equation}\label{e.dif ineq}
C\,|B_r\setminus E|^{\frac{n-1}{n}}\leq \Per(B_r\setminus E) 
\stackrel{\eqref{e.half}}{\leq}  2\,\Ha^{n-1}(\de B_r\setminus E)
+ \mu \,|B_r \setminus E|.
\end{equation}

Setting $f(r):=|B_r\setminus E|$, by the coarea formula \cite[3.4.4]{EvansGariepy}, it 
holds
\[
\Ha^{n-1}(\de B_r\setminus E)=f'(r) \quad \text{for a.e. } r>0.
\]
Hence, \eqref{e.dif ineq} reads as
\begin{equation}\label{e:diff ineq}
C\,f(r)^{\frac{n-1}{n}}\leq 2\, f'(r) + \mu \, f(r).
\end{equation}
Finally, note that $f(r) \leq \omega_n\,r^n$, from which
$f(r) \leq \omega_n^{\sfrac{1}{n}} r\,f(r)^{\sfrac{n-1}{n}}$.
Therefore, there exists a dimensional constant $C_n>0$ such that
if $0< r < \min\{R, C_n\,\mu^{-1}\}$, then the last term in \eqref{e:diff ineq}
can be absorbed in the left hand side and deduce that
\[
f(r)^{\frac{n-1}{n}}\leq C\, f'(r).
\]
Integrating \eqref{e.dif ineq} (recall that $f(r)>0$ for every $r>0$)
we get the desired \eqref{e.density3}
for every $0< r < \min\{R, C_n\,\mu^{-1}\}$ and, by changing the
dimensional constant $c_n>0$, for every $0< r < \min\{R, \mu^{-1}\}$.
\end{proof}

%
%

\bibliographystyle{abbrv}
\bibliography{ms_lit}

\end{document}